\def\Xint#1{\mathchoice
{\XXint\displaystyle\textstyle{#1}}%
{\XXint\textstyle\scriptstyle{#1}}%
{\XXint\scriptstyle\scriptscriptstyle{#1}}%
{\XXint\scriptscriptstyle\scriptscriptstyle{#1}}%
\!\int}
\def\XXint#1#2#3{{\setbox0=\hbox{$#1{#2#3}{\int}$ }
\vcenter{\hbox{$#2#3$ }}\kern-.6\wd0}}
\def\dashint{\Xint-}
\numberwithin{equation}{section}
\theoremstyle{plain}
\newtheorem{thm}{Theorem}[section]
\newtheorem{lemma}[thm]{Lemma}
\newtheorem{cor}[thm]{Corollary}
\newtheorem{prop}[thm]{Proposition}
\newtheorem{defn}[thm]{Definition}
\theoremstyle{remark}
\newtheorem{remark}{Remark}
\begin{document}

\author{Rodrigo Duarte} 
\author{Jorge Drumond Silva}
\address{Center for Mathematical Analysis, Geometry and Dynamical Systems,
	\newline
	Department of Mathematics, Instituto Superior T\'ecnico, Universidade de Lisboa
	\newline
	Av. Rovisco Pais, 1049-001 Lisboa, Portugal.}
\email{rodrigolealduarte@tecnico.ulisboa.pt}
\email{jsilva@math.tecnico.ulisboa.pt}

\thanks{R. Duarte was partially supported by FCT/Portugal through UID/MAT/04459/2020 and scholarship PD/BD/150338/2019.} 
\thanks{J. Drumond Silva was partially supported by FCT/Portugal through UID/MAT/04459/2020 and grant PTDC/MAT-PUR/1788/2020.}

\title{Weighted Gagliardo-Nirenberg Interpolation Inequalities}

\subjclass[2020]{Primary 42B37}

\keywords{Gagliardo-Nirenberg inequality, Weighted inequalities, Sparse operators.}

\begin{abstract}

In this paper, we prove weighted versions of the Gagliardo-Nirenberg interpolation inequality with Riesz as well as Bessel type fractional derivatives, generalizing the celebrated result with classical derivatives. We use a harmonic analysis approach employing several methods, including the method of 
domination by sparse operators, to obtain such inequalities for a general class of weights satisfying
Muckenhoupt-type conditions. We also obtain improved results for some particular families of weights, including 
power-law weights $|x|^\alpha$. In particular, we prove an inequality which generalizes both the Stein-Weiss 
inequality and the Caffarelli-Kohn-Nirenberg inequality. However, our approach is sufficiently flexible to allow as well
for non-homogeneous weights and we also prove versions of the inequalities with Japanese bracket weights  $\langle x\rangle^\alpha=(1+|x|^2)^{\frac{\alpha}{2}}.$

\end{abstract}

\maketitle

\tableofcontents

\newcommand{\N}{\mathbb{N}}
\newcommand{\C}{\mathbb{C}}
\newcommand{\R}{\mathbb{R}}
\newcommand{\Z}{\mathbb{Z}}
\newcommand{\Q}{\mathbb{Q}}
\newcommand{\Primes}{\mathbb{P}}
\newcommand{\csigma}{\mathfrak{S}}
\newcommand{\1}{\mathbbm{1}}
\newcommand{\bb}[1]{\mathbb{#1}}
\newcommand{\mcal}[1]{\mathcal{#1}}
\newcommand{\T}{\mathbb{T}}
\newcommand{\A}{\mathbb{A}}
\newcommand{\di}{\text{div}}
\newcommand{\supp}{\text{supp}}
\newcommand{\I}{\mathcal{I}}
\newcommand{\D}{\mathcal{D}}
\newcommand{\Des}{\text{Des}}
\renewcommand{\S}{\mcal{S}}

\section{Introduction}

The Gagliardo-Nirenberg interpolation inequality is a fundamental result in the theory of Sobolev spaces, of extreme importance and usefulness
in partial differential equations and calculus of variations. It generalizes the classical Sobolev inequality, which is just one particular case of the Gagliardo-Nirenberg inequality at one of the end points of the range of values for the interpolation parameter.

First published in 1959, independently by Emilio Gagliardo \cite{Gagli} and Louis Nirenberg \cite{Nirenb}, with precursors in the works
of Hadamard, Nagy and Ladyzhenskaya, it has, due to its relevance,
become a significant object of research, with several variants and generalizations considered, in terms of domains, spaces of functions, etc. For a historic overview see \cite{Fiorenza} and references therein. 

The classical version of this inequality in $\R^d$ can be stated as follows.
\begin{thm}\label{thm1.1}
Let \(1\leq p,q,r\leq \infty\), $m\in \N_0$, $k\in \N$ and $\frac{m}{k}\leq \theta\leq 1$, be such that the following condition
\[
\frac{1}{r}-\frac{m}{d}=\theta\left(\frac{1}{p}-\frac{k}{d}\right)+(1-\theta)\frac{1}{q},
\]
holds. Then, we have that 
\[
\|\nabla^mf\|_{L^r(\R^d)}\lesssim_{m,k,d,p,q,\theta}\|\nabla^kf\|_{L^p(\R^d)}^\theta \|f\|_{L^q(\R^d)}^{1-\theta},\ \forall f\in L^q(\R^d)\cap \dot{W}^{k,p}(\R^d),
\]
with the exceptions that one must have $\theta<1$ if $r=\infty$ and $1<p<\infty$, and that if $q=\infty$, $k<d/p$ and $m=0$ then $f$ is assumed to
vanish at infinity.

\end{thm}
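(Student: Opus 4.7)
The exponent relation is the scaling condition: replacing $f(x)$ by $f(\lambda x)$ makes both sides of the inequality scale as powers of $\lambda$, and equating those powers forces exactly the stated identity. So the problem is scale invariant and one may use scaling freely in the proof.

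The plan is to combine a Littlewood-Paley decomposition with Bernstein's inequality. Writing $f=P_{\le 0}f+\sum_{j\ge 1}P_j f$ with $P_j$ the projection onto frequencies $|\xi|\sim 2^j$, Bernstein yields, for $q,p\le r$,
\[
\|\nabla^m P_j f\|_{L^r}\lesssim 2^{jm}\min\!\bigl(2^{jd(1/q-1/r)}\|P_j f\|_{L^q},\ 2^{-j(k-d(1/p-1/r))}\|\nabla^k P_j f\|_{L^p}\bigr).
\]
Choose a dyadic cutoff $N=2^J$, bound the low frequencies $j\le J$ by the first estimate (producing $N^a\,\|f\|_{L^q}$) and the high frequencies $j>J$ by the second (producing $N^{-b}\,\|\nabla^k f\|_{L^p}$). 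A short calculation shows that the exponent identity of the theorem is precisely what makes $\theta=a/(a+b)$, so optimizing $N$ produces $\|\nabla^m f\|_{L^r}\lesssim\|f\|_{L^q}^{1-\theta}\|\nabla^k f\|_{L^p}^\theta$. The constraint $m/k\le\theta\le 1$ corresponds exactly to $a,b\ge 0$, so that each dyadic sum is geometric in the correct direction; outside this range the split breaks down and, in fact, counterexamples exist.

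The main obstacle is the endpoint behaviour. When $r=\infty$, the sum $\sum_j\|P_j f\|_{L^\infty}$ requires a strict geometric gain in $j$, which is lost precisely when $\theta=1$ and $1<p<\infty$ (this is the borderline embedding $\dot{W}^{k,p}\not\hookrightarrow L^\infty$), forcing the stated exclusion. When $q=\infty$ and $m=0$, the low-frequency piece $P_{\le 0}f$ cannot be controlled by $\|f\|_{L^\infty}$ alone unless $f$ vanishes at infinity, since otherwise non-zero constants are a counterexample. The corner cases $p,q\in\{1,\infty\}$ fall outside standard Littlewood-Paley boundedness and must be handled separately, either by representing $\nabla^m$ in terms of Riesz potentials acting on $\nabla^k$ and applying the Hardy-Littlewood-Sobolev inequality, or by Nirenberg's original argument via integration by parts on finite differences.
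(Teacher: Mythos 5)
The paper itself does not prove Theorem~\ref{thm1.1}; it states it and defers to \cite{Fiorenza} and \cite{Leoni}, which follow Nirenberg's elementary finite-difference/integration-by-parts method. Your Littlewood--Paley plus Bernstein strategy is therefore a genuinely different route, but it contains a concrete error and leaves a real case uncovered. With your notation for the two Bernstein exponents, the exponent identity of the theorem gives
\[
a \;=\; m + d\Bigl(\tfrac{1}{q}-\tfrac{1}{r}\Bigr) \;=\; \theta\Bigl(k-\tfrac{d}{p}+\tfrac{d}{q}\Bigr),
\qquad
b \;=\; k-m-d\Bigl(\tfrac{1}{p}-\tfrac{1}{r}\Bigr) \;=\; (1-\theta)\Bigl(k-\tfrac{d}{p}+\tfrac{d}{q}\Bigr),
\]
so \(a,b\ge 0\) is equivalent to \(0\le\theta\le 1\) (when \(k-d/p+d/q\ge 0\)), \emph{not} to \(m/k\le\theta\le 1\) as you assert. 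The hypothesis \(\theta\ge m/k\) is instead encoded in the Bernstein applicability conditions that you state (``for \(q,p\le r\)'') and then stop tracking: \(q\le r\) is equivalent to \(a\ge m\), \(p\le r\) is equivalent to \(b\le k-m\), and these two jointly imply, and are strictly stronger than, \(\theta\ge m/k\). This matters because the theorem admits parameter choices with \(m/k\le\theta\le 1\) but \(r<\max(p,q)\) — e.g.\ \(d=10\), \(m=0\), \(k=1\), \(p=10\), \(q=2\), \(\theta=1/2\) force \(r=4<p\). For such parameters the high-frequency Bernstein step \(L^p\to L^r\) that you rely on is false (frequency-localised pieces are not compactly supported, so there is no reverse Bernstein), and these cases are not among the \(p,q\in\{1,\infty\}\) corners that you earmark for separate treatment.

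A repair that also matches how the present paper structures the weighted version is to prove only the two endpoints and then chain them through the intermediate exponent \(1/a^*=\theta/p+(1-\theta)/q\): a Sobolev embedding \(\|\nabla^m f\|_{L^r}\lesssim\|D^{\theta k}f\|_{L^{a^*}}\), which needs only \(a^*\le r\), a condition that is \emph{equivalent} to \(\theta\ge m/k\); followed by an equal-exponent interpolation \(\|D^{\theta k}f\|_{L^{a^*}}\lesssim\|\nabla^k f\|_{L^p}^{\theta}\|f\|_{L^q}^{1-\theta}\) proved by the three-lines lemma rather than by summing dyadic pieces (which, as your own calculation shows, cannot converge at the \(b=0\) endpoint). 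This chaining covers all admissible \((p,q,r)\) because it never requires \(\max(p,q)\le r\); the price is that one needs fractional derivatives at the intermediate stage, which is precisely the setup the paper adopts for the weighted Theorems \ref{thm1.5} and \ref{thm1.6}.
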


Obviously, the case $\theta=1$ corresponds to the Sobolev inequality, whereas the intermediate values of $\theta$ yield the truly interpolation cases. Actually, the complete statement of the Gagliardo-Nirenberg inequality extends to negative values of $r$, which should then be interpreted in terms of Hölder norms of classical derivatives, just like
for the Sobolev inequality when $k>d/p$. But that range of parameters will not be of interest for the results that we intend to prove here.
For a detailed proof of this classical version of the inequality see \cite{Fiorenza}, or \cite{Leoni}, for a modern textbook presentation. A classical reference, with a version of the inequality for bounded domains with smooth enough boundaries is \cite{Friedman}.

In this paper we study weighted versions of this inequality. Weighted $L^p$ norms frequently occur in partial differential equation estimates, so the need for a combination of the fundamental Gagliardo-Nirenberg inequality with weighted norms arises very naturally and is unquestionably an extremely useful result, often required as a technical
tool for applications in this field.

Weighted inequalities with power law weights were originally obtained by Caffarelli, Kohn 
and Nirenberg (\cite{Caffarelli}) who established an inequality able to generalize both 
the Gagliardo-Nirenberg inequality and Hardy's inequality. The Caffarelli-Kohn-Nirenberg inequality 
was then later improved by Chang Shou Lin, who proved the full Gagliardo-Nirenberg inequality
for integer derivatives and power law weights. We give the statement of Lin's result here for the purpose of comparison with 
some of our main results.
\begin{thm}[Lin \cite{Lin}]\label{thm1.4}
	Let \(1\leq p,q,r<\infty\), \(m\in \N_0, k\in \N\), \(\frac mk\leq \theta\leq 1\), $k-m - \frac dp \notin \N_0$ and $\alpha, \beta, \gamma \in \R$ are such that \(\alpha>-d/p, \beta>-d/q, \gamma>-d/r\) satisfying the conditions
	$$
	\frac{1}{r}-\frac{m-\gamma}{d}=\theta\left(\frac{1}{p}-\frac{k-\alpha}{d}\right)+(1-\theta)\left(\frac{1}{q}+\frac{\beta}{d}\right),$$
	and
	$$\ 0\leq \theta \alpha+(1-\theta)\beta -\gamma\leq \theta k-m.
	$$
	Then,
	\[
	\||x|^\gamma \nabla^m f\|_{L^r}\lesssim \||x|^\alpha \nabla^k f\|_{L^p}^\theta \||x|^\beta f\|_{L^q}^{1-\theta},\ \forall f\in C^\infty_c(\R^d).
	\]
\end{thm}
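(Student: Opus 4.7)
The plan is to reduce the inequality to a weighted estimate for the Riesz potential $I_{k-m}$, prove this estimate at the two extremes $\theta=1$ and $\theta=m/k$, and then interpolate in $\theta$. For the reduction, I would use the identity
\[
\nabla^m f = (\text{Riesz transforms})\, I_{k-m}\bigl((-\Delta)^{k/2} f\bigr).
\]
Under the admissibility range $-d/p<\alpha<d/p'$, the power weight $|x|^{\alpha p}$ belongs to the Muckenhoupt class $A_p$, so Riesz transforms are bounded on $L^p(|x|^{\alpha p})$ and $(-\Delta)^{k/2}f$ has the same weighted $L^p$ norm as $\nabla^k f$; analogously $f \simeq I_k((-\Delta)^{k/2} f)$ in the $|x|^{\beta q}$-weighted $L^q$ norm. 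Setting $g = (-\Delta)^{k/2} f$, it therefore suffices to prove
\[
\||x|^{\gamma} I_{k-m} g\|_{L^r} \lesssim \||x|^{\alpha} g\|_{L^p}^{\theta}\,\||x|^{\beta} I_k g\|_{L^q}^{1-\theta}.
\]

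At $\theta=1$ the homogeneity condition becomes $\tfrac1r - \tfrac1p = \tfrac{(k-m)-(\alpha-\gamma)}{d}$ with $0 \leq \alpha-\gamma \leq k-m$, which is exactly the admissibility range of the two-weight Stein--Weiss inequality for $I_{k-m}$, giving the bound directly. At $\theta=m/k$ the weight conditions collapse to the geometric mean $\gamma = \tfrac{m}{k}\alpha + \tfrac{k-m}{k}\beta$. I would treat this endpoint by splitting
\[
I_{k-m}g(x) = \int_{|x-y|<|x|/2}\!\cdots\,dy + \int_{|x-y|\geq|x|/2}\!\cdots\,dy.
\]
On the near piece $|y|\simeq|x|$, so $|x|^\gamma$ may be replaced by $|y|^\alpha$ and Stein--Weiss closes the argument; on the far piece one exploits $|x-y|^{-(d-(k-m))} \leq |x-y|^{-(d-k)}|y|^{-m}$ together with $|y|\gtrsim|x|$ to convert the $I_{k-m}$ kernel into an $I_k$ kernel against $g$, which is equivalent (via Riesz transform boundedness) to $f$; H\"older then brings in the factor $\||x|^\beta f\|_{L^q}$.

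To obtain intermediate $\theta \in (m/k,1)$, I would apply H\"older's inequality to $\int|x|^{\gamma r}|\nabla^m f|^r\,dx$, factoring the integrand as a product of the two endpoint expressions with conjugate exponents chosen so that the powers of $\nabla^k f$ and $f$ come out to $\theta$ and $1-\theta$ respectively. The scaling identity and the convexity condition $0 \leq \theta\alpha+(1-\theta)\beta - \gamma \leq \theta k-m$ guarantee that this factorisation is arithmetically consistent for every admissible $\theta$.

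The principal obstacle I expect is the $\theta=m/k$ endpoint, specifically the far piece of the splitting: converting the tail of $I_{k-m}(\nabla^k f)$ into an expression controlled by $f$ itself requires more than the naive kernel estimate, typically an integration by parts $k-m$ times together with careful bookkeeping of boundary terms produced by the power weight. The exclusion $k-m-d/p \notin \N_0$ is the standard non-resonance condition, ruling out the borderline Sobolev embedding failure for functions that mimic polynomial behaviour near the origin.
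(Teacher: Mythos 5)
The paper does not prove Theorem~\ref{thm1.4} directly---it is quoted from Lin---but it recovers a large sub-range (namely $1<p,q,r<\infty$ and $\alpha<d/p'$, $\beta<d/q'$) as a corollary of Theorem~\ref{thm1.5}, via the equivalence $\||x|^\gamma D^m f\|_{L^p}\sim\||x|^\gamma\nabla^m f\|_{L^p}$ established in Remark~\ref{remark on classical derivatives}. Your reduction to the fractional setting via Riesz transforms and $A_p$-boundedness matches that remark exactly, and your Stein--Weiss treatment of the $\theta=1$ Sobolev endpoint parallels the paper's Theorem~\ref{thm2.2} specialized to power weights. The genuine gap is at the $\theta=m/k$ endpoint. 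The far-piece kernel bound you invoke, $|x-y|^{-(d-(k-m))}\leq|x-y|^{-(d-k)}|y|^{-m}$, is equivalent to $|x-y|\geq|y|$, which the condition $|x-y|\geq|x|/2$ does not imply (take $x=10e_1$, $y=20e_1$, $m\geq 1$), and the auxiliary assertion $|y|\gtrsim|x|$ is unavailable there: the far piece contains the entire region $|y|\leq|x|/2$. More structurally, a near/far decomposition produces at best a \emph{sum} of a $\nabla^k f$-norm and an $f$-norm, not the product $\|\cdot\|^\theta\|\cdot\|^{1-\theta}$; even after fixing the kernel estimate you would still need the scaling-and-AM--GM step to convert the sum into a product, which the sketch omits.

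The paper takes an entirely different route for the minimum-exponent endpoint: it proves a two-weight $L^p$ bound for the imaginary-order operators $D^{i\tau}$ with explicit polynomial growth in $\tau$, obtained via Lerner--Ombrosi sparse domination (Theorems~\ref{thm1.2} and \ref{thm2.3}), and then runs the three lines lemma on $z\mapsto e^{z^2-1}\int D^{zs}f\,|g|^{P(z)}\,\text{sgn}(g)\,w_z\,dx$ (Theorem~\ref{thm2.4}, following Nahas--Ponce). This complex-interpolation argument directly manufactures the multiplicative right-hand side and sidesteps the boundary-term bookkeeping you anticipate from iterated integration by parts. For intermediate $\theta$ the paper does not H\"older the left-hand side but instead \emph{composes} the two endpoints: a Sobolev step brings $D^t f$ down to $D^{\theta s}f$ in an intermediate weighted norm, and the minimum-exponent inequality applied at level $\theta s$ then supplies the product (Theorem~\ref{thm2.5}). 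Your proposed H\"older factorization would have to reconcile $\nabla^k f$ appearing on both endpoint right-hand sides at different exponents and weights, which is more delicate than the ``arithmetically consistent'' matching you claim.
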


	Just as  for Theorem \ref{thm1.1}, the full statement of Lin's result in \cite{Lin} also includes the cases $r<0$, corresponding to Hölder regularity, as well as a slightly broader range of the parameters, that again we are not interested in considering here. The case $\alpha, \beta, \gamma=0$ obviously is
	the classical Gagliardo-Nirenberg inequality, while the case $k=1$, $m=0$  is  the Caffarelli-Kohn-Nirenberg inequality.

Other weighted Gagliardo-Nirenberg inequalities have been established involving different kinds of weights. For example, Duoandikoetxea and Vega, in \cite{DV}, studied conditions, as well as extremizers, for an absolutely arbitrary weight to guarantee that the corresponding weighted $L^2$  norm of a function is bounded by the product of the standard
$L^2$ norms of a function and its gradient. In some other works, different spaces were considered, like Orlicz spaces in \cite{KP} or BMO norms
in \cite{Le}.

In the particular framework of partial differential equations, there are several results in the literature about weighted versions
of the Sobolev and Gagliardo-Nirenberg inequalities adapted to this context. For example, Xavier Cabre and Xavier Ros-Oton, motivated by the study of regularity of stable solutions to reaction-diffusion problems in bounded 
domains with symmetry of double revolution (see \cite{Cabre}), established such
inequalities, with monomial weights defined on certain convex open cones of $\R^d$,
in \cite{Cabre2}.  Bonforte, Dolbeault, Muratori and Nazaret in \cite{Bonforte}
have obtained inequalities with power-law weights, motivated by the study of weighted fast diffusion 
equations. Giulio Ciraolo, Alessio Figalli and
Alberto Roncoroni were able to prove a weighted Sobolev inequality with homogeneous weights, which are not 
necessarily power-law weights or monomial weights (see \cite{Figalli}). The methods used rely on 
optimal mass transport theory, based on work by Cordero-Erausquin, Nazaret and Villani, who proved sharp 
Sobolev and Gagliardo-Nirenberg interpolation inequalities using this innovative method in \cite{Villani}.
The
inequalities obtained via this method have seen several generalizations, of which we highlight the recent work by
Zoltan Balogh, Segastiano Don and Alexandru Kristály, who were able to obtain in \cite{Balogh} a version 
of the Gagaliardo-Nirenberg inequality with different homogeneous weights on open convex cones of $\R^d$, satisfying  very specific 
concavity conditions.

 There are also results that involve 
general classes of weights which are not necessarily homogeneous. In \cite{Meyries}, for example, Meyries and Veraar obtain
Sobolev inequalities involving fractional derivatives and Muckenhoupt-type weights in the \(A_p\) class. This is also the framework that we follow in this paper, and our goal is to obtain the same type of result for the full Gagliardo-Nirenberg interpolation inequality, from a harmonic analysis point of view, specifically using the method of domination by sparse operators.

For our purposes it will be more convenient to work with Riesz fractional derivatives which are better suited to the Harmonic Analysis framework that we develop. Moreover, they generalize classical derivatives and the corresponding inequalities, in a particular sense which is explained in Remark \ref{remark on classical derivatives}, after Theorem \ref{thm1.5}. Therefore, given \(f\in \mcal{S}(\R^d)\)
and its Fourier transform
$$\hat{f}(\xi)=\int e^{-2\pi i x \cdot \xi}f(x)dx,$$
we define the operator as
\[
D^zf=(|\cdot|^z\hat{f})^\vee,
\]
which, in its fullest generality, is done by using the analytic continuation of $|\xi|^z$ as a tempered distribution valued holomorphic function of $z$, except for $z=-d-2k$, $k \in \N_0$, and interpreting this operator as a product of a tempered distribution by a Schwartz function in Fourier space, whose inverse Fourier transform corresponds then to a convolution of a tempered distribution (essentially $|x|^{-d-z}$) with $f$, thus yielding a $C^\infty$ function. But we will only be concerned with the range of parameters \(\mbox{Re}(z)>-d\), where the definition can simply be given
by the straightforward integral of the inverse Fourier transform, since in this case \(|\xi|^z\hat{f}\in L^1\).

This way our goal is to obtain inequalities of the form
\begin{equation}\label{maineq}
\|D^tf\|_{L^r(w)}\lesssim \|D^sf\|_{L^p(u)}^\theta \|f\|_{L^q(v)}^{1-\theta},
\end{equation}
where \(L^p(w)\) are weighted Lebesgue spaces with norm
\[
\|f\|_{L^p(w)}^p=\int_{\R^d}|f(x)|^pw(x)dx,
\]
with \(1\leq p<\infty\). 

We are also interested in the same inequality for the Bessel type (pseudodifferential) operator \(J^z\) defined by 
\[
J^zf=(\langle \cdot \rangle^z\hat{f})^\vee,\ f\in \mcal{S}'(\R^d),
\]
where $\langle \cdot \rangle^z=(1+|\cdot|^2)^{z/2}.$

Throughout the paper we use the Vinogradov notation
\[
A\lesssim B
\]
to mean that there is a constant \(C>0\) such that \(A\leq CB\). We sometimes use subscripts to denote the dependence of the constant on some parameters, however when this notation is used to control norms \(\|f\|_2\lesssim \|f\|_1\), it is understood that the implicit constant may depend on the parameters of the problem, but it does not depend on \(f\). Also, we consider only weight functions \(w\) that are locally integrable in \(\R^d\) and that satisfy \(0<w(x)<\infty\) for almost every \(x\in \R^d\). 

The project of obtaining weighted versions of important inequalities in harmonic analysis began in the 70s with the work of Muckenhoupt, who characterized the boundedness of the Hardy-Littlewood maximal operator in terms of the so called \(A_p\) condition (\cite{Muckenhoupt}). A weight \(w\) is said to satisfy the \(A_p\) condition, for \(1<p<\infty\), if
\[
\sup_Q\left(\dashint_Qw^{-\frac{p'}{p}}\right)^{p-1}\dashint_Q w<\infty,
\]
with $p'$ the conjugate exponent of $p$, i.e. $\frac 1p + \frac{1}{p'}=1$, and where the supremum is taken over all cubes \(Q\) with sides parallel to the coordinate axes. We use the notation
\[
\dashint_Qf=\frac{1}{|Q|}\int_Qf.
\]
In this paper we reserve the letter \(Q\) to denote cubes of the form
\[
Q=[x_1,x_1+l[\times \dots \times [x_d,x_d+l[,
\]
for some \(x=(x_1,\dots, x_d)\in \R^d\) and some side length \(l>0\). 

Soon after Muckenhoupt's result, many authors used the main ideas to prove the boundedness of several important operators in weighted spaces. See for instance \cite{CF}, \cite{HMW}, \cite{KW} and \cite{MW}. However, all of these results use only one weight. Moreover, the proofs rely on the reverse Hölder inequality which means that it is not easy to generalize these arguments to prove boundedness results between weighted spaces with different weights. Recently, a lot of progress was made in this direction by using the method of domination by sparse operators. The idea of this method is to control an operator by a so called sparse operator of the form
\[
\mcal{A}_{r,\mcal{S}}f(x)=\sum_{Q\in \mcal{S}}\chi_Q(x)\left(\dashint_Q|f|^r\right)^{1/r},
\]
where \(\mcal{S}\) is a suitably sparse family of cubes.
\begin{defn}\label{defn1.1}
We say that a collection of cubes \(\mcal{S}\) is \(\eta\)-sparse, \(0<\eta<1\), if for every \(Q\in \mcal{S}\) there is a measurable subset \(E(Q)\subseteq Q\) such that
\[
|E(Q)|\geq \eta |Q|,
\]
and moreover the sets \(\{E(Q)\}_{Q\in \mcal{S}}\) are pairwise disjoint.
\end{defn}
A surprising amount of operators can be controlled by sparse operators. The main result we will need in this paper is a recent theorem by Andrei Lerner and Sheldy Ombrosi.
\begin{thm}[Lerner, Ombrosi \cite{LernerOmbrosi}]\label{thm1.2}
Let \(T\) be a sublinear weak type \((q,q)\) operator with \(1\leq q<\infty\). Define the maximal operator \(M_T\) by
\[
M_Tf(x)=\sup_{Q\ni x}\sup_{x',x''\in Q}|T(f\chi_{\R^d\setminus Q^*})(x')-T(f\chi_{\R^d\setminus Q^*})(x'')|,
\]
where \(Q^*\) is the cube with the same center as \(Q\) but with side length \(5\sqrt{d}\) times the side length of \(Q\). Suppose that \(M_T\) is weak type \((r,r)\) with \(q\leq r<\infty\). Then, for every compactly supported function \(f\in L^r(\R^d)\) there exists a \((2\cdot 5^d\sqrt{d}^d)^{-1}\)-sparse family \(\mcal{S}=\mcal{S}(f)\) such that
\[
|Tf(x)|\lesssim_d(\|T\|_{L^q\rightarrow L^{q,\infty}}+\|M_T\|_{L^r\rightarrow L^{r,\infty}})\mcal{A}_{r,\mcal{S}}f(x)\text{ for a.e. }x\in \R^d.
\]
\end{thm}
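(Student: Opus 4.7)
The plan is to build $\mathcal{S}$ by an iterated Calder\'on--Zygmund stopping-time procedure adapted to the two hypotheses. Since $f$ is compactly supported, first fix a cube $Q_0$ large enough to contain $\supp f$; it then suffices to produce the pointwise bound on $Q_0$, since the complementary region is handled by partitioning $\R^d\setminus Q_0$ into translates of $Q_0$ and iterating the same argument outward, whose contributions are absorbed into the overall sparse family.

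The central step is a one-step decomposition lemma. Set $c_T:=\|T\|_{L^q\to L^{q,\infty}}+\|M_T\|_{L^r\to L^{r,\infty}}$ and $\langle f\rangle_{r,Q^*}:=(\dashint_{Q^*}|f|^r)^{1/r}$. For a given cube $Q$, define the exceptional set
\[
E_Q=\{x\in Q:|T(f\chi_{Q^*})(x)|>C_1 c_T\langle f\rangle_{r,Q^*}\}\cup\{x\in Q:M_Tf(x)>C_2 c_T\langle f\rangle_{r,Q^*}\}.
\]
The weak-type $(q,q)$ bound for $T$---combined with H\"older's inequality to pass from the $L^q$-norm of $f\chi_{Q^*}$ to an $L^r$-average over $Q^*$, which is legitimate since $q\le r$---together with the weak-type $(r,r)$ bound for $M_T$ allow a choice of dimensional constants $C_1,C_2$ for which $|E_Q|\le|Q|/4$. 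A Calder\'on--Zygmund stopping time on $Q$ then selects pairwise disjoint dyadic subcubes $\{P_j\}\subset\mathcal{D}(Q)$ that cover $E_Q$ up to a null set with $\sum_j|P_j|\le|Q|/2$; the good set $E(Q):=Q\setminus\bigcup_jP_j$ therefore has measure at least $|Q|/2$, and on it both $|T(f\chi_{Q^*})(x)|\le C_1 c_T\langle f\rangle_{r,Q^*}$ and $M_Tf(x)\le C_2 c_T\langle f\rangle_{r,Q^*}$ hold simultaneously.

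Iterating starting from $\mathcal{S}_0=\{Q_0\}$, with $\mathcal{S}_{k+1}$ consisting of all children produced by applying the lemma to every $Q\in\mathcal{S}_k$, the sparse family claimed in the conclusion is $\mathcal{S}:=\{Q^*:Q\in\bigcup_k\mathcal{S}_k\}$. Pairwise disjointness of the sets $E(Q)$ across the whole tree (since descendants of $Q$ are contained in $\bigcup_j P_j=Q\setminus E(Q)$) together with $|E(Q)|\ge|Q|/2=(2\cdot 5^d d^{d/2})^{-1}|Q^*|$ yield exactly the advertised $(2\cdot 5^d\sqrt{d}^d)^{-1}$-sparsity constant. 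To derive the pointwise bound, for a.e.\ $x\in Q_0$ let $Q_0\supset Q^{(1)}\supset\cdots$ be the nested chain of tree-cubes containing $x$; splitting $|Tf(x)|\le|T(f\chi_{(Q^{(k)})^*})(x)|+|T(f\chi_{\R^d\setminus(Q^{(k)})^*})(x)|$ by sublinearity at each level, controlling the local term via the one-step bound on $E(Q^{(k)})$, and controlling the tail piece via $M_Tf$ (which bounds its oscillation from one generation to the next, while its absolute value at a reference good point is inductively controlled), the contributions telescope to give
\[
|Tf(x)|\lesssim_d c_T\sum_k\langle f\rangle_{r,(Q^{(k)})^*}\chi_{Q^{(k)}}(x)\le c_T\mathcal{A}_{r,\mathcal{S}}f(x).
\]

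The principal obstacle is precisely the tail estimate: the weak-type $(q,q)$ of $T$ controls only the local piece $T(f\chi_{Q^*})$, while the far-field piece $T(f\chi_{\R^d\setminus Q^*})$ can only be handled by comparing its values across successive generations of the iteration. This is exactly what the auxiliary operator $M_T$ is designed to measure, and the second hypothesis (weak-type $(r,r)$ of $M_T$) enters here; arranging the telescoping so that these tail contributions accumulate cleanly into a sparse sum without losing the sparsity constant, and matching that constant to the dimensional factor $(5\sqrt{d})^d$ coming from the dilation $Q\mapsto Q^*$, is the technical crux of the argument.
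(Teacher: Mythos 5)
The paper does not prove this theorem; it is Theorem 1.1 of Lerner--Ombrosi, cited as a black box, so your proposal should be judged on its own. Your reconstruction correctly captures the large-scale architecture of the Lerner--Ombrosi/Lerner sparse-domination argument (initial cube enclosing the support, exceptional sets driven by the two weak-type hypotheses, Calder\'on--Zygmund stopping time, iterated recursion, and the arithmetic matching the $(2\cdot 5^d\sqrt{d}^d)^{-1}$ sparsity constant to the $Q\mapsto Q^*$ dilation).

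There are, however, genuine gaps in the one-step lemma and in the telescoping, which together constitute the real content of the theorem. First, the exceptional set should use the \emph{localized} function: you need $M_T(f\chi_{Q^*})(x)>C_2c_T\langle f\rangle_{r,Q^*}$, not $M_Tf(x)>\dots$. The weak-type $(r,r)$ bound for $M_T$ applied to $g=f\chi_{Q^*}$ gives $\|g\|_{L^r}=|Q^*|^{1/r}\langle f\rangle_{r,Q^*}$, which is exactly what makes $|E_Q|\lesssim|Q|$; with the global $f$ the estimate produces $\|f\|_{L^r(\R^d)}$, which does not scale down with $Q$ and so the iteration breaks past the top level. Second, $M_T$ is an \emph{oscillation} operator, so at a reference point $\bar{x}\in P_j\setminus E_Q$ it only controls $|T(f\chi_{Q^*\setminus P_j^*})(x)-T(f\chi_{Q^*\setminus P_j^*})(\bar{x})|$; the absolute value $|T(f\chi_{Q^*\setminus P_j^*})(\bar{x})|$ is \emph{not} one of the quantities bounded on $Q\setminus E_Q$. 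Splitting by sublinearity gives $|T(f\chi_{Q^*})(\bar{x})|+|T(f\chi_{P_j^*})(\bar{x})|$; the first piece is controlled, but the second is the local term one generation down, for which no bound is available at this stage. Your phrase ``its absolute value at a reference good point is inductively controlled'' silently asserts what actually requires the technical centerpiece of Lerner--Ombrosi's proof (an essential-infimum/median argument producing a local pointwise bound for $|T(f\chi_{Q^*})|$ before the recursion even starts). Third, a quantitative issue: with $|E_Q|\le|Q|/4$ and CZ selection at height $1/2$ you do get $\sum_j|P_j|\le|Q|/2$, but maximality only gives $\dashint_{P_j}\chi_{E_Q}\le 2^{d-1}$, which is $\ge 1$ for $d\ge 2$, so $P_j\setminus E_Q$ could be null and no good reference point $\bar{x}$ need exist. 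You must take $|E_Q|\lesssim 2^{-d}|Q|$ (by enlarging $C_1,C_2$) and lower the CZ threshold accordingly so that each selected $P_j$ has a positive-measure good subset. These are not cosmetic points; absent them the recursion does not close.
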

The method of domination by sparse operators can be used to obtain two weight inequalities, essentially because it is much easier to control weighted norms of sparse operators. Of particular importance for our purpose is the work done on two weight inequalities for the Riesz potential, or fractional integral operator \(\I_\alpha\). Recall that this operator is defined by
\[
\I_\alpha f(x)=\int_{\R^d}\frac{f(x-y)}{|y|^{d-\alpha}}dy,
\]
where \(\alpha \in ]0,d[\), which is basically the same (except for multiplicative constants) as $D^{-\alpha}f$ (see \eqref{RieszDer} below), when $f \in \mcal{S}(\R^d)$. For a very good reference on many two weight results for the Riesz potential operator see \cite{Uribe}. A fundamental condition when dealing with two weight inequalities is the natural generalization of the \(A_p\) condition, which is the \(A_{p,q}\) condition.
\begin{defn}\label{defn1.2}
Let \(1<p,q<\infty\). We say the pair \((v,w)\) satisfies the \(A_{p,q}\) condition if 
\[
[v,w]_{A_{p,q}}:=\sup_Q\left(\dashint_Qv^{-\frac{p'} {p}}\right)^{1/p'}\left(\dashint_Qw\right)^{1/q}<\infty. 
\]
\end{defn}
This is a necessary condition for the two weight Hardy-Littlewood-Sobolev inequality to hold
\[
\|\I_\alpha f\|_{L^q(w)}\lesssim \|f\|_{L^p(v)},
\]
when
$$\frac{1}{q}+\frac{\alpha}{d}=\frac{1}{p}.$$
However, unlike in the one weight case, this turns out not to be a sufficient condition. To fix this, some extra condition is needed. There are many approaches to do this. For example, one can introduce a stronger condition than the \(A_{p,q}\) condition using Orlicz norms (\cite{Uribe},\cite{Perez}). However, for our purposes it will suffice to add the assumption that the weights $v^{-p'/p}$ and  $w$ satisfy the \(A_\infty\) condition. The reason for this is that we are mainly interested in applying these inequalities for weights like \(w(x)=|x|^\gamma, \gamma > -d,\) which are always in \(A_\infty\). 

Weighted versions of the Hardy-Littlewood-Sobolev inequality will be an important ingredient in the proof of the main results of this paper (see proposition \ref{weighted hardy-littlewood-sobolev}). 
We will start by using it to obtain a weighted Sobolev inequality of the type
\[
\|D^tf\|_{L^q(w)}\lesssim \|D^sf\|_{L^p(v)},
\]
with \(1<p<q<\infty\),\, \(t<s\) and the weights $v,w$ satisfying appropriate conditions (see theorem \ref{thm2.2}).
 
 Then, using ideas from complex interpolation and the method of domination by sparse operators we will be able to prove 
 the other end point of the weighted Gagliardo-Nirenberg inequality, from which follow very general versions of inequalities of the type
\eqref{maineq}, for three different Muckenhoupt type weights (see theorem \ref{thm2.5} below). Applying these general results for specific types of weights, we can recover
 Lin's result in Theorem \ref{thm1.4}  using power law weights, not only for integer derivatives but rather for the operators \(D\) and \(J\). Indeed we will prove the following theorem.
\begin{thm}\label{thm1.5}
Suppose that \(1<p,q,r<\infty, 0 \leq t<s, \frac ts \leq \theta \leq 1, \alpha\in]-d/p,d/p'[,\beta\in]-d/q,d/q'[,\gamma>-d/r\). Assume these satisfy
$$
    \frac{1}{r}-\frac{t-\gamma}{d}=\theta\left(\frac{1}{p}-\frac{s-\alpha}{d}\right)+(1-\theta)\left(\frac{1}{q}+\frac{\beta}{d}\right)
$$
and
$$
    0\leq \theta \alpha+(1-\theta)\beta-\gamma\leq \theta s-t.
$$
Then it follows that
\[
\||x|^\gamma D^tf\|_{L^r(\R^d)}\lesssim \||x|^\alpha D^sf\|_{L^p(\R^d)}^\theta\||x|^\beta f\|_{L^q(\R^d)}^{1-\theta},\ \forall f\in \mcal{S}(\R^d).
\]
Moreover, the same inequality is valid for the operator \(J\).
\end{thm}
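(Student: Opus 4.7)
The plan is to deduce Theorem \ref{thm1.5} from the general three-weight result Theorem \ref{thm2.5} by verifying that the specific power-law choices satisfy the Muckenhoupt-type hypotheses there, and separately to handle the Bessel operator $J^t$ by comparison with the Riesz operator $D^t$.

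The first step is the Muckenhoupt verification. Recall the sharp criterion that $|x|^a$ belongs to $A_p(\R^d)$ if and only if $-d<a<d(p-1)$. The hypothesis $\alpha\in(-d/p,d/p')$ is precisely $|x|^{\alpha p}\in A_p$, and symmetrically $\beta\in(-d/q,d/q')$ gives $|x|^{\beta q}\in A_q$, while $\gamma>-d/r$ only guarantees local integrability of $|x|^{\gamma r}$ and hence $A_\infty$ membership --- which is the relaxed side-condition flagged in the introduction as sufficient in place of full $A_r$ membership for the two-weight Hardy-Littlewood-Sobolev theory.

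The second step is to translate the joint $A_{p,q}$-type and scaling hypotheses of Theorem \ref{thm2.5} into the explicit conditions of Theorem \ref{thm1.5}. The dimensional identity
\[
\frac{1}{r}-\frac{t-\gamma}{d}=\theta\left(\frac{1}{p}-\frac{s-\alpha}{d}\right)+(1-\theta)\left(\frac{1}{q}+\frac{\beta}{d}\right)
\]
is forced by testing on dilations $f(\lambda\,\cdot)$. The bilateral inequality
\[
0\leq \theta\alpha+(1-\theta)\beta-\gamma\leq \theta s-t
\]
emerges when one evaluates the supremum defining the joint $A_{p,q}$-type constant on the critical family of cubes straddling the origin: there one has $\dashint_{Q_R}|x|^a \sim R^a$ for $a>-d$, so the finiteness of the supremum collapses to these two algebraic bounds --- the upper one reproducing the standard $A_{p,q}$ condition for the underlying fractional-integral operator $D^{t-s}$ (of positive order $s-t$), the lower one enforcing non-degeneracy of the weight interpolation at the origin. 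On cubes far from the origin the three weights are essentially constant and no additional constraint appears.

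For the Bessel operator, I would split $\langle \xi\rangle^t = |\xi|^t\bigl(1+|\xi|^{-2}\bigr)^{t/2}$ via a smooth Littlewood-Paley cutoff in frequency: the high-frequency portion is $D^t$ composed with a H\"ormander--Mihlin multiplier, bounded on $L^r(|x|^{\gamma r})$ by standard weighted multiplier theory (or equivalently, via the sparse-domination machinery of Theorem \ref{thm1.2} used for $D^t$ itself), while the low-frequency remainder has a Schwartz convolution kernel and is thus directly dominated by $\|f\|_{L^q(|x|^{\beta q})}$. The main obstacle is the clean matching between the abstract $A_{p,q}$-type supremum in Theorem \ref{thm2.5} and the bilateral inequality in Theorem \ref{thm1.5}: one must verify that the lower bound $0\leq\theta\alpha+(1-\theta)\beta-\gamma$ corresponds exactly to what the supremum forces on the critical centered cubes, rather than to a strictly stronger uniform condition that would artificially shrink the admissible range of parameters.
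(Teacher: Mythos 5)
Your Riesz-case outline (Steps 1--2) takes essentially the paper's route: feed power-law weights into Theorem \ref{thm2.5}, after characterizing when $(|x|^a,|x|^b)\in A_{p,q}^\alpha$. The paper carries out that characterization carefully in Lemma \ref{lemma3.1} via a type-I/type-II ball decomposition, and the two numeric conditions of Theorem \ref{thm1.5} then appear not as a ``testing on dilations'' heuristic plus a critical-cube heuristic, but as exact consequences: the scaling equality is condition~(ii) of the lemma, the lower bound $\theta\alpha+(1-\theta)\beta-\gamma\geq 0$ is condition~(i), and the upper bound is the requirement $a\leq r$ in Theorem \ref{thm2.5} (which in turn is exactly what is needed for the $A_{a,r}^{\theta s-t}$ supremum on type-I balls to stay finite as the radius shrinks). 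You flagged the matching between the abstract supremum and the algebraic inequalities as ``the main obstacle''; the paper resolves it, and you leave it open. You also do not address why the polynomial ambiguity in Theorem \ref{thm2.2}/\ref{thm2.5} disappears for power weights (the paper invokes Lemma \ref{lemma2.4} together with $|x|^{\gamma r}(\Gamma^S_R)=+\infty$), nor the $t=0$ boundary case, which needs the inclusion $\mcal{S}(\R^d)\subseteq L^{r'}(|x|^{-\gamma r'})$ from Remark \ref{remark1} together with the bound $\gamma<d/r'$ from Remark \ref{remark on the size of gamma}.

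Your Step~3 for the operator $J$ takes a genuinely different route --- Littlewood--Paley splitting of $\langle\xi\rangle^t$ rather than the paper's direct kernel comparison $G_\alpha(x)\lesssim_{\alpha,d}|x|^{\alpha-d}$ and the explicit formula \eqref{eq4.3} for $J^{i\tau}$ --- and here there is a gap. Write $J^t=T_{\mathrm{hi}}+T_{\mathrm{lo}}$, with $T_{\mathrm{lo}}f=\psi*f$ for a Schwartz $\psi$. Even granting that $\|\psi*f\|_{L^r(|x|^{\gamma r})}\lesssim\|f\|_{L^q(|x|^{\beta q})}$ (itself a nontrivial two-weight, two-exponent claim whose verification you do not supply), this yields
\[
\|\,|x|^\gamma J^tf\|_{L^r}\;\lesssim\;\|\,|x|^\alpha J^sf\|_{L^p}^{\theta}\,\|\,|x|^\beta f\|_{L^q}^{1-\theta}\;+\;\|\,|x|^\beta f\|_{L^q},
\]
and to absorb the second term into the first you would need $\|\,|x|^\beta f\|_{L^q}\lesssim\|\,|x|^\alpha J^sf\|_{L^p}$. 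That is a weighted Sobolev embedding in the direction $(p,\alpha,s)\to(q,\beta,0)$, which the hypotheses of Theorem \ref{thm1.5} do not force: the two ``endpoints'' $\frac1p-\frac{s-\alpha}{d}$ and $\frac1q+\frac{\beta}{d}$ are unordered in the theorem. So the low-frequency piece cannot in general be folded into the Gagliardo--Nirenberg product. The paper sidesteps this entirely by proving the two endpoint estimates directly for $J$ --- Theorem \ref{thm4.last} (Sobolev for $J$, via $|J^{-\alpha}f|\lesssim\I_\alpha(|f|)$) and Theorem \ref{thm4.2} (the $J^{i\tau}$ bound with controlled constant, via the kernel $G_{i\tau}$) --- so that the same interpolation/sparse-domination scheme used for $D$ closes without any residual low-frequency term.
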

\begin{remark}\label{remark on the size of gamma}
Under the conditions of the theorem it follows that \(\gamma < d/r'\). Indeed, using the fact that \(t-\theta s \leq 0\), \(\alpha < d/p'\) and \(\beta < d/q'\) we see that
\[
\begin{split}
\gamma &= d\left(\frac{\theta}{p}+\frac{1-\theta}{q}\right) + \theta \alpha + (1-\theta)\beta + t-\theta s -\frac{d}{r}\\
&< d\left(\frac{\theta}{p}+\frac{1-\theta}{q}\right) + d\left( \frac{\theta}{p'} + \frac{1-\theta}{q'}\right) - \frac{d}{r} = \frac{d}{r'}.
\end{split}
\]
In turn, this means that all the weights making an appearance are in the \(A_p\) class of the corresponding norm. More precisely, \(|x|^{\gamma r}\in A_r\), \(|x|^{\alpha p}\in A_p\) and \(|x|^{\beta q}\in A_q\). See section \ref{Homogeneous weights} for a characterization of the range of exponents for which the power-law weights belong to \(A_p\).  
\end{remark}
\begin{remark}\label{remark on classical derivatives}
We note also that when \(s, t\) are positive integers then the norms are equivalent to the norms with classical derivatives. More precisely, if \(f\in \mcal{S}(\R^d), m\in \N_1\) and \(-d/p < \gamma < d/p'\) then
\[
\||x|^\gamma D^m f\|_{L^p(\R^d)} \sim \||x|^\gamma \nabla^m f\|_{L^p(\R^d)}.
\]
Indeed, given a multi-index \(\alpha\) with \(|\alpha|=m\), define the multiplier operator \(T_\alpha f = (\xi^\alpha |\xi|^{-m}\hat{f})^\vee\). From \cite{KW}, we know that \(T_\alpha\) is bounded on \(L^p(w)\) when \(w\in A_p\). In the present case this means that 
\[
\||x|^\gamma T_\alpha g\|_{L^p(\R^d)}\lesssim \||x|^\gamma g\|_{L^p(\R^d)}.
\]
Now, we can relate \(D^mf\) to \(\nabla^mf\) by using \(T_\alpha\). In fact, there are constants \(c_\alpha, c_\alpha '\) such that
\[
D^mf = \sum_{|\alpha|=m}c_\alpha T_\alpha (\partial^\alpha f)\text{ and conversely }\partial^\alpha f = c'_\alpha T_\alpha D^mf.
\]
Therefore,
\[
\||x|^\gamma D^mf\|_{L^p(\R^d)}\lesssim \sum_{|\alpha|=m}\||x|^\gamma T_\alpha \partial^\alpha f\|_{L^p}\lesssim \sum_{|\alpha|=m}\||x|^\gamma \partial^\alpha f\|_{L^p}.
\]
And conversely,
\[
\||x|^\gamma \nabla^mf\|_{L^p(\R^d)}\lesssim \sum_{|\alpha|=m}\||x|^\gamma T_\alpha D^mf\|_{L^p}\lesssim \||x|^\gamma D^mf\|_{L^p}.
\]
Therefore the classical inequality is indeed a particular case of theorem \ref{thm1.5}. 
\end{remark}
Our method is much more flexible than the classical methods used by Lin since they don't rely on the homogeneity of the weights. In fact we can easily prove analogous results for other types of weights. Of particular interest also are the non-homogeneous weights \(w(x)=\langle x\rangle^\gamma\).
\begin{thm}\label{thm1.6}
Suppose that \(1<p,q,r<\infty,\, 0\leq t<s,\,\, t/s \leq \theta \leq 1, \alpha\in]-d/p,d/p'[,\beta\in]-d/q,d/q'[,\gamma>-d/r\). Assume these satisfy
\begin{align*}
    \frac{1}{r}&\leq \frac{\theta}{p} + \frac{1-\theta}{q}, \\
    \gamma &\leq \theta \alpha+(1-\theta)\beta
\end{align*}
and
\[
\theta\left(\frac{1}{p}-\frac{s}{d}\right)+\frac{1-\theta}{q}\leq \frac{1}{r}-\frac{t}{d}\leq \theta\left(\frac{1}{p}-\frac{s-\alpha}{d}\right)+(1-\theta)\left(\frac{1}{q}+\frac{\beta}{d}\right)-\frac{\gamma}{d}.
\]
Then it follows that
\[
\|\langle x\rangle^\gamma D^tf\|_{L^r(\R^d)}\lesssim\|\langle x\rangle^\alpha D^sf\|_{L^p(\R^d)}^\theta\|\langle x\rangle^\beta f\|_{L^q(\R^d)}^{1-\theta},\ \forall f\in \mcal{S}(\R^d).
\]
Moreover, the same result is valid for the operator \(J\).
\end{thm}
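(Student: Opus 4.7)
The plan is to reduce Theorem~\ref{thm1.6} to the homogeneous version Theorem~\ref{thm1.5} by a spatial decomposition exploiting that $\langle x\rangle^\gamma\sim 1$ on $\{|x|\leq 1\}$ and $\langle x\rangle^\gamma\sim |x|^\gamma$ on $\{|x|>1\}$. Concretely, I would start from
\[
\|\langle x\rangle^\gamma D^t f\|_{L^r(\R^d)}^r \sim \|D^t f\|_{L^r(\{|x|\leq 1\})}^r + \||x|^\gamma D^t f\|_{L^r(\{|x|>1\})}^r
\]
and bound each summand by a suitable instance of Theorem~\ref{thm1.5}.

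For the exterior piece, set $B_0:=\theta(1/p-(s-\alpha)/d)+(1-\theta)(1/q+\beta/d)$ and define $\tilde\gamma:=dB_0+t-d/r$, so that the scaling identity of Theorem~\ref{thm1.5} is verified with $\tilde\gamma$ in place of $\gamma$. A short computation, in the spirit of Remark~\ref{remark on the size of gamma}, shows that the upper bound $1/r-t/d\leq B_0-\gamma/d$ forces $\tilde\gamma\geq\gamma$, the lower bound $\theta(1/p-s/d)+(1-\theta)/q\leq 1/r-t/d$ forces $\tilde\gamma\leq\theta\alpha+(1-\theta)\beta$, and the extra hypothesis $1/r\leq\theta/p+(1-\theta)/q$ forces $\theta\alpha+(1-\theta)\beta-\tilde\gamma\leq\theta s-t$; since also $\tilde\gamma\geq\gamma>-d/r$, all the hypotheses of Theorem~\ref{thm1.5} are met with the tilded exponent. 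Using $|x|^\gamma\leq|x|^{\tilde\gamma}$ on the exterior, Theorem~\ref{thm1.5} yields
\[
\||x|^\gamma D^t f\|_{L^r(\{|x|>1\})} \leq \||x|^{\tilde\gamma} D^t f\|_{L^r(\R^d)} \lesssim \||x|^\alpha D^s f\|_{L^p}^\theta \||x|^\beta f\|_{L^q}^{1-\theta}.
\]

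For the interior piece, set $1/r_A:=\theta(1/p-s/d)+(1-\theta)/q + t/d$. The lower bound in the hypothesis is equivalent to $r\leq r_A$, so Hölder's inequality on the bounded set $\{|x|\leq 1\}$ gives $\|D^t f\|_{L^r(\{|x|\leq 1\})}\lesssim \|D^t f\|_{L^{r_A}(\R^d)}$, and the unweighted Gagliardo-Nirenberg inequality (i.e.\ Theorem~\ref{thm1.5} with $\alpha=\beta=\gamma=0$) finishes this estimate as $\|D^t f\|_{L^{r_A}}\lesssim\|D^s f\|_{L^p}^\theta\|f\|_{L^q}^{1-\theta}$.

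\emph{The main obstacle} is the final conversion: dominating both right-hand sides by the target $\|\langle x\rangle^\alpha D^s f\|_{L^p}^\theta\|\langle x\rangle^\beta f\|_{L^q}^{1-\theta}$. When $\alpha,\beta\geq 0$ this is immediate from the pointwise bounds $1\leq\langle x\rangle^\alpha$ and $|x|^\alpha\leq\langle x\rangle^\alpha$ (and analogously for $\beta$), completing the elementary proof in that regime. When $\alpha$ or $\beta$ is negative these bounds reverse, because near the origin the power-law weight is strictly stronger than the Japanese bracket weight; to handle that regime I would instead invoke the general three-weight sparse-domination inequality alluded to in the introduction (Theorem~\ref{thm2.5}) directly with $u=\langle x\rangle^{\alpha p}$, $v=\langle x\rangle^{\beta q}$, $w=\langle x\rangle^{\gamma r}$, after verifying the relevant Muckenhoupt-type conditions. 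These reduce to the standard fact that $\langle x\rangle^c\in A_p$ iff $c\in\,]-d,d(p-1)[\,$, which in the present parameters is exactly the given ranges of $\alpha,\beta,\gamma$. The Bessel operator case follows by the same argument, since Theorem~\ref{thm1.5} is stated for both $D$ and $J$.
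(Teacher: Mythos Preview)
Your spatial decomposition into $\{|x|\leq 1\}$ and $\{|x|>1\}$ and the reduction to Theorem~\ref{thm1.5} is genuinely different from the paper's route, and the exterior computation with the auxiliary exponent $\tilde\gamma$ is carried out correctly. The paper instead applies the abstract machinery (Corollary~\ref{cor2.1}, Corollary~\ref{cor2.15} and Theorem~\ref{thm2.5}) directly to the Japanese bracket weights, after first characterising in Lemma~\ref{lemma3.2} exactly when a pair $(\langle x\rangle^a,\langle x\rangle^b)$ lies in $A_{p,q}^\alpha$; the inequalities in the hypothesis of Theorem~\ref{thm1.6} are precisely the output of that lemma with $\tilde w=\langle x\rangle^\mu$, $\mu=\theta\alpha+(1-\theta)\beta$.

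The gap in your argument is the one you yourself flag: for $\alpha<0$ or $\beta<0$ the pointwise comparison $|x|^\alpha\lesssim\langle x\rangle^\alpha$ goes the wrong way, so the reduction to Theorem~\ref{thm1.5} simply does not close. Your proposed fix---invoke Theorem~\ref{thm2.5} directly---is exactly what the paper does, but you misidentify what must be checked. The hypotheses of Theorem~\ref{thm2.5} do \emph{not} ``reduce to the standard fact that $\langle x\rangle^c\in A_p$''. The crucial condition there is the two-weight fractional condition $(\tilde w^a,w^r)\in A_{a,r}^{\theta s-t}$, which for bracket weights is a genuinely five-parameter computation (this is Lemma~\ref{lemma3.2}) and is what produces the chain of inequalities in the statement. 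The single-weight $A_p$ membership you quote only handles the $A_\infty$ side conditions and the $A_{p,p}$, $A_{q,q}$ pieces coming from Corollary~\ref{cor2.15}; it says nothing about the $A_{a,r}^{\theta s-t}$ requirement for the Sobolev step. So as written the fallback is incomplete: you must still do the work of Lemma~\ref{lemma3.2}, at which point the elementary decomposition becomes redundant.

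A minor secondary issue: your interior argument uses the unweighted inequality at exponent $r_A$ given by $1/r_A=\theta/p+(1-\theta)/q-(\theta s-t)/d$, but nothing in the hypotheses forces $1/r_A>0$ (take for instance $d=3$, $p=q=2$, $\theta=1$, $t=0$, $s=2$, $\alpha=1.4$, $\gamma=0$, $r=10/3$), so the appeal to Theorem~\ref{thm1.5} with $r_A$ in place of $r$ can fail.
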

The proofs of these results all follow the same strategy. The idea is to consider first the extreme cases \(\theta=1\) and \(\theta=t/s\). The reason for this is that if the inequalities are satisfied at both of these extremes then we can apply the \(\theta=1\) case, a Sobolev inequality, to get
\[
\|D^tf\|_{L^r}\lesssim \|D^{\theta s}f\|_{L^a}
\]
with \(1/a=\theta/p+(1-\theta)/q\) and where the value of $ t/s< \theta < 1$ here is now the one that we seek for the final Gagliardo-Nirenberg interpolated inequality. Then, we can use the other extreme inequality to get
\[
\|D^{\theta s}f\|_{L^a}\lesssim \|D^sf\|_{L^p}^\theta \|f\|_{L^q}^{1-\theta},
\]
again with $ t/s< \theta < 1$ for the final interpolated value.
The first inequality works because
\[
\frac{1}{r}-\frac{t}{d}=\frac{1}{a}-\frac{\theta s}{d}.
\]
And the second one works because
\[
\frac{1}{a}=\frac{\theta}{p}+\frac{1-\theta}{q}.
\]
By using this idea, the problem is essentially reduced to proving a weighted Sobolev inequality (\(\theta=1\)) and the minimum exponent inequality (\(\theta=t/s\)). For the Sobolev inequality, the idea is to compare with the Riesz potential operator and reduce the problem to proving the weighted Hardy-Littlewood-Sobolev inequality
\[
\|\I_\alpha f\|_{L^q(w)}\lesssim \|f\|_{L^p(v)}.
\]
As for the minimum exponent inequality, the idea is to use results from interpolation theory, namely the three lines lemma. As we will see later, for this approach to work we need an estimate of the form
\[
\|D^{i\tau}f\|_{L^p(w)}\lesssim_\tau \|f\|_{L^p(v)},
\]
and we need to be careful about the explicit dependence of the constant on \(\tau\). To prove this inequality we use theorem \ref{thm1.2} and so we must check that this theorem can be applied to the operators \(D^{i\tau}\) and \(J^{i\tau}\).

The paper is structured as follows. In section \ref{general} we consider this approach with general weights, separating between the cases \(\theta=1\) and \(\theta=t/s\). In section \ref{particular} we study particular families of weights and our main goal then will be to prove theorems \ref{thm1.5} and \ref{thm1.6}. In section \ref{nonhomderivatives} we show how to adapt the arguments to non-homogeneous derivatives and finally in section \ref{mixed} we give a simple application to a mixed norm inequality.

\section{General weights}\label{general}

In this section we consider general weight functions \(w\) assuming only that they are locally integrable and \(0<w(x)<+\infty\) almost everywhere. Our goal is to obtain an inequality of the form
\[
\|D^tf\|_{L^r(w)}\lesssim \|D^sf\|_{L^p(u)}^\theta\|f\|_{L^q(v)}^{1-\theta}.
\]

As was explained in the introduction, we first consider the cases \(\theta=1\) and \(\theta=t/s\). We begin with the case \(\theta=1\) which is a weighted version of the homogeneous Sobolev inequality.

\subsection{Weighted homogeneous Sobolev inequality}
The goal of this section is to prove the inequality
\[
\|D^tf\|_{L^q(w)}\lesssim \|D^sf\|_{L^p(v)},
\]
with $1<p<q<\infty$. Intuitively, writing $D^{t}f=D^{t-s}D^{s}f$  and setting \(\alpha=s-t\) we are led to the weighted Hardy-Littlewood-Sobolev inequality
\begin{equation}\label{eq2.2}
\|\I_\alpha \phi\|_{L^q(w)}\lesssim \|\phi\|_{L^p(v)},
\end{equation}
for $\phi= D^s f$. 

To prove inequality \eqref{eq2.2} the idea is to dominate \(\I_\alpha\) by the sparse operator 
\[
\mcal{A}_\S^\alpha f(x) = \sum_{Q\in \S}\chi_Q(x)|Q|^\frac{\alpha}{d}\dashint_Q|f|.
\]
From \cite{Uribe} we can compare \(\I_\alpha\) pointwise to this operator. Indeed, the following two propositions hold.
\begin{prop}[\cite{Uribe}, proposition 3.6]\label{prop3.1}
There are dyadic lattices \(\mcal{D}^1,\dots, \mcal{D}^{3^d}\) such that for any \(f\geq 0\) measurable and bounded with compact support, we have that
\[
\mcal{A}^\alpha_{\D^j} f(x)\lesssim_{d,\alpha} \I_\alpha f(x)\lesssim_{d,\alpha} \sum_{j=1}^{3^d}\mcal{A}^\alpha_{\D^j}f(x),\ \forall x\in \R^d.
\]
\end{prop}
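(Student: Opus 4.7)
The plan is to establish the two bounds separately, exploiting the geometric relationship between the Euclidean distance $|x-y|$ and the sidelengths of dyadic cubes that contain both $x$ and $y$.

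For the lower bound $\mcal{A}^\alpha_{\D^j} f(x) \lesssim_{d,\alpha} \I_\alpha f(x)$, I would fix a lattice $\D^j$ and work with the nested chain $\{Q^{(k)}\}_{k\in\Z} \subset \D^j$ of cubes containing $x$, with $\ell(Q^{(k)}) = 2^k$. Writing $\int_{Q^{(k)}} f$ as a sum over the dyadic annuli $A_n := Q^{(n)} \setminus Q^{(n-1)}$ and interchanging the order of summation, the geometric series $\sum_{k \geq n} |Q^{(k)}|^{\alpha/d - 1}$ converges (because $\alpha < d$) and yields
\[
\mcal{A}^\alpha_{\D^j} f(x) \lesssim_{d,\alpha} \sum_{n \in \Z} 2^{n(\alpha-d)} \int_{A_n} f(y)\,dy.
\]
On $A_n$ both $x$ and $y$ lie in $Q^{(n)}$, hence $|x-y| \leq \sqrt{d}\,2^n$ and $|x-y|^{-(d-\alpha)} \gtrsim 2^{n(\alpha-d)}$. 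Since the $\{A_n\}_{n\in \Z}$ partition $\R^d$, assembling these estimates recovers a constant multiple of $\I_\alpha f(x)$.

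For the upper bound, I would decompose the Riesz integral into dyadic shells around $x$:
\[
\I_\alpha f(x) \leq \sum_{k \in \Z} 2^{-k(d-\alpha)} \int_{B(x,\,2^{k+1})} f(y)\,dy,
\]
and then invoke the \emph{three-lattice theorem}, which produces translated dyadic lattices $\D^1, \dots, \D^{3^d}$ such that every ball (or cube) $B \subset \R^d$ is contained in some $Q \in \bigcup_j \D^j$ with $\ell(Q) \leq 3\,\ell(B)$. Applying this to $B(x, 2^{k+1})$ for each $k$ produces a cube $Q_k \in \D^{j_k}$ with $x \in Q_k$ and $|Q_k| \approx 2^{kd}$, so that the $k$-th shell contribution is bounded by $|Q_k|^{\alpha/d}\dashint_{Q_k} f$. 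Summing in $k$ and then extending to the full sum over every cube of every lattice (which only adds non-negative terms) yields the required domination by $\sum_{j=1}^{3^d} \mcal{A}^\alpha_{\D^j} f(x)$.

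The main obstacle is the three-lattice theorem itself, whose proof rests on a combinatorial construction (translating the standard lattice by vectors whose dyadic expansions use digits from a controlled finite set); without a collection of lattices that jointly cover every scale and location, a single dyadic family cannot dominate the rotation-invariant kernel $|x-y|^{-(d-\alpha)}$. Once this ingredient is accepted, the rest reduces to one geometric series and a telescoping rearrangement, with the assumption $\alpha \in (0,d)$ essential to guarantee convergence of the series on both ends.
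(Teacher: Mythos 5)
The paper does not prove this statement; it simply cites Proposition 3.6 of Cruz-Uribe's notes \cite{Uribe}. Your argument correctly reconstructs the standard proof given there: the lower bound via the telescoping decomposition $\int_{Q^{(k)}} f = \sum_{n\leq k}\int_{A_n} f$ over the chain of lattice cubes containing $x$, with the geometric series $\sum_{k\geq n}2^{k(\alpha-d)}$ convergent precisely because $\alpha<d$, and the comparison $|x-y|\lesssim\sqrt{d}\,2^n$ on $A_n$; the upper bound via the dyadic-shell decomposition of the Riesz kernel together with the Lerner--Nazarov three-lattice theorem. The only step you leave implicit is that, for a fixed lattice $\D^j$, a given cube $Q\in\D^j$ can be selected as the dominating cube $Q_k$ for at most a bounded (dimensional) number of shells $k$, so that replacing $\sum_k |Q_k|^{\alpha/d}\dashint_{Q_k}f$ by the full sums $\sum_j\mcal{A}^\alpha_{\D^j}f(x)$ only costs a dimensional constant rather than an infinite overcount; this is routine but worth saying.
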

\begin{prop}[\cite{Uribe}, proposition 3.9]\label{prop3.2}
Let \(\D\) be a dyadic lattice, \(\alpha\in]0,d[\) and let \(f\geq 0\) be a bounded, measurable function with compact support. Then, there exists a sparse set \(\mcal{S}=\mcal{S}(f)\subseteq \D\) such that
\[
\mcal{A}^\alpha_\mcal{S}f(x)\leq \mcal{A}^\alpha _\D f(x)\lesssim_{d,\alpha} \mcal{A}^\alpha_\mcal{S}f(x),\ \forall x\in \R^d.
\]
\end{prop}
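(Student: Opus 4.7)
The plan is to construct $\mcal{S}$ by a Calder\'on--Zygmund stopping-time (``principal cubes'') argument on the dyadic averages $\dashint_Q f$, and then dominate $\mcal{A}^\alpha_\D f(x)$ pointwise by $\mcal{A}^\alpha_\mcal{S} f(x)$ by grouping the sum over $\D$ according to each cube's nearest ancestor in $\mcal{S}$. The hypothesis $\alpha>0$ will enter precisely in the convergence of a geometric sum over dyadic sidelengths.

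For the construction, I would fix a jump factor $C>2^d$ (say $C=2^{d+1}$) and, for each $k\in\Z$, let $\mcal{S}_k$ be the family of maximal dyadic cubes $Q\in\D$ with $\dashint_Q f>C^k$. Since $f$ is bounded with compact support, the inequality $|Q|C^k<\int_Q f\leq\|f\|_{L^1}$ forces $|Q|$ to be bounded for each fixed $k$, so such maximal cubes exist and form an at-most countable family. Setting $\mcal{S}=\bigcup_k \mcal{S}_k$, I would verify sparseness as follows: for $Q\in\mcal{S}_k$ the parent of $Q$ fails the stopping test, hence $\dashint_Q f\leq 2^d C^k$, and the disjoint family of $\mcal{S}_{k+1}$-descendants of $Q$ satisfies
\[
\sum_{Q'\in\mcal{S}_{k+1},\,Q'\subseteq Q}|Q'|\;<\;\frac{\int_Q f}{C^{k+1}}\;\leq\;\frac{2^d}{C}|Q|\;\leq\;\tfrac{1}{2}|Q|.
\]
Then $E(Q):=Q\setminus\bigcup\{Q'\in\mcal{S}_{k+1}:Q'\subseteq Q\}$ satisfies $|E(Q)|\geq\tfrac{1}{2}|Q|$, and the $E(Q)$ are pairwise disjoint by the nested structure of $\mcal{S}$, making $\mcal{S}$ a $\tfrac{1}{2}$-sparse family.

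For the pointwise inequality, the bound $\mcal{A}^\alpha_\mcal{S}f\leq\mcal{A}^\alpha_\D f$ is immediate since $\mcal{S}\subseteq\D$. For the reverse, fix $x\in\R^d$ and, for each $R\in\D$ with $x\in R$ and $\dashint_R f>0$, let $\pi(R)$ denote the unique element of $\mcal{S}_{k^*}$ containing $R$, with $k^*$ the largest integer such that $R$ is contained in some element of $\mcal{S}_k$ (finite by boundedness of $f$). Since $R$ is not contained in any $\mcal{S}_{k^*+1}$-cube, a short argument gives $\dashint_R f\leq C\,\dashint_{\pi(R)}f$. Grouping the dyadic sum by $\pi(R)$,
\[
\mcal{A}^\alpha_\D f(x)\;\leq\;C\sum_{\substack{S\in\mcal{S}\\x\in S}}\dashint_S f\,\sum_{\substack{R\in\D,\,x\in R\\\pi(R)=S}}|R|^{\alpha/d}.
\]
For each $S$ the inner sum ranges over a nested chain of dyadic subcubes of $S$ containing $x$, whose sidelengths form a geometric sequence; this is where $\alpha>0$ is needed, giving the bound $\sum|R|^{\alpha/d}\leq |S|^{\alpha/d}/(1-2^{-\alpha})$. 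Combining the two yields $\mcal{A}^\alpha_\D f(x)\lesssim_{d,\alpha}\mcal{A}^\alpha_\mcal{S}f(x)$.

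The main obstacle is the absence of a maximal element in the dyadic lattice of $\R^d$, which precludes initializing a single top-down Lerner-style stopping from one ``root'' cube; this is handled by the level-wise definition above, where the boundedness and compact support of $f$ guarantee that each $\mcal{S}_k$ is well-defined. The complementary assumption $\alpha<d$ is used implicitly to ensure $\mcal{A}^\alpha_\D f(x)<\infty$, through the tail estimate $|Q|^{\alpha/d}\dashint_Q f=|Q|^{\alpha/d-1}\int_Q f\to 0$ as $|Q|\to\infty$, so that the pointwise inequality is meaningful for every $x$.
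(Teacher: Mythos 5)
The paper does not prove this proposition; it is cited verbatim from \cite{Uribe} (Proposition~3.9). Your argument is correct and reproduces the standard Calder\'on--Zygmund stopping-time (``principal cubes'') construction that is essentially how the result is established in the cited reference, so the comparison is really just against the standard proof.

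Two small observations, neither of which is a gap. First, the choice $C>2^d$ does more work than you explicitly state: combined with the parent bound $\dashint_Q f\le 2^dC^k$, the condition $2^d<C$ forces the intervals $(C^k,2^dC^k]$ to be pairwise disjoint, so each $Q\in\mcal{S}$ lies in exactly one $\mcal{S}_k$. This is what makes $E(Q)$ unambiguously defined and guarantees that no $Q'\in\mcal{S}_{k+1}$ with $Q'\subseteq Q$ can equal $Q$ itself. Second, the disjointness of the $E(Q)$ ``by the nested structure'' deserves one more line: if $Q_2\subsetneq Q_1$ are both in $\mcal{S}$, with $Q_1\in\mcal{S}_{k_1}$, then $Q_2\in\mcal{S}_{k_2}$ for some $k_2\ge k_1+1$ (the maximality of $Q_2$ fails at any lower level because $Q_1$ is an ancestor with too-large average), hence $\dashint_{Q_2}f>C^{k_1+1}$; the maximal cube $Q'\supseteq Q_2$ with average exceeding $C^{k_1+1}$ satisfies $Q'\subsetneq Q_1$ (since $\dashint_{Q_1}f\le 2^dC^{k_1}<C^{k_1+1}$ excludes $Q'=Q_1$, and any $Q'\supsetneq Q_1$ would violate the maximality of $Q_1$ at level $k_1$); therefore $Q_2\subseteq Q'$ with $Q'\in\mcal{S}_{k_1+1}$, $Q'\subseteq Q_1$, so $Q_2$ is swallowed by the removed set and $E(Q_1)\cap E(Q_2)=\emptyset$. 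With these points spelled out, the proof is complete, and your identification of where $\alpha>0$ enters (the geometric sum over the chain with common stopping parent) and where $\alpha<d$ enters (finiteness of $\mcal{A}^\alpha_\D f$) is accurate.
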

This reduces the estimate to proving two-weight norm inequalities for the sparse operator \(\mcal{A}_\S ^\alpha\). These types of weighted inequalities naturally lead to the consideration of a more general condition than the \(A_{p, q}\) condition.
\begin{defn}
Let \(1<p,q<\infty\) and \(\alpha\in\R\). We say two weights \(v,w\) satisfy the \(A_{p,q}^\alpha\) condition if 
\[
[v,w]_{A_{p,q}^\alpha}:=\sup_Q|Q|^{\frac{\alpha}{d}-1}\left(\int_Qv^{-\frac{p'}{p}}\right)^{1/p'}\left(\int_Qw\right)^{1/q}<\infty.
\]
The set of all such weights is denoted by \(A_{p,q}^\alpha\).
\end{defn}
The problem of obtaining two-weight norm inequalities for sparse operators has been extensively studied, and for our purposes, theorem 1.1 from \cite{Hytonen bounds for sparse operators} will suffice.
\begin{thm}[Theorem 1.1 from \cite{Hytonen bounds for sparse operators} with \(r=1\)]\label{bound for sparse operators}
Let \(1 < p \leq q < \infty\), \(0\leq \alpha < d\) and let \(\S\) be a sparse family of cubes from a dyadic lattice \(\mcal{D}\). If \(v^{-\frac{p'}{p}}, w\in A_\infty\) and \((v, w) \in A_{p,q}^\alpha\), then 
\[
\|\mcal{A}_\S^\alpha f\|_{L^q(w)}\lesssim \|f\|_{L^p(v)}.
\]
\end{thm}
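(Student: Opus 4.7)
The plan is to dualize the estimate, extract the $A_{p,q}^\alpha$ testing constant, and close via a dyadic Carleson embedding in each weighted measure, with the $A_\infty$ hypotheses transferring sparseness from Lebesgue measure to $\sigma\,dx$ and $w\,dx$, where $\sigma:=v^{-p'/p}$. After the substitution $f\mapsto f\sigma$ the bound $\|\mcal{A}_\S^\alpha f\|_{L^q(w)}\lesssim \|f\|_{L^p(v)}$ is equivalent to $\|\mcal{A}_\S^\alpha(f\sigma)\|_{L^q(w)}\lesssim \|f\|_{L^p(\sigma)}$, and dualizing against $g\in L^{q'}(w)$ reduces matters to the bilinear inequality
\[
\sum_{Q\in\S}|Q|^{\alpha/d - 1}\sigma(Q)w(Q)\,\langle f\rangle_{Q,\sigma}\langle g\rangle_{Q,w}\lesssim \|f\|_{L^p(\sigma)}\|g\|_{L^{q'}(w)},
\]
for nonnegative $f,g$, where $\langle h\rangle_{Q,\mu}:=\mu(Q)^{-1}\int_Q h\,d\mu$.

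A short algebraic rewriting of the $A_{p,q}^\alpha$ condition gives the pointwise-in-$Q$ bound $|Q|^{\alpha/d-1}\sigma(Q)w(Q)\leq [v,w]_{A_{p,q}^\alpha}\,\sigma(Q)^{1/p}w(Q)^{1/q'}$. Separately, the Fujii--Wilson form of the $A_\infty$ hypothesis for $\sigma$ and for $w$ provides constants such that $\sigma(E(Q))\gtrsim \sigma(Q)$ and $w(E(Q))\gtrsim w(Q)$ for every $Q\in\S$. Consequently, the bilinear sum above is controlled by
\[
\sum_{Q\in\S}\sigma(E(Q))^{1/p}w(E(Q))^{1/q'}\,\langle f\rangle_{Q,\sigma}\langle g\rangle_{Q,w},
\]
and the remaining task is to embed this discrete expression into $L^p(\sigma)\times L^{q'}(w)$.

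In the diagonal case $p=q$, Hölder with the conjugate pair $(p,p')$ factors the sum, and each factor is handled via the pointwise bound $\langle f\rangle_{Q,\sigma}\leq M_\sigma^{\mcal{D}}f(x)$ for $x\in E(Q)\subset Q$, followed by disjointness of $\{E(Q)\}$ and the $L^p(\sigma)$-boundedness of the dyadic maximal operator in its own measure, yielding the Carleson-type bound $\sum_Q \sigma(E(Q))\langle f\rangle_{Q,\sigma}^p\leq \|M_\sigma^{\mcal{D}}f\|_{L^p(\sigma)}^p\lesssim \|f\|_{L^p(\sigma)}^p$, and symmetrically for $g$. The off-diagonal case $p<q$ is where the main obstacle lies, since then $\tfrac{1}{p}+\tfrac{1}{q'}>1$ and discrete Hölder with the pair $(p,q')$ is inadmissible; I would handle it by running principal-cube stopping-time decompositions independently in the measures $\sigma$ and $w$ so that $\langle f\rangle_{Q,\sigma}$ and $\langle g\rangle_{Q,w}$ are essentially constant on sparser subfamilies, and then applying an off-diagonal dyadic Carleson embedding. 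The subtlety is to coordinate the two independent stopping-time constructions with the sparseness of $\S$ while keeping the final constant controlled only by $[v,w]_{A_{p,q}^\alpha}$ and the $A_\infty$ constants of $\sigma$ and $w$.
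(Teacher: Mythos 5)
The paper does not prove this statement; it is cited directly as Theorem~1.1 of Fackler and Hyt\"onen with $r=1$, so there is no internal proof against which to compare. That said, most of your blind attempt is sound: the dualization, the substitution $f\mapsto f\sigma$ with $\sigma=v^{-p'/p}$, the algebraic rewriting $|Q|^{\alpha/d-1}\sigma(Q)w(Q)\leq[v,w]_{A_{p,q}^\alpha}\,\sigma(Q)^{1/p}w(Q)^{1/q'}$, and the use of $A_\infty$ to transfer Lebesgue sparseness of $\{E(Q)\}$ to sparseness in the measures $\sigma\,dx$ and $w\,dx$ are all correct. (One small calibration: what you need is not really the Fujii--Wilson characterization but rather that $w\in A_\infty$ means $w\in A_s$ for some $s<\infty$, which yields $\frac{|A|}{|Q|}\leq [w]_{A_s}^{1/s}\big(\frac{w(A)}{w(Q)}\big)^{1/s}$ for measurable $A\subseteq Q$; applied to $A=E(Q)$ this gives $w(E(Q))\gtrsim_\eta w(Q)$.)

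The genuine gap is your treatment of the off-diagonal case $p<q$, which you call ``the main obstacle'' and propose to resolve with two independent principal-cube stopping times coordinated with the sparse family. This is a misdiagnosis: the off-diagonal case is in fact \emph{easier} than the diagonal one, and no stopping-time construction is required. Set $a_Q:=\int_{E(Q)}(M^{\mcal{D}}_\sigma f)^p\,\sigma\,dx$ and $b_Q:=\int_{E(Q)}(M^{\mcal{D}}_w g)^{q'}\,w\,dx$, so that your reduction gives $\sum_Q\sigma(E(Q))^{1/p}w(E(Q))^{1/q'}\langle f\rangle_{Q,\sigma}\langle g\rangle_{Q,w}\leq\sum_Q a_Q^{1/p}b_Q^{1/q'}$. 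Since $p\leq q$ one has $\frac1p-\frac1q\geq0$; writing $a_Q^{1/p}=a_Q^{1/p-1/q}\,a_Q^{1/q}$ and using the trivial bound $a_Q\leq\sum_{Q'}a_{Q'}$ gives $\sum_Q a_Q^{1/p}b_Q^{1/q'}\leq\big(\sum_Q a_Q\big)^{1/p-1/q}\sum_Q a_Q^{1/q}b_Q^{1/q'}$, and H\"older with the conjugate pair $(q,q')$ then yields $\sum_Q a_Q^{1/p}b_Q^{1/q'}\leq\big(\sum_Q a_Q\big)^{1/p}\big(\sum_Q b_Q\big)^{1/q'}$. Disjointness of the $E(Q)$'s together with the $L^p(\sigma)$-boundedness of $M^{\mcal{D}}_\sigma$ and the $L^{q'}(w)$-boundedness of $M^{\mcal{D}}_w$ (both universal in the measure) bounds the right-hand side by $\|f\|_{L^p(\sigma)}\|g\|_{L^{q'}(w)}$, uniformly in $p\leq q$. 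So the diagonal and off-diagonal regimes are handled by the same one-line discrete H\"older step, and the coordination problem you anticipated does not arise.
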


We now use the sparse operator to control the Riesz potential operator and obtain a weighted Hardy-Littlewood-Sobolev inequality.
\begin{prop}\label{weighted hardy-littlewood-sobolev}
Let \(1<p\leq q<\infty, 0<\alpha<d\). Suppose that \(v, w\) are weights such that \((v, w)\in A_{p,q}^\alpha\) and \(v^{-\frac{p'}{p}}, w\in A_\infty\). Then, we have that
\[
\| \I_\alpha f\|_{L^q(w)}\lesssim \| f\|_{L^p(v)}.
\]
\end{prop}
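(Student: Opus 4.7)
The proof amounts to stitching together the three ingredients that have just been recalled. First, I would reduce to the case in which $f \geq 0$ is bounded and compactly supported: for non-negative $f \in L^p(v)$ we pick an increasing sequence $f_n$ of bounded compactly supported non-negative functions with $f_n \uparrow f$, apply the inequality to each $f_n$, and invoke monotone convergence on both sides (noting that $\I_\alpha f_n \uparrow \I_\alpha f$ since the kernel is non-negative); for signed or complex $f$ we use $|\I_\alpha f| \leq \I_\alpha |f|$. This reduction is what allows us to invoke Propositions \ref{prop3.1} and \ref{prop3.2}, which require bounded compactly supported test functions.

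Having made the reduction, I would apply Proposition \ref{prop3.1} to obtain dyadic lattices $\D^1, \dots, \D^{3^d}$ with
\[
\I_\alpha f(x) \lesssim_{d,\alpha} \sum_{j=1}^{3^d} \mcal{A}^\alpha_{\D^j} f(x).
\]
For each fixed $j$, Proposition \ref{prop3.2} produces a sparse subfamily $\S_j \subseteq \D^j$ (depending on $f$) such that $\mcal{A}^\alpha_{\D^j} f(x) \lesssim_{d,\alpha} \mcal{A}^\alpha_{\S_j} f(x)$ pointwise. Taking $L^q(w)$ norms and using the triangle inequality therefore gives
\[
\|\I_\alpha f\|_{L^q(w)} \lesssim \sum_{j=1}^{3^d} \|\mcal{A}^\alpha_{\S_j} f\|_{L^q(w)}.
\]

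The final step is to bound each $\|\mcal{A}^\alpha_{\S_j} f\|_{L^q(w)}$ by $\|f\|_{L^p(v)}$. Since the assumptions $(v,w) \in A_{p,q}^\alpha$ and $v^{-p'/p}, w \in A_\infty$ are exactly the hypotheses of Theorem \ref{bound for sparse operators} (with $1 < p \leq q < \infty$ and $0 < \alpha < d$ as given), that theorem applies to the sparse family $\S_j$ and yields $\|\mcal{A}^\alpha_{\S_j} f\|_{L^q(w)} \lesssim \|f\|_{L^p(v)}$ with an implicit constant independent of $j$ (and of $f$, since the $A_{p,q}^\alpha$ and $A_\infty$ constants are intrinsic to the weights). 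Summing the $3^d$ terms finishes the argument.

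No single step is really an obstacle here; the main subtlety is simply that the sparse family $\S_j$ provided by Proposition \ref{prop3.2} depends on $f$, so it is essential that the constant in Theorem \ref{bound for sparse operators} depends only on $p, q, \alpha, d$ and on the $A_{p,q}^\alpha$ and $A_\infty$ characteristics of the weights, rather than on the particular sparse family. Once that is observed, the passage from the pointwise sparse domination to the two-weight norm inequality is immediate.
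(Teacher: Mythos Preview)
Your proof is correct and follows essentially the same approach as the paper: reduce to nonnegative bounded compactly supported $f$ by truncation and monotone convergence, dominate $\I_\alpha f$ pointwise by a finite sum of sparse operators via Propositions \ref{prop3.1} and \ref{prop3.2}, and then apply Theorem \ref{bound for sparse operators} to each sparse piece. Your added remark that the constant in Theorem \ref{bound for sparse operators} depends only on the weight characteristics and not on the particular sparse family $\S_j$ (which depends on $f$) is exactly the point that makes the argument work, and is left implicit in the paper's version.
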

\begin{proof}
Without loss of generality assume that \(f\geq 0\). Now put \(f_N=\min\{f,N\}\chi_{B_N}\). Then, we know that 
\[
\I_\alpha(f_N)(x)\lesssim_{d,\alpha}\sum_{j=1}^{3^d}\mcal{A}_{\mcal{S}^j}^\alpha (f_N)(x),
\]
where \(\mcal{S}^j\subseteq \D^j\) are sparse sets. Therefore, from theorem \ref{bound for sparse operators}, we get
\[
\| \I_\alpha (f_N)\|_{L^q(w)}\lesssim\sum_{j=1}^{3^d}\| \mcal{A}_{\mcal{S}^j}^\alpha(f_N)\|_{L^q(w)}\lesssim\sum_{j=1}^{3^d}\| f_N\|_{L^p(w)}\lesssim\| f\|_{L^p(w)}.
\]
The result now follows from the monotone convergence theorem.
\end{proof}

Now that we have a weighted Hardy-Littlewood-Sobolev inequality at our disposal, we simply have to relate this to the Riesz fractional derivatives. For $0<\mbox{Re}(\alpha)<d$, if \(\phi\in \mcal{S}(\R^d)\), then 
\begin{equation}\label{RieszDer}
	\begin{split}
		D^{-\alpha}\phi(x)&=\int_{\R^d}e^{2\pi i \xi\cdot x}|\xi|^{-\alpha}\hat{\phi}(\xi)d\xi = c_{\alpha,d}\int_{\R^d}|y|^{\alpha-d}\phi(x-y)dy\\
		&=c_{\alpha,d}\I_\alpha \phi(x),
	\end{split}
\end{equation}
where we used the well-known\footnote{For a reference see \cite{CGrafakos}, section 2.4.3}
fact that \((|\cdot|^{-\alpha})^\wedge(\xi)=c_{\alpha,d}|\xi|^{\alpha-d}\) and
\[
c_{\alpha,d}=\pi^{\alpha-\frac{d}{2}}\frac{\Gamma\left(\frac{d-\alpha}{2}\right)}{\Gamma\left(\frac{\alpha}{2}\right)}.
\]

This argument works particularly well if one considers only Schwartz functions with zero moments:
\[
\mcal{S}_0(\R^d):=\{f\in \mcal{S}(\R^d): \int_{\R^d}x^\gamma f(x)dx=0\text{ for all multi-indices }\gamma\}. 
\]
These functions have the nice property that \(D^z\) maps \(\mcal{S}_0(\R^d)\) into itself homeomorphically, where the topology is the induced topology from \(\mcal{S}(\R^d)\) and \(z\) is any complex number. Also the semigroup property \(D^{z+w}=D^zD^w\) is satisfied in this space. 
This relationship allows us to use theorem \ref{weighted hardy-littlewood-sobolev} immediately to obtain, for \(f\in \mcal{S}_0(\R^d)\),
\[
\|D^tf\|_{L^q(w)}=\|D^{-\alpha}D^sf\|_{L^q(w)}=c_{\alpha,d}\|\I_\alpha[D^sf]\|_{L^q(w)}\lesssim \|D^sf\|_{L^p(v)}.
\]
However, we would also like to obtain such an inequality for Schwartz functions. One reason why proving this only for \(\mcal{S}_0(\R^d)\) is unsatisfying is that this space is not dense in weighted \(L^p\) spaces where Schwartz functions are dense.\footnote{For example, \(\S(\R^d)\) is dense in \(L^2(\langle x\rangle^{d+1})\), but \(\mcal{S}_0(\R^d)\) is not.} However, extending this to Schwartz functions presents some issues because the semigroup property of the composition of Riesz derivatives does not work so well, since $D^z$ does not map $\mcal{S}(\R^d)$ to $\mcal{S}(\R^d)$. With a bit more care, though, it is nevertheless still possible to do this. 

We begin by defining the spaces
\[
D^{\geq a}\mcal{S}(\R^d)=\{f\in L^\infty: f=D^bg\text{ for some }g\in \mcal{S}(\R^d)\text{ and }b\geq a\},\ a>-d.
\]
We could similarly define \(D^{>a}\mcal{S}(\R^d)\) and \(D^{=a}\mcal{S}(\R^d)\). One easily extends the definition of the operator \(D^s\) to these spaces by setting, for example, for $f\in D^{=a}\mcal{S}(\R^d)$,
\[
D^sf(x)=\int_{\R^d}e^{2\pi i x\cdot \xi}|\xi|^s\hat{f}(\xi)d\xi=\int_{\R^d}e^{2\pi i x\cdot \xi}|\xi|^{s}|\xi|^{a}\hat{g}(\xi)d\xi,
\]
as long as \(a>-d\) and \(a+s>-d\). Functions in \(D^{\geq 0}\mcal{S}(\R^d)\) are especially well behaved. Indeed it is clear that if \(f\in D^{\geq 0}\mcal{S}(\R^d)\), then \(\langle \xi\rangle^s\hat{f}=\langle \xi\rangle^s|\xi|^b\hat{g} \in L^2\) for any \(s\in \R\), so \(D^{\geq 0}\mcal{S}(\R^d)\subseteq H^s(\R^d),\forall s\in \R\), where here \(H^s\) is the Sobolev space
\[
H^s(\R^d)=\{f\in \mcal{S}'(\R^d): \langle \xi\rangle^s\hat{f}\in L^2\}. 
\]
By the Sobolev embedding theorem we can immediately conclude that functions in \(D^{\geq 0}\mcal{S}(\R^d)\) are smooth, and all their partial derivatives of any order (including the function itself) are bounded, square-integrable, Hölder continuous of any exponent \(0<\delta<1\) and go to zero at infinity. Moreover, for functions in \(D^{>0}\mcal{S}(\R^d)\) we can prove a quantitative decay at infinity.
\begin{prop}\label{prop2.1}
If \(g\in \mcal{S}(\R^d)\) and \(s>0\), then \(|x|^sD^sg(x)\in L^\infty\).
\end{prop}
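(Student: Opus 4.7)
The plan is to establish the pointwise decay $|D^s g(x)|\lesssim (1+|x|)^{-s}$ for $g\in\mcal{S}(\R^d)$ and $s>0$, from which the proposition follows immediately. The estimate on the compact region $\{|x|\leq 1\}$ is free: since $\hat g$ is rapidly decreasing and $|\xi|^s$ is locally integrable for $s>-d$, the product $|\xi|^s\hat g(\xi)$ lies in $L^1(\R^d)$, so $D^s g\in L^\infty(\R^d)$.

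For the decay at infinity, I plan to integrate by parts in the Fourier representation
\[
D^s g(x)=\int_{\R^d} e^{2\pi ix\cdot\xi}|\xi|^s\hat g(\xi)\,d\xi.
\]
Set $k:=\lceil s\rceil$, so that $s\leq k$ and $k-s<1\leq d$; in particular $k<s+d$. Given $x\neq 0$, choose an index $j\in\{1,\dots,d\}$ with $|x_j|\geq |x|/\sqrt d$. Repeated use of $(2\pi ix_j)e^{2\pi ix\cdot\xi}=\partial_{\xi_j}e^{2\pi ix\cdot\xi}$ followed by integration by parts $k$ times in $\xi_j$ gives
\[
(2\pi ix_j)^k D^s g(x)=(-1)^k\int_{\R^d} e^{2\pi ix\cdot\xi}\,\partial_{\xi_j}^k\bigl[|\xi|^s\hat g(\xi)\bigr]\,d\xi.
\]
Each term in the Leibniz expansion $\partial_{\xi_j}^k[|\xi|^s\hat g(\xi)]=\sum_{l=0}^{k}\binom{k}{l}\partial_{\xi_j}^l(|\xi|^s)\,\partial_{\xi_j}^{k-l}\hat g(\xi)$ is pointwise bounded by $C|\xi|^{s-l}(1+|\xi|)^{-N}$ for every $N$, since $|\partial_{\xi_j}^l(|\xi|^s)|\lesssim |\xi|^{s-l}$ away from the origin and $\hat g$ is Schwartz. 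Because $s-l>-d$ for every $l\leq k<s+d$, each such term lies in $L^1(\R^d)$, so $|x_j|^k|D^s g(x)|\lesssim_{g,s,k,d}1$. Combining with $|x_j|\geq |x|/\sqrt d$ and $|x|^s\leq |x|^k$ on $\{|x|\geq 1\}$ yields the desired decay.

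The only delicate point is justifying the integration by parts, given the non-smoothness of $|\xi|^s$ at $\xi=0$. I would handle this by regularization: pick $\chi\in C^\infty(\R^d)$ with $\chi=0$ on $\{|\xi|\leq 1\}$ and $\chi=1$ on $\{|\xi|\geq 2\}$, set $\chi_\epsilon(\xi):=\chi(\xi/\epsilon)$, perform the integration by parts on the smooth integrand $|\xi|^s\hat g(\xi)\chi_\epsilon(\xi)$ (boundary terms at infinity vanish by the Schwartz decay of $\hat g$), and let $\epsilon\to 0$. The main term $\chi_\epsilon\partial_{\xi_j}^k[|\xi|^s\hat g]$ converges to $\partial_{\xi_j}^k[|\xi|^s\hat g]$ in $L^1$ by dominated convergence, while each extra term from the Leibniz rule involving $\partial_{\xi_j}^l\chi_\epsilon$ with $l\geq 1$ is supported on $\{\epsilon\leq|\xi|\leq 2\epsilon\}$ and is controlled by $\epsilon^{-l}|\xi|^{s-(k-l)}$; since $l\geq 1$ forces $k-l\leq k-1<s$, the integral of this term is $O(\epsilon^{s-k+d})$, which vanishes as $\epsilon\to 0$ precisely because $k<s+d$. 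This cutoff-and-pass-to-the-limit procedure is the main---and essentially only---obstacle in the argument.
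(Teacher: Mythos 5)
Your proof is correct, and it takes a genuinely different route from the paper's.

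The paper reduces the statement to a Fourier-decay lemma (Lemma \ref{extralemma1} in the appendix): if $f\in C^N$ with appropriate decay and the top-order derivatives satisfy a weighted Hölder estimate of order $\delta$, then $|\xi|^{N+\delta}\hat f\in L^\infty$. This is applied to $f(\xi)=|\xi|^s\hat g(\xi)$ with $N=[s]$ (the largest integer strictly less than $s$) and $\delta=s-[s]$, so $N+\delta=s$; the bulk of the work is verifying the Hölder estimate for the mildly singular function $|\xi|^s$, which is done in Lemmas \ref{lemmaA.2} and \ref{lemmaA.3}. You instead round \emph{up} to $k=\lceil s\rceil$ and do $k$ integer integrations by parts in the coordinate direction where $|x_j|$ is largest, accepting a singularity of order $|\xi|^{s-k}$ near the origin that remains locally integrable since $k<s+d$. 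The cutoff-and-limit step you describe is precisely what is needed to justify the integration by parts against the non-smooth multiplier, and your estimate $O(\epsilon^{s-k+d})\to 0$ for the extra Leibniz terms is correct. The trade-off: the paper's lemma produces exactly the decay $|D^sg(x)|\lesssim|x|^{-s}$ (matching $N+\delta=s$), while your argument gives the slightly stronger $|x|^{-\lceil s\rceil}$ on $\{|x|\geq 1\}$ — both of course suffice. Your route is more self-contained (no separate Hölder-regularity lemmas), at the price of a regularization argument; the paper's route isolates a reusable Fourier-decay principle from fractional smoothness but requires fairly technical verifications in the appendix.
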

To prove this we use the following lemma.
\begin{lemma}\label{extralemma1}
Suppose \(f\in C^N(\R^d), N\in \N_0\) with \(|x|^{d+1}\partial^\gamma f\in L^\infty(\R^d), \forall |\gamma|\leq N\). Moreover, suppose that for all \(\gamma\) with \(|\gamma|=N\) there is a constant \(C_\gamma\) such that
\begin{equation}\label{eq2.3}
|\partial^\gamma f(x)-\partial^\gamma f(y)|\leq C_\gamma \frac{|x-y|^\delta}{(1+\min\{|x|,|y|\})^{d+1}},
\end{equation}
for some \(0<\delta \leq 1\). Then, it follows that \(|\xi|^{N+\delta}\hat{f}\in L^\infty(\R^d)\).
\end{lemma}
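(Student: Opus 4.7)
The plan is to run the classical "translation trick" for Fourier transforms, applied not to $f$ itself but to $\partial^\gamma f$ for $|\gamma|=N$, and then use the Hölder hypothesis to gain an extra factor of $|\xi|^{-\delta}$. First I would note that for every $|\gamma|\leq N$ the hypothesis $|x|^{d+1}\partial^\gamma f\in L^\infty$ immediately forces $\partial^\gamma f\in L^1(\R^d)$ (split the integral over $|x|\leq 1$ and $|x|>1$, using $\partial^\gamma f\in L^\infty$ on the first piece by continuity of $f$ and the hypothesis on the second). Consequently $\widehat{\partial^\gamma f}$ is well-defined and bounded, and the standard identity $\widehat{\partial^\gamma f}(\xi)=(2\pi i\xi)^\gamma\hat f(\xi)$ holds. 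In particular $\hat f\in L^\infty$, which already handles the trivial regime $|\xi|\leq 1$, since there $|\xi|^{N+\delta}|\hat f(\xi)|\leq\|\hat f\|_\infty$.

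For the main regime $|\xi|\geq 1$, fix a multi-index $\gamma$ with $|\gamma|=N$ and set $h=\xi/(2|\xi|^2)$, so that $e^{-2\pi i h\cdot\xi}=-1$. A change of variables gives
\[
\widehat{\partial^\gamma f}(\xi)=\int_{\R^d}e^{-2\pi i x\cdot\xi}\partial^\gamma f(x)\,dx=-\int_{\R^d}e^{-2\pi i x\cdot\xi}\partial^\gamma f(x-h)\,dx,
\]
whence
\[
2\widehat{\partial^\gamma f}(\xi)=\int_{\R^d}e^{-2\pi i x\cdot\xi}\bigl[\partial^\gamma f(x)-\partial^\gamma f(x-h)\bigr]\,dx.
\]
Plugging in the Hölder bound \eqref{eq2.3} with $y=x-h$ yields
\[
|\widehat{\partial^\gamma f}(\xi)|\lesssim C_\gamma |h|^\delta\int_{\R^d}\frac{dx}{(1+\min\{|x|,|x-h|\})^{d+1}}.
\]
The integral is bounded uniformly in $h$: split $\R^d$ into $\{|x|\leq|x-h|\}$ and $\{|x-h|<|x|\}$, where $\min$ equals $|x|$ or $|x-h|$ respectively, and each piece is bounded by $\int(1+|x|)^{-(d+1)}\,dx<\infty$. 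Therefore $|\widehat{\partial^\gamma f}(\xi)|\lesssim C_\gamma|\xi|^{-\delta}$.

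To upgrade this $\xi^\gamma$ bound into an isotropic $|\xi|^N$ bound, for each $\xi\neq 0$ pick an index $j=j(\xi)$ with $|\xi_j|\geq|\xi|/\sqrt d$ and take $\gamma=Ne_j$. The identity $(2\pi)^N\xi_j^N\hat f(\xi)=\widehat{\partial^\gamma f}(\xi)$ together with the previous estimate gives
\[
|\xi|^N|\hat f(\xi)|\leq d^{N/2}|\xi_j|^N|\hat f(\xi)|\lesssim_{N,d}\max_{|\gamma|=N}C_\gamma\,|\xi|^{-\delta},
\]
so $|\xi|^{N+\delta}|\hat f(\xi)|$ is bounded on $|\xi|\geq 1$. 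Combining with the easy small-$|\xi|$ regime proves $|\xi|^{N+\delta}\hat f\in L^\infty(\R^d)$.

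The only technical point that requires care is the uniform-in-$h$ control of the integral against $(1+\min\{|x|,|x-h|\})^{-(d+1)}$; this is precisely where the $d+1$ exponent in the hypothesis enters, ensuring integrability at infinity on each half of the partition. Everything else is bookkeeping: integrability of $\partial^\gamma f$ to legitimize $\widehat{\partial^\gamma f}=(2\pi i\xi)^\gamma\hat f$, and the elementary comparison $|\xi_j|\geq|\xi|/\sqrt d$ to pass from a single monomial weight to the isotropic weight $|\xi|^N$.
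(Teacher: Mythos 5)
Your proof is correct and follows essentially the same route as the paper's: establish $\widehat{\partial^\gamma f}=(2\pi i\xi)^\gamma\hat f$, apply the translation trick with a shift $h$ satisfying $e^{-2\pi i h\cdot\xi}=-1$, use the Hölder hypothesis and uniform boundedness of $\int(1+\min\{|x|,|x-h|\})^{-(d+1)}\,dx$ to get $|\widehat{\partial^\gamma f}(\xi)|\lesssim|\xi|^{-\delta}$, then pass to the isotropic weight $|\xi|^N$ via a coordinate $j$ with $|\xi_j|\gtrsim|\xi|$. The only cosmetic divergence is that you take $h=\xi/(2|\xi|^2)$ while the paper takes $h=e_j/(2\xi_j)$ along a coordinate axis; both choices work identically.
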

The proof is simple so we omit it from the main text, but for completeness we have included it in appendix \ref{appendixA}.
\begin{proof}[Proof of proposition \ref{prop2.1}]
By lemma \ref{extralemma1} it suffices to check that \(f(\xi)=|\xi|^s\hat{g}(\xi)\) is in \(C^N(\R^d)\), \(|\xi|^{d+1}\partial^\gamma f\in L^\infty(\R^d), \forall |\gamma|\leq N\) and we have the estimate
\[
|\partial^\gamma f(\xi)-\partial^\gamma f(\eta)|\leq C_\gamma \frac{|\xi-\eta|^\delta}{(1+\min\{|\xi|,|\eta|\})^{d+1}}, |\gamma|=N,
\]
with \(N=[s]\) and \(\delta=s-[s]\).\footnote{Here \([s]\) denotes the greatest integer that is strictly smaller than \(s\).} That \(f\) is in \(C^N(\R^d)\) is straightforward to check. One can also prove by induction the formula for the derivatives
\[
\partial^\gamma f(\xi)=\sum_{\gamma_1\leq \gamma}\sum_{\gamma_2\leq \gamma_1}C_{\gamma_1,\gamma_2,s}\xi^{\gamma_2}|\xi|^{s-|\gamma_1|-|\gamma_2|}\partial^{\gamma-\gamma_1}\hat{g}.
\]
Since \(g\in \mcal{S}(\R^d)\), it follows that \(|\xi|^{d+1}|\partial^\gamma f(\xi)|\lesssim\sum_{\gamma_1\leq \gamma} |\xi|^{d+1}|\xi|^{s-|\gamma_1|}|\partial^{\gamma-\gamma_1}\hat{g}(\xi)|\in L^\infty\), since \(d+1+s-|\gamma_1|\geq 0\). Finally, it suffices to check that each term of \(\partial^\gamma f,|\gamma|=N\), satisfies estimate \eqref{eq2.3}. This is done in appendix \ref{appendixA}, lemma \ref{lemmaA.3}.
\end{proof}

Now let \(s,t\) be such that \(t<s<t+d\). Suppose that \(f\in D^{>\tau}\mcal{S}(\R^d)\) where \(\tau=\max\{-t,-d\}\). Then we may write \(f=D^ag\) for some \(g\in \mcal{S}(\R^d)\) and some \(a>\tau\). Set also \(\alpha=s-t\in ]0,d[\). Under these assumptions and using proposition \ref{prop2.1} we see that
\[
\begin{split}
\I_\alpha(|D^sf|)(x)&= \int_{|y|\leq 2|x|}|y|^{\alpha-d}|D^{s+a}g(x-y)|dy+\int_{|y|>2|x|}|y|^{\alpha-d}|D^{s+a}g(x-y)|dy\\
&\lesssim\int_0^{2|x|}\rho^{\alpha-d}d|B_1|\rho^{d-1}d\rho+ \int_{|y|>2|x|}|y|^{\alpha-d}|x-y|^{-s-a}dy\\
&\lesssim |x|^\alpha+\int_{|y|>2|x|}|y|^{\alpha-d}|y|^{-s-a}dy\\
&\lesssim |x|^\alpha+|x|^{-t-a}.
\end{split}
\]
We use this estimate if \(|x|>1\). If \(|x|\leq 1\) we can split between the regions \(|y|\leq 2\) and \(|y|>2\), and we find that \(\I_\alpha(|D^sf|)\) is bounded in \(\overline{B}_1\). Therefore, we have the estimate
\begin{equation}\label{eq2.4}
\I_\alpha(|D^sf|)(x)\lesssim \langle x\rangle^\alpha.
\end{equation}
Now suppose that \(\phi\in \mcal{S}_0(\R^d)\). Then we have that
\[
\langle \I_\alpha(D^sf), \phi\rangle=\int \I_\alpha(D^sf)(x)\phi(x)dx=\int \phi(x)\int |y|^{\alpha-d}D^sf(x-y)dydx.
\]
Using \eqref{eq2.4} we see that
\[
\int\int |\phi(x)||y|^{\alpha-d}|D^sf(x-y)|dydx=\int |\phi(x)|\I_\alpha(|D^sf|)(x)dx\lesssim \int |\phi(x)|\langle x\rangle^\alpha dx<\infty.
\]
Repeatedly applying Fubini and noting that \(\hat{g}|\cdot|^{s+a}\check{\phi}\in \mcal{S}(\R^d)\) we obtain
\[
\begin{split}
\langle \I_\alpha(D^sf),\phi\rangle &=\int |y|^{\alpha-d}\int \phi(x)D^sf(x-y)dxdy\\
&=\int |y|^{\alpha-d}\int \phi(x)\int e^{2\pi i (x-y)\cdot \xi}|\xi|^{s+a}\hat{g}(\xi)d\xi dxdy\\
&=\int |y|^{\alpha-d}\int e^{-2\pi i y\cdot \xi}|\xi|^{s+a}\hat{g}(\xi)\int e^{2\pi i x\cdot \xi}\phi(x)dxd\xi dy\\
&=\int |y|^{\alpha-d}(|\cdot|^{s+a}\hat{g}\check{\phi})^\wedge(y)dy\\
&=\frac{1}{c_{\alpha,d}}\int |\xi|^{-\alpha}|\xi|^{s+a}\hat{g}(-\xi)\check{\phi}(-\xi)d\xi =\frac{1}{c_{\alpha,d}}\int \phi(x)\int e^{2\pi i x\cdot \xi} |\xi|^{t+a}\hat{g}(\xi)d\xi dx\\
&=\frac{1}{c_{\alpha,d}}\int \phi(x)D^tf(x)dx=\frac{1}{c_{\alpha,d}}\langle D^tf,\phi\rangle,
\end{split}
\]
where again we used the fact that \((|\cdot|^{-\alpha})^\wedge(\xi)=c_{\alpha,d}|\xi|^{\alpha-d}\). Since this holds for all \(\phi\in \mcal{S}_0(\R^d)\), then for any \(\psi\in \mcal{S}(\R^d)\) that is zero in a neighborhood of the origin we have \(\langle (c_{\alpha,d}\I_\alpha(D^sf)-D^tf)^\wedge,\psi\rangle=0\). In other words, \(\supp((c_{\alpha,d}\I_\alpha(D^sf)-D^tf)^\wedge)\subseteq \{0\}\). This means that there is some polynomial \(P\) such that \(c_{\alpha,d}\I_\alpha(D^sf)=D^tf-P\). Given that \(D^tf\) vanishes at infinity\footnote{We say a function \(f\) vanishes at infinity with respect to the measure \(\mu\) if \(\mu(\{x: |f(x)|>\lambda\})<\infty,\forall \lambda>0\). We know that \(D^tf\) vanishes at infinity with respect to the Lebesgue measure by the Riemann-Lebesgue lemma.} and \(\I_\alpha(D^sf)\) grows at most like \(\langle x\rangle^\alpha\), it follows that \(\text{deg}(P)\leq \lfloor \alpha\rfloor\). Using theorem \ref{weighted hardy-littlewood-sobolev} we see that
\[
\|D^tf-P\|_{L^q(w)}=c_{\alpha,d}\|\I_\alpha(D^sf)\|_{L^q(w)}\lesssim \|D^sf\|_{L^p(v)}.
\]
This establishes the main result of this section.
\begin{thm}\label{thm2.2}
Let \(1<p\leq q<\infty\) and \(t<s<t+d\). Suppose \((v,w)\in A_{p,q}^{s-t}\), \(v^{-p'/p}, w \in A_\infty\) and set \(\tau=\max\{-d,-t\}\). Then, for any \(f\in D^{>\tau}\mcal{S}(\R^d)\) there exists a polynomial \(P\) of degree at most \(\lfloor s-t \rfloor\) such that
\begin{equation}\label{eq2.5}
\|D^tf-P\|_{L^q(w)}\lesssim\|D^sf\|_{L^p(v)},
\end{equation}
where the implicit constant depends on \(v,w,p,q,d,s\) and \(t\), but not on \(f\).
\end{thm}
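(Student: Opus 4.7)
The plan is to reduce the estimate to the weighted Hardy--Littlewood--Sobolev inequality (Proposition \ref{weighted hardy-littlewood-sobolev}) applied with $\alpha := s - t \in (0,d)$. Formally, the semigroup property together with $D^{-\alpha} = c_{\alpha,d}\mcal{I}_\alpha$ suggests writing $D^t f = c_{\alpha,d}\mcal{I}_\alpha(D^s f)$ and directly applying the two-weight bound
\[
\|\mcal{I}_\alpha h\|_{L^q(w)} \lesssim \|h\|_{L^p(v)}
\]
to $h = D^s f$. The obstruction is that for $f \in D^{>\tau}\mcal{S}(\R^d)$ the composition $D^{-\alpha}\circ D^s$ need not recover $D^t f$ exactly but only up to a polynomial, which is the source of the correction term $P$ in the statement.

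To make this precise I would write $f = D^a g$ with $g \in \mcal{S}(\R^d)$ and $a > \tau$, so that $D^s f$ is well-defined via $|\xi|^{s+a}\hat g \in L^1$. Using Proposition \ref{prop2.1} to control the decay of $D^s f$, I would establish the pointwise growth bound $\mcal{I}_\alpha(|D^s f|)(x) \lesssim \langle x\rangle^\alpha$ by splitting the defining integral at $|y| = 2|x|$, exploiting integrability of $|y|^{\alpha-d}$ near the origin and the decay of $D^s f$ in the far field. This growth estimate, together with the rapid decay of a test function $\phi \in \mcal{S}_0(\R^d)$, justifies Fubini when computing $\langle \mcal{I}_\alpha(D^s f), \phi\rangle$; transferring onto the Fourier side, the symbols $c_{\alpha,d}|\xi|^{\alpha-d}$ and $|\xi|^{s+a}\hat g$ combine to yield $|\xi|^{t+a}\hat g$, producing the distributional identity
\[
\langle c_{\alpha,d}\mcal{I}_\alpha(D^s f) - D^t f,\, \phi\rangle = 0, \qquad \forall \phi \in \mcal{S}_0(\R^d).
\]

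The annihilator of $\mcal{S}_0(\R^d)$ in $\mcal{S}'(\R^d)$ consists of distributions with Fourier support at the origin, namely polynomials, so the above identity forces $c_{\alpha,d}\mcal{I}_\alpha(D^s f) - D^t f = -P$ for some polynomial $P$. Since $D^t f$ vanishes at infinity (by Riemann--Lebesgue applied to $|\xi|^{t+a}\hat g \in L^1$) while $\mcal{I}_\alpha(D^s f) = O(\langle x\rangle^\alpha)$, the polynomial $P$ is forced to have degree at most $\lfloor s - t \rfloor$. The pointwise identity $D^t f - P = c_{\alpha,d}\mcal{I}_\alpha(D^s f)$ then reduces the desired bound to Proposition \ref{weighted hardy-littlewood-sobolev}, whose hypotheses coincide with the assumed $A_{p,q}^{s-t}$ and $A_\infty$ conditions on $(v,w)$. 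I expect the main obstacle to be the Fubini/Fourier justification in the middle step, which is where the enlarged class $D^{>\tau}\mcal{S}(\R^d)$ introduces subtleties absent when $f \in \mcal{S}_0(\R^d)$ (in which case $P$ is identically zero).
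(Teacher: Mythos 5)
Your proposal is correct and follows essentially the same route as the paper: write $f=D^ag$, use Proposition \ref{prop2.1} to derive the pointwise bound $\I_\alpha(|D^sf|)(x)\lesssim\langle x\rangle^\alpha$, justify Fubini against $\phi\in\mcal{S}_0(\R^d)$ to obtain $c_{\alpha,d}\I_\alpha(D^sf)-D^tf\perp\mcal{S}_0$, identify the difference as a polynomial via Fourier support at the origin, bound its degree by comparing growth rates, and conclude with Proposition \ref{weighted hardy-littlewood-sobolev}. This is precisely the argument in the text, so no further comparison is needed.
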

\begin{remark}\label{remark1}
If \(t>0\) then \(\tau<0\) and so \(D^{>\tau}\mcal{S}(\R^d)\supseteq \mcal{S}(\R^d)\). This means that when \(t>0\) this inequality holds for Schwartz functions. If \(t=0\) then we just barely miss the Schwartz class, but we can recover it if we make an additional assumption on the weights. Indeed, suppose \(v\) is such that \(L^p(v)\subseteq L^p\). Then, if \(D^sf\in L^p(v)\) it would follow that \(\I_\alpha(|D^sf|)\in L^q\) by the boundedness of the Riesz potential operator. In this case we see that
\[
\int \int |\phi(x)||y|^{\alpha-d}|D^sf(x-y)|dydx=\int |\phi(x)|\I_\alpha(|D^sf|)(x)dx\leq \|\phi\|_{L^{q'}}\|\I_\alpha(|D^sf|)\|_{L^q}<\infty,
\]
and therefore all the computations could be carried out the same way to yield inequality \eqref{eq2.5}. Alternatively, we could use the assumption \(\mcal{S}(\R^d)\subseteq L^{q'}(w^{-q'/q})\), for in this case \(\I_\alpha(|D^sf|)\in L^q(w)\) and \(\phi\in L^{q'}(w^{-q'/q})\).
\end{remark}
Note that \(D^sf\in L^2\), so if \(\alpha<d/2\), then by the boundedness of the Riesz potential operator it would follow that \(\I_\alpha(D^sf)\in L^q\), where
\[
\frac{1}{q}+\frac{\alpha}{d}=\frac{1}{2}.
\]
In this case we could conclude that \(P=D^tf-c_{\alpha,d}\I_\alpha(D^sf)\) vanishes at infinity and is therefore zero. However, \(\alpha=d(p^{-1}-q^{-1})\), which can be arbitrarily close to \(d\), and thus in general we cannot use this argument to say that \(P\) must be zero. Under the additional assumption that \(D^sf\in L^p(v)\), we would get \(\I_\alpha(D^sf)\in L^q(w)\), but for general weights this still does not imply that \(P\) must vanish at infinity. Thus, to eliminate this polynomial we need some extra assumption on the weight \(w\). The idea is that \(w\) cannot decay so fast at infinity that a non-zero polynomial could nevertheless still vanish at infinity with respect to the measure \(w(x)dx\). In the next lemma we give a condition that ensures this cannot happen.
\begin{lemma}\label{lemma2.4}
Given \(R>0\) and \(S\subseteq S^{d-1}\) a subset of positive (standard surface) measure of the $d-1$ dimensional unit sphere, define the cone
\[
\Gamma_R^S:=\{x\in \R^d: |x|\geq R\text{ and }\frac{x}{|x|}\in S\}. 
\]
Suppose that \(w\)  satisfies $$w(\Gamma_R^S)=+\infty,$$ 
for any such \(S\subseteq S^{d-1}\) with $|S|>0$, and any $R>0$ (where $w(\Gamma_R^S)=\int_{\Gamma_R^S}w(x)dx$). Then, if a polynomial \(P\) vanishes at infinity with respect to \(w(x)dx\) the polynomial must be zero.
\end{lemma}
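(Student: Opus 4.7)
The plan is to argue by contrapositive: assuming $P$ is a nonzero polynomial, I will exhibit a truncated cone $\Gamma_R^S$ on which $|P|$ is bounded below by a positive constant, which then forces a superlevel set of $|P|$ to have infinite $w$-measure, contradicting the hypothesis that $P$ vanishes at infinity with respect to $w(x)dx$.

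Concretely, decompose $P=P_k+P_{k-1}+\dots+P_0$ into homogeneous components, where $P_k$ is the (nonzero) leading part of degree $k\geq 0$. Since $P_k$ is a nonzero homogeneous polynomial, its restriction to $S^{d-1}$ is a continuous function that is not identically zero; hence there exists $\omega_0\in S^{d-1}$ with $|P_k(\omega_0)|=:2c>0$. By continuity, the set
\[
S:=\{\omega\in S^{d-1}:|P_k(\omega)|>c\}
\]
is a (relatively) open neighborhood of $\omega_0$ in $S^{d-1}$, so in particular $|S|>0$. For $x=r\omega$ with $\omega\in S$ and $r>0$, homogeneity gives $|P_k(x)|=r^k|P_k(\omega)|>cr^k$, while the lower order terms satisfy $|P_j(x)|\leq C_j r^j$ for $j<k$. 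Therefore
\[
|P(x)|\geq cr^k-\sum_{j=0}^{k-1}C_jr^j,
\]
which tends to $+\infty$ as $r\to\infty$ (and equals the constant $|P_0|>0$ when $k=0$). Consequently, one can choose $R>0$ large enough that $|P(x)|>1$ for every $x\in\Gamma_R^S$.

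The final step is then immediate: by the chosen $R$ and $S$,
\[
\Gamma_R^S\subseteq\{x\in\R^d:|P(x)|>1\},
\]
and so by monotonicity of $w(x)dx$ together with the hypothesis,
\[
w(\{|P|>1\})\geq w(\Gamma_R^S)=+\infty.
\]
This contradicts the assumption that $P$ vanishes at infinity with respect to $w(x)dx$, which requires $w(\{|P|>\lambda\})<\infty$ for every $\lambda>0$. Hence $P\equiv 0$. I do not anticipate any real obstacle here; the only point requiring a small amount of care is ensuring that the neighborhood $S\subseteq S^{d-1}$ has positive surface measure (which follows because it is nonempty and open in $S^{d-1}$) and handling the degenerate case $k=0$ (a nonzero constant) separately, which the argument above already absorbs.
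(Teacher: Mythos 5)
Your proof is correct and follows essentially the same strategy as the paper's: isolate the nonzero leading homogeneous part of $P$, use continuity on $S^{d-1}$ to produce a positive-measure set $S$ on which it is bounded below, observe that the lower-order terms are dominated for large $|x|$, and conclude that a superlevel set of $|P|$ contains some $\Gamma_R^S$ and so has infinite $w$-measure. The only cosmetic differences are that you fix $\lambda = 1$ (the paper lets $\lambda$ range) and you fold the constant-polynomial case into the general estimate rather than treating it separately; both choices are harmless.
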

\begin{proof}
Let \(P\) be a non-zero polynomial. If \(P\) is a non-zero constant it is clear that it cannot vanish at infinity with respect to \(w\). So suppose \(P\) has degree \(k\in \N_1\). We can write
\[
P(x)=\sum_{|\alpha|\leq k}c_\alpha x^\alpha=\sum_{|\alpha|<k}c_\alpha x^\alpha +P_h(x),
\]
where \(P_h(x)\) is the homogeneous polynomial
\[
P_h(x)=\sum_{|\alpha|=k}c_\alpha x^\alpha. 
\]
and at least one of the coefficients $c_\alpha$ is not zero. Thus \(P_h\) is not zero. So there exists some point \(x_0\neq 0\) such that \(P_h(x_0)\neq 0\). By homogeneity we may assume that \(|x_0|=1\). By continuity we can find an open ball centered around \(x_0\) where \(P_h\) is never zero. Thus, it is possible to obtain a set \(S\subseteq S^{d-1}\) with \(|S|>0\) such that
\[
m:=\inf_{x\in S}|P_h(x)|>0.
\]
Now, for any \(x\neq 0\) such that \(x/|x|\in S\) we have the estimate
\[
\begin{split}
  |P(x)|&\geq |P_h(x)|-\sum_{|\alpha|<k}|c_\alpha||x|^{|\alpha|}\\
  &\geq m|x|^k-\sum_{|\alpha|<k}|c_\alpha||x|^{|\alpha|}.
\end{split}
\]
Given \(\lambda>0\), we can find \(R\) large enough so that 
\[
|x|\geq R\implies m|x|^k-\sum_{|\alpha|<k}|c_\alpha||x|^{|\alpha|}>\lambda.
\]
Therefore, 
\[
\Gamma_R^S\subseteq \{x: |P(x)|>\lambda\}. 
\]
This way it follows that
\[
w(\{x: |P(x)|>\lambda\})\geq w(\Gamma^S_R)=+\infty,
\]
which shows that \(P\) does not vanish at infinity with respect to \(w(x)dx\).
\end{proof}
\begin{remark}
    Note that, if \(w\) is a radial weight in \(A_\infty\), then it always satisfies the assumption of this lemma. Indeed, \(A_\infty\) weights satisfy an estimate of the form
    \[
    \frac{w(Q)}{w(A)} \gtrsim \left(\frac{|Q|}{|A|}\right)^\delta,
    \]
    for some \(\delta > 0\), where \(Q\) is any cube, and \(A\) is a positive measure subset of \(Q\). If we fix \(A\) and we let \(Q\) increase without bound we see that \(w(\R^d) = +\infty\). If \(w\) is radial, \(w(x) = w_0(|x|)\), this implies that
    \[
    \int_0^\infty w_0(\rho) \rho^{d-1}d\rho = +\infty.
    \]
    Moreover, since \(w\) is locally integrable, we know that also
    \[
    \int_R^\infty w_0(\rho)\rho^{d-1}d\rho = +\infty, \forall R>0.
    \]
    But then,
    \[
    w(\Gamma^S_R) = |S|\int_R^\infty w_0(\rho)\rho^{d-1}d\rho = +\infty. 
    \]
    Thus \(w\) satisfies the assumptions of the lemma. Since the weights we are mostly interested in are \(w(x) = |x|^\gamma\) and \(w(x) = \langle x\rangle^\gamma\), the technical condition of the lemma won't present any major difficulties.
\end{remark}
Using this lemma we may give a version of theorem \ref{thm2.2} with a stronger assumption that removes the polynomial.
\begin{cor}\label{cor2.1}
Let \(1<p\leq q<\infty\) and \(t<s<t+d\). Suppose that \((v,w)\in A_{p,q}^{s-t}\), \(v^{-p'/p}, w\in A_\infty\) and assume further that \(w(\Gamma^S_R)=+\infty\) for any \(S\subseteq S^{d-1}\) with \(|S|>0\) and for all \(R>0\). Then,
\[
\|D^tf\|_{L^q(w)}\lesssim \|D^sf\|_{L^p(v)},\ \forall f\in D^{>\tau}\mcal{S}(\R^d),
\]
where \(\tau=\max\{-d,-t\}\).
\end{cor}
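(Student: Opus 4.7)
The route is to invoke Theorem \ref{thm2.2} to obtain the polynomial $P$, and then use Lemma \ref{lemma2.4}, together with the extra cone hypothesis on $w$, to kill $P$. Concretely, applying Theorem \ref{thm2.2} to $f\in D^{>\tau}\mcal{S}(\R^d)$ produces a polynomial $P$ of degree at most $\lfloor s-t\rfloor$ satisfying
\[
\|D^tf-P\|_{L^q(w)} \lesssim \|D^sf\|_{L^p(v)},
\]
so the corollary reduces to showing $P\equiv 0$.

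By Lemma \ref{lemma2.4}, the hypothesis $w(\Gamma_R^S)=+\infty$ reduces $P\equiv 0$ to the claim that $P$ vanishes at infinity with respect to $w(x)\,dx$. Since $D^tf-P\in L^q(w)$, Chebyshev's inequality gives $w(\{|D^tf-P|>\lambda\})<\infty$ for every $\lambda>0$, i.e.\ $D^tf-P$ vanishes at infinity with respect to $w$. Writing $P=D^tf-(D^tf-P)$, the task is further reduced to proving the same vanishing-at-infinity property for $D^tf$ alone.

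I expect this final step, transferring vanishing-at-infinity from the Lebesgue sense (which is all that Riemann--Lebesgue gives) to the possibly growing weight $w$, to be the only delicate point, and it is handled by exploiting the structure of $D^{>\tau}\mcal{S}(\R^d)$. Writing $f=D^ag$ with $g\in\mcal{S}(\R^d)$ and $a>\tau=\max\{-d,-t\}\geq -t$, one has $t+a>0$, and $D^tf=D^{t+a}g$ whose Fourier symbol $|\xi|^{t+a}\hat g(\xi)$ lies in $L^1(\R^d)$ (integrable near zero because $t+a>0>-d$, and at infinity because $\hat g$ is Schwartz). Hence $D^tf$ is bounded, continuous, and by Riemann--Lebesgue tends to zero as $|x|\to\infty$ in the classical pointwise sense, so every level set $\{|D^tf|>\lambda\}$ is bounded. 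Local integrability of $w$ then gives $w(\{|D^tf|>\lambda\})<\infty$ for each $\lambda>0$, that is, $D^tf$ vanishes at infinity with respect to $w$. Combining this with the previous paragraph, so does $P$, and Lemma \ref{lemma2.4} forces $P\equiv 0$, turning the estimate of Theorem \ref{thm2.2} into the stated conclusion of the corollary.
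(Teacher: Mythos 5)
Your proposal is correct and follows the paper's own argument: apply Theorem \ref{thm2.2} to get the polynomial $P$, note that $D^tf-P\in L^q(w)$ and that $D^tf\to 0$ pointwise (level sets bounded, so of finite $w$-measure since $w$ is locally integrable), conclude that $P$ vanishes at infinity with respect to $w\,dx$, and invoke Lemma \ref{lemma2.4}. You have in fact spelled out more carefully than the paper does the step showing $D^tf$ vanishes at infinity with respect to $w$ (via the $L^1$ symbol $|\xi|^{t+a}\hat g$ and boundedness of level sets), which the paper leaves implicit.
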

\begin{proof}
Given \(f\in D^{>\tau}\mcal{S}(\R^d)\) we know by theorem \ref{thm2.2} that there exists some polynomial \(P\) such that
\[
\|D^tf-P\|_{L^q(w)}\lesssim \|D^sf\|_{L^p(v)}.
\]
So, if \(D^sf\in L^p(v)\), it follows that \(D^tf-P\in L^q(w)\). But \(D^tf\) goes to zero as \(|x|\rightarrow +\infty\) by the Riemann-Lebesgue lemma, and thus \(D^tf\) vanishes at infinity with respect to \(w(x)dx\). This implies that \(P\) vanishes at infinity with respect to \(w(x)dx\) and by lemma \ref{lemma2.4} this means that \(P\) must be zero.
\end{proof}

\subsection{The minimum exponent inequality}

Having considered the case \(\theta=1\) we now turn to the case \(\theta=t/s\). In other words, in this section we are interested in establishing the inequality
\begin{equation}\label{minexpineq}
\|D^tf\|_{L^r(w)}\lesssim \|D^sf\|_{L^p(u)}^{t/s}\|f\|_{L^q(v)}^{1-t/s}.
\end{equation}
We follow the strategy of \cite{Ponce}, lemma 5, and use ideas from complex interpolation, namely the three lines lemma. To be able to apply these ideas we need to first prove the estimate
\[
\|D^{i\tau}f\|_{L^p(w)}\lesssim_\tau \|f\|_{L^p(v)},\ \forall \tau\in \R. 
\]
Moreover, we need to be careful about the exact dependence of the implicit constant on the parameter \(\tau\). Some forms of interpolation, like Stein interpolation, allow for a doubly exponential growth rate of the constant as \(|\tau|\rightarrow +\infty\). The argument we give here works if the constant does not grow faster than exponentially and fortunately, it is possible to show that the dependence is polynomial, and so the simpler argument based on the three lines lemma will suffice. 

Note that the operator \(D^{i\tau}\) is a Fourier multiplier operator associated to a Hörmander multiplier. Much work has been done to extend the unweighted theory of Hörmander multipliers to the case of weighted inequalities. If one wishes to obtain only a one weight inequality then this follows from the work of Kurtz and Wheeden, see \cite{KW}. Obtaining a two-weight version is a more delicate matter, and a recent strategy that has been very fruitful is to use the method of domination by sparse operators. See for instance the paper \cite{Hytonen}, corollary 6.13, where a two weight inequality for Hörmander multipliers is obtained. The main tool used in that paper is Lerner's domination theorem (\cite{Lerner}). Here we will use the more recent version due to Lerner and Ombrosi, in theorem \ref{thm1.2}, so that one reduces the problem to that of showing that the auxiliary maximal operator is of weak type. In paper \cite{Hytonen} it is shown that the maximal operator is weak type \((r,r)\) for \(1<r\leq q\). This leads to assumptions on the weights that involve an \(A_{p,q}^r\) condition which is stronger than the \(A_{p,q}\) condition. Here we are dealing with a much more concrete case, and this will allow us to give a simple argument showing that \(M_\tau\) is weak type \((1, 1)\). In turn, this gives conditions which match more naturally the conditions from the previous section.  

The starting point is a well-known identity involving the Fourier transform of the kernel of \(D^{i\tau}\). We may write \(D^{i\tau}f=(|\cdot|^{i\tau}\hat{f})^\vee=(|\cdot|^{i\tau})^\vee * f= W_\tau*f\), where \(W_{\tau}=(|\cdot|^{i\tau})^\vee\in \mcal{S}'(\R^d)\). Now define
\[
\langle V_\tau,\phi\rangle:=\lim_{n}\int_{|x|>\delta_n}\frac{\phi(x)}{|x|^{d+i\tau}}dx.
\]
If \(\tau\neq 0\) and \(\delta_n=e^{-2\pi n/|\tau|}\), then this gives a well-defined tempered distribution. From \cite{Duo}, section 2 of chapter 5, or \cite{CGrafakos}, section 5.4.2, we have that
\begin{equation}\label{eq2.6}
W_\tau=\alpha_d(\tau)V_\tau + \beta_d(\tau)\delta,
\end{equation}
where
\[
\alpha_d(\tau)=\pi^{-\frac{d}{2}-i\tau}\frac{\Gamma\left(\frac{d+i\tau}{2}\right)}{\Gamma\left(-i\frac{\tau}{2}\right)}\text{ and }\beta_d(\tau)=i\frac{d|B_1|}{\tau}\alpha_d(\tau).
\]
As was mentioned above we must be careful about the order of growth of these constants as \(|\tau|\rightarrow +\infty\). To do this we use some standard facts about the Gamma function. If \(m\in \N_1\) and \(y\in \R\setminus \{0\}\), then
\begin{align*}
    |\Gamma(iy)|^2&=\frac{\pi}{y\sinh(\pi y)}\\
    |\Gamma(m+iy)|^2&=\frac{\pi y}{\sinh(\pi y)}\prod_{k=1}^{m-1}(k^2+y^2)\\
    |\Gamma\left(m-1/2+iy\right)|^2&=\frac{\pi}{\cosh(\pi y)}\prod_{k=1}^{m-1}[(k-1/2)^2+y^2].
\end{align*}
Now consider the case \(d=2m, m\in \N_1\). Using these identities we see that
\[
\begin{split}
    \frac{|\Gamma(m+i\tau/2)|}{|\Gamma(-i\tau/2)|}&=\frac{|\tau|}{2}\sqrt{1+\frac{\tau^2}{4}}\sqrt{2^2+\frac{\tau^2}{4}}\dots\sqrt{(m-1)^2+\frac{\tau^2}{4}}\\
    &\leq\frac{|\tau|}{2}\left(1+\frac{|\tau|}{2}\right)\dots \left(m-1+\frac{|\tau|}{2}\right),
\end{split}
\]
and the right hand side is a polynomial of \(|\tau|\) which has nonnegative coefficients and degree equal to \(m=\lceil d/2\rceil\). Now suppose \(d=2m-1, m\in \N_1\). Then we have that
\[
\begin{split}
    \frac{|\Gamma(m-1/2+i\tau/2)|}{|\Gamma(-i\tau/2)|}&=\sqrt{|\tanh(\pi\tau/2)||\tau|/2}\prod_{k=1}^{m-1}\sqrt{(k-1/2)^2+\frac{\tau^2}{4}}\\
    &\leq \pi^{\frac{1}{2}}\frac{|\tau|}{2}\prod_{k=1}^{m-1}\left[k-\frac{1}{2}+\frac{|\tau|}{2}\right].
\end{split}
\]
The right hand side is a polynomial of \(|\tau|\) which has nonnegative coefficients and degree equal to \(m=\lceil d/2\rceil\). This shows that there is a polynomial \(P_d\) with degree \(\lceil d/2\rceil\) and nonnegative coefficients such that
\[
|\alpha_d(\tau)|\leq P_d(|\tau|).
\]
Also, note that the polynomials we obtained are divisible by \(|\tau|\), which means we can also bound \(|\beta_d(\tau)|\) above by a polynomial, this time of degree \(\lceil d/2\rceil -1 \). 

Returning to identity \eqref{eq2.6}, and convolving with \(f\) we get
\begin{equation}\label{eq2.7}
D^{i\tau}f=\alpha_d(\tau)T_\tau f+\beta_d(\tau)f,\ \tau\in \R\setminus\{0\},\ f\in \mcal{S}(\R^d),
\end{equation}
where \(T_\tau\) is the operator defined by \(T_\tau f=V_\tau*f\). We may write
\[
T_\tau f(x)=\lim_{n}\int_{|y|>\delta_n}\frac{1}{|y|^{d+i\tau}}f(x-y)dy,
\]
so the kernel of this operator is \(K_\tau(x):=|x|^{-d-i\tau}\). It is not difficult to check that this kernel satisfies
\[
|K_\tau(x)|\leq \frac{1}{|x|^d}\text{ and }|K_\tau(x-y)-K_\tau(x)|\leq 2^{d+2}(d+|\tau|)\frac{|y|}{|x|^{d+1}},\text{ when }|x|\geq 2|y|.
\]
Therefore, the operator \(T_\tau\) is associated to a standard kernel. Moreover, since it is clear that \(D^{i\tau}\) is bounded on \(L^2\), then from identity \eqref{eq2.7} we conclude also that \(T_\tau\) is bounded on \(L^2\), since \(\alpha_d(\tau)\neq 0,\forall \tau\neq 0\). Thus, \(T_\tau\) is a Calderón-Zygmund operator. Then by standard Calderón-Zygmund theory,\footnote{See for instance theorem 5.3.3 from \cite{CGrafakos} and section 4 of Chapter 5 of \cite{Duo}.} \(T_\tau\) has an extension to \(L^1\) that maps it to \(L^{1,\infty}\) such that
\[
\|T_\tau\|_{L^1\rightarrow L^{1,\infty}}\lesssim_d1+|\tau|+\frac{1+|\beta_d(\tau)|}{|\alpha_d(\tau)|},
\]
and
\[
T_\tau f(x)=\lim_n\int_{|y|\geq \delta_n}K_\tau(y)f(x-y)dy\text{ for a.e. }x\in \R^d,\forall f\in L^1(\R^d).
\]
In particular, this means that \(D^{i\tau}\) also extends to \(L^1\) and 
\[
\|D^{i\tau}\|_{L^1\rightarrow L^{1,\infty}}\lesssim_d|\alpha_d(\tau)|+|\tau||\alpha_d(\tau)|+1+2|\beta_d(\tau)|.
\]
We have seen above that both \(|\alpha_d(\tau)|\) and \(|\beta_d(\tau)|\) can be controlled by polynomials, and therefore we see that there is a polynomial \(P_d^{1}\) with nonnegative coefficients depending only on \(d\), and with degree \(\lceil d/2\rceil +1\), such that
\begin{equation}\label{eq2.8}
\|D^{i\tau}\|_{L^1\rightarrow L^{1,\infty}}\leq P_d^{1}(|\tau|),\ \forall \tau\in \R.
\end{equation}
We proved this for \(\tau\neq 0\), but of course it still works for \(\tau=0\) for in that case the left hand side is bounded by one. Now we define the maximal operator associated with \(D^{i\tau}\) as 
\[
M_\tau f(x)=\sup_{Q\ni x}\sup_{x',x''\in Q}|D^{i\tau}(f\chi_{\R^d\setminus Q^*})(x')-D^{i\tau}(f\chi_{\R^d\setminus Q^*})(x'')|,
\]
while the maximal operator associated with \(T_\tau\) will be denoted by \(\mathcal{M}_\tau \), i.e.
\[
\mathcal{M}_\tau f(x)=\sup_{Q\ni x}\sup_{x',x''\in Q}|T_\tau(f\chi_{\R^d\setminus Q^*})(x')-T_\tau(f\chi_{\R^d\setminus Q^*})(x'')|.
\]
From \eqref{eq2.7} we know that \(M_\tau f(x)=|\alpha_d(\tau)|\mathcal{M}_\tau f(x)\), so it suffices to estimate \(\mathcal{M}_\tau\).
\begin{lemma}\label{lemma2.6}
We have the estimate
\[
\|\mathcal{M}_\tau \|_{L^1\rightarrow L^{1,\infty}}\lesssim_d 1+|\tau|,\ \tau\neq 0.
\]
\end{lemma}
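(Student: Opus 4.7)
The plan is to pointwise dominate $\mathcal{M}_\tau f(x)$ by a multiple of the Hardy--Littlewood maximal function $Mf(x)$, with an implicit constant of order $1+|\tau|$, and then invoke the weak type $(1,1)$ boundedness of $M$. The $(1+|\tau|)$ factor will come directly from the smoothness estimate on the kernel stated just before the lemma.

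Fix a cube $Q$ with $x\in Q$ and points $x',x''\in Q$. For $y\notin Q^{*}$ (the cube with side length $5\sqrt{d}\,\ell(Q)$ centered at the center of $Q$), one has $|x''-y|\geq 2\sqrt{d}\,\ell(Q)\geq 2|x'-x''|$ by a direct comparison of $|y-c_Q|$ with $|c_Q-x''|$ and $|x'-x''|\leq \sqrt{d}\,\ell(Q)$. Therefore the smoothness condition
\[
|K_\tau(x'-y)-K_\tau(x''-y)|\leq 2^{d+2}(d+|\tau|)\,\frac{|x'-x''|}{|x''-y|^{d+1}}
\]
applies, and so
\[
|T_\tau(f\chi_{\R^d\setminus Q^{*}})(x')-T_\tau(f\chi_{\R^d\setminus Q^{*}})(x'')|\lesssim_d (1+|\tau|)\,\ell(Q)\int_{\R^d\setminus Q^{*}}\frac{|f(y)|}{|x''-y|^{d+1}}\,dy.
\]

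Now I would split the last integral into dyadic shells around $x''$ of radii $\sim 2^{k}\ell(Q)$, $k\geq 0$. On each shell $|x''-y|^{-(d+1)}\lesssim (2^k\ell(Q))^{-(d+1)}$, and the integral of $|f|$ over the corresponding ball is bounded by $c(2^k\ell(Q))^d\, Mf(x'')$. Summing the geometric series in $k$ gives
\[
\int_{\R^d\setminus Q^{*}}\frac{|f(y)|}{|x''-y|^{d+1}}\,dy\lesssim_d \frac{1}{\ell(Q)}\,Mf(x'').
\]
Since $x,x''\in Q$, the maximal functions at these points are comparable (any ball around $x''$ is contained in a ball of comparable radius around $x$), so $Mf(x'')\lesssim_d Mf(x)$. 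Combining everything and taking suprema over $Q\ni x$ and over $x',x''\in Q$ yields the pointwise bound
\[
\mathcal{M}_\tau f(x)\lesssim_d (1+|\tau|)\,Mf(x),\qquad x\in\R^d.
\]

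The conclusion then follows from the classical weak type $(1,1)$ bound for the Hardy--Littlewood maximal operator. The only slightly delicate point is tracking the constants: one must ensure that the geometric factors coming from $Q^{*}$ being the $5\sqrt{d}$-enlargement of $Q$ produce a dimensional constant, and that the linear dependence on $|\tau|$ from the kernel smoothness survives the summation in $k$ (it does, because the $k$-summation is purely dimensional). This makes the dependence on $\tau$ transparently linear, as required to then feed into Theorem~\ref{thm1.2} with controlled growth in $\tau$.
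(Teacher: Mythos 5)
Your proof is correct and follows essentially the same route as the paper: both use the kernel smoothness bound (giving the factor $d+|\tau|$), a dyadic decomposition of $\R^d\setminus Q^*$, summation of a geometric series to produce the Hardy--Littlewood maximal function, and then the classical weak $(1,1)$ bound for $M$. The only minor difference is that you explicitly verify the applicability condition $|x''-y|\geq 2|x'-x''|$ on $\R^d\setminus Q^*$ and pass from $Mf(x'')$ to $Mf(x)$ as a separate step, whereas the paper folds both into the dyadic-shell estimate directly.
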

\begin{proof}
This follows from the fact that \(T_\tau\) is a Calderón-Zygmund operator. See for instance \cite{Lerner}. For completeness we include the short proof. Let \(f\in L^1(\R^d)\) and \(x\in \R^d\). Fix a cube \(Q\ni x\) with side length \(l\) and take \(x',x''\in Q\). We have that
\[
\begin{split}
  |T_\tau(f\chi_{\R^d\setminus Q^*})(x')&-T_\tau(f\chi_{\R^d\setminus Q^*})(x'')|\leq \int_{\R^d\setminus Q^*}|K_\tau(x'-y)-K_\tau(x''-y)||f(y)|dy\\
  &\leq \sum_{k\geq 0}\int_{2^{k+1}Q^*\setminus 2^kQ^*}2^{d+2}(d+|\tau|)\frac{|x'-x''|}{|x''-y|^{d+1}}|f(y)|dy\\
  &\lesssim_d(1+|\tau|)\text{diam}(Q)\sum_{k\geq 0}\frac{(5\sqrt{d}2^{k+1}l)^d}{l^{d+1}2^{k(d+1)}}\frac{1}{|2^{k+1}Q^*|}\int_{2^{k+1}Q^*}|f(y)|dy\\
  &\lesssim_d(1+|\tau|)\sum_{k\geq 0}\frac{1}{2^k}Mf(x)\lesssim_d(1+|\tau|)Mf(x).
\end{split}
\]
This shows that 
\[
\mathcal{M}_\tau f(x)\lesssim_d(1+|\tau|)Mf(x)\text{ for a.e. }x.
\]
Therefore,
\[
\|\mathcal{M}_\tau f\|_{L^{1,\infty}}\lesssim_d(1+|\tau|)\|Mf\|_{L^{1,\infty}}\lesssim_d(1+|\tau|)\|f\|_{L^1}.
\]
\end{proof}
From this lemma we see that there exists a polynomial \(P_d^{2}\) with nonnegative coefficients that depend only on \(d\), and with degree \(\lceil d/2\rceil+1\) such that 
\begin{equation}\label{eq2.9}
\|M_\tau\|_{L^1\rightarrow L^{1,\infty}}\leq P_d^{2}(|\tau|),\ \forall \tau\in \R. 
\end{equation}
Again, we proved this for the case \(\tau\neq 0\), but for \(\tau=0\) the left hand side is zero so that case is trivial. The next result now follows immediately from \eqref{eq2.8}, \eqref{eq2.9} and Lerner's Domination theorem \ref{thm1.2}.
\begin{prop}\label{prop2.4}
For every compactly supported \(f\in L^1(\R^d)\), there exists a \((2\cdot 5^d\sqrt{d}^d)^{-1}\)-sparse family \(\mcal{S}=\mcal{S}(f)\) such that 
\[
|D^{i\tau}f(x)|\leq P_d(|\tau|)\mcal{A}_\mcal{S}f(x)\text{ for a.e. }x\in \R^d,\footnote{We use the notation \(\mcal{A}_\mcal{S}\) for \(\mcal{A}_{1,\mcal{S}}\).}
\]
where \(P_d\) is a polynomial with nonnegative coefficients that depend only on \(d\), and with degree equal to \(\lceil d/2\rceil +1\).
\end{prop}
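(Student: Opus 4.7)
The plan is to recognize that Proposition 2.4 is essentially an immediate application of Lerner--Ombrosi's Theorem \ref{thm1.2} to $T = D^{i\tau}$, once the two quantitative weak-type bounds \eqref{eq2.8} and \eqref{eq2.9} are in hand. All the genuine work has been done in the preceding pages; what remains is to check that the hypotheses of Theorem \ref{thm1.2} are met with $q = r = 1$ and to track the constant explicitly in $|\tau|$.

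More concretely, first I would apply Theorem \ref{thm1.2} with $T = D^{i\tau}$ and with the choice $q = r = 1$. Sublinearity is immediate since $D^{i\tau}$ is a convolution operator. The hypothesis that $T$ is of weak type $(q,q) = (1,1)$ is precisely \eqref{eq2.8}, which gives $\|D^{i\tau}\|_{L^1 \to L^{1,\infty}} \leq P_d^{1}(|\tau|)$. The hypothesis that the associated maximal operator $M_T = M_\tau$ is of weak type $(r,r) = (1,1)$ is precisely \eqref{eq2.9}, which gives $\|M_\tau\|_{L^1 \to L^{1,\infty}} \leq P_d^{2}(|\tau|)$. Therefore Theorem \ref{thm1.2} applies and produces, for every compactly supported $f \in L^1(\R^d)$, a $(2\cdot 5^d \sqrt{d}^d)^{-1}$-sparse family $\mcal{S} = \mcal{S}(f)$ such that
\[
|D^{i\tau}f(x)| \lesssim_d \bigl(\|D^{i\tau}\|_{L^1 \to L^{1,\infty}} + \|M_\tau\|_{L^1 \to L^{1,\infty}}\bigr) \mcal{A}_{1,\mcal{S}} f(x)
\]
for almost every $x \in \R^d$.

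Next, combining the two bounds I would set
\[
P_d(|\tau|) := C_d \bigl(P_d^{1}(|\tau|) + P_d^{2}(|\tau|)\bigr),
\]
where $C_d$ absorbs the implicit constant from Theorem \ref{thm1.2}. Since $P_d^{1}$ and $P_d^{2}$ both have nonnegative coefficients and degree $\lceil d/2 \rceil + 1$, the sum $P_d$ is a polynomial with nonnegative coefficients of degree exactly $\lceil d/2 \rceil + 1$ (the leading coefficients do not cancel because they are positive). This yields the desired pointwise bound $|D^{i\tau}f(x)| \leq P_d(|\tau|) \mcal{A}_{\mcal{S}}f(x)$ almost everywhere, using the convention $\mcal{A}_{\mcal{S}} = \mcal{A}_{1,\mcal{S}}$.

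There is essentially no obstacle left at this stage; the substantive step --- establishing \eqref{eq2.8} and \eqref{eq2.9} with polynomial control in $|\tau|$ via Calder\'on--Zygmund theory applied to the kernel $K_\tau(x) = |x|^{-d-i\tau}$ and the identity \eqref{eq2.7} --- has already been carried out in Lemma \ref{lemma2.6} and the surrounding discussion. The only mild point worth remarking is that the sparseness constant $(2\cdot 5^d \sqrt{d}^d)^{-1}$ comes directly from Theorem \ref{thm1.2} and is independent of $\tau$, so the entire $\tau$-dependence of the domination is confined to the polynomial factor $P_d(|\tau|)$, as required for the later application of the three lines lemma in the minimum exponent inequality.
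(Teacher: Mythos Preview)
Your proposal is correct and follows exactly the same approach as the paper, which simply states that the result follows immediately from \eqref{eq2.8}, \eqref{eq2.9} and Theorem \ref{thm1.2}. You have merely made explicit the verification of hypotheses and the definition of $P_d$ that the paper leaves implicit.
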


We now wish to apply theorem \ref{bound for sparse operators}. To do this we note that, given any sparse family of cubes \(\S\), there are dyadic lattices \(\mcal{D}^1,\dots, \mcal{D}^{3^d}\) and sparse families \(\mcal{S}_j=\mcal{S}_j(f)\subseteq \mcal{D}^j\) such that
\[
\mcal{A}_\mcal{S}^\alpha f(x)\lesssim_{\alpha, d} \sum_{j=1}^{3^d}\mcal{A}_{\mcal{S}_j}^\alpha f(x).
\]
This is remark 4.3 in \cite{Lerner}. We now finally obtain the desired result.
\begin{thm}\label{thm2.3}
If \(1<p<\infty\), \((v,w)\in A_{p,p}\) and \(v^{-p'/p},w\in A_\infty\), then
\[
\|D^{i\tau}f\|_{L^p(w)}\lesssim_{p,v,w} P_d(|\tau|)\|f\|_{L^p(v)},\ \forall f\in D^{\geq 0}\mcal{S}(\R^d),\ \forall \tau\in \R,
\]
where \(P_d\) is a polynomial with nonnegative coefficients that depend only on \(d\), and with degree equal to \(\lceil d/2\rceil +1\).
\end{thm}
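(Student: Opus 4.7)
The plan is to combine the sparse domination from Proposition \ref{prop2.4} with the two-weight bound for sparse operators (Theorem \ref{bound for sparse operators}) in the simplest case $p=q$ and $\alpha=0$. A direct computation shows that the $A_{p,p}^0$ condition reduces precisely to the $A_{p,p}$ condition assumed in the hypotheses (the exponents $-1+1/p'+1/p$ of $|Q|$ cancel), and the sparse operator $\mcal{A}_\mcal{S}=\mcal{A}_{1,\mcal{S}}$ appearing in Proposition \ref{prop2.4} coincides with $\mcal{A}_\mcal{S}^0$, so Theorem \ref{bound for sparse operators} is directly applicable.

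The main obstacle is that Proposition \ref{prop2.4} requires a compactly supported function in $L^1(\R^d)$, whereas elements of $D^{\geq 0}\mcal{S}(\R^d)$ are in general not compactly supported. I would get around this with a truncation argument: pick $\phi\in C^\infty_c(\R^d)$ with $\phi=1$ on $B_1$ and $0\leq \phi\leq 1$, set $\phi_R(x)=\phi(x/R)$ and $f_R=f\phi_R$. Since $f$ is bounded (as noted after the definition of $D^{\geq 0}\mcal{S}(\R^d)$), $f_R$ is compactly supported and belongs to $L^1(\R^d)$. Proposition \ref{prop2.4} then yields a sparse family $\mcal{S}_R$ with $|D^{i\tau}f_R(x)|\leq P_d(|\tau|)\mcal{A}_{\mcal{S}_R}f_R(x)$ for a.e.\ $x$. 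Using the decomposition of a general sparse family into at most $3^d$ dyadic sparse families (remark 4.3 in \cite{Lerner}) and then applying Theorem \ref{bound for sparse operators} to each dyadic piece, I obtain
\[
\|D^{i\tau}f_R\|_{L^p(w)}\lesssim_{p,v,w,d} P_d(|\tau|)\|f_R\|_{L^p(v)}\leq P_d(|\tau|)\|f\|_{L^p(v)},
\]
where the final inequality uses $0\leq \phi_R\leq 1$.

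To finish, I pass to the limit $R\to\infty$. Since $D^{\geq 0}\mcal{S}(\R^d)\subseteq H^s(\R^d)$ for every $s\in \R$, in particular $f\in L^2(\R^d)$, and dominated convergence gives $f_R\to f$ in $L^2(\R^d)$. The operator $D^{i\tau}$ is a Fourier multiplier with unimodular symbol $|\xi|^{i\tau}$, hence an isometry on $L^2(\R^d)$ by Plancherel, so $D^{i\tau}f_R\to D^{i\tau}f$ in $L^2(\R^d)$. Extracting a subsequence $R_k$ along which convergence is pointwise a.e.\ and applying Fatou's lemma gives
\[
\|D^{i\tau}f\|_{L^p(w)}\leq \liminf_{k}\|D^{i\tau}f_{R_k}\|_{L^p(w)}\lesssim_{p,v,w,d} P_d(|\tau|)\|f\|_{L^p(v)},
\]
which is the desired estimate. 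The polynomial $P_d$ can be taken to be the one produced by Proposition \ref{prop2.4}, so the degree $\lceil d/2\rceil+1$ is preserved; the only new dependence on $p$, $v$, $w$ enters through the constant from Theorem \ref{bound for sparse operators}.
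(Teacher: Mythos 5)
Your proof is correct and follows essentially the same route as the paper: sparse domination from Proposition \ref{prop2.4}, reduction to dyadic sparse families, the two-weight bound of Theorem \ref{bound for sparse operators} with $\alpha=0$ and $p=q$, and a truncation-plus-Fatou limiting argument using $L^2$ convergence along a subsequence. The only cosmetic difference is that you use a smooth cutoff $f\phi_R$ where the paper uses the sharp truncation $f\chi_{B(0,N)}$; both serve equally well since all that is needed is a compactly supported $L^1$ function converging to $f$ in $L^2$.
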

\begin{proof}
Let \(f\in D^{\geq 0}\mcal{S}(\R^d)\) and put \(f_N=f\chi_{B(0,N)}\). From what we've seen above there are sparse families \(\mcal{S}_j=\mcal{S}_j(f_N)\) coming from dyadic lattices, such that 
\[
|D^{i\tau}f_N(x)|\leq P_d(|\tau|)\sum_{j=1}^{3^d}\mcal{A}_{\mcal{S}_j}f_N(x)\text{ for a.e. }x\in \R^d.
\]
By theorem \ref{bound for sparse operators} it follows that 
\[
\|D^{i\tau}f_N\|_{L^p(w)}\leq P_d(|\tau|)\sum_{j=1}^{3^d}\|\mcal{A}_{\mcal{S}_j}f_N\|_{L^p(w)}\lesssim P_d(|\tau|)\sum_{j=1}^{3^d}\|f_N\|_{L^p(v)}\lesssim P_d(|\tau|)\|f\|_{L^p(v)}.
\]
Now, since \(f_N\rightarrow_Nf\) in \(L^2\), then \(D^{i\tau}f_N\rightarrow_ND^{i\tau}f\) in \(L^2\). Therefore we can pick a subsequence \((N_k)_k\) such that \(D^{i\tau}f_{N_k}(x)\rightarrow_kD^{i\tau}f(x)\) for a.e. \(x\). Then, by Fatou's lemma,
\[
\|D^{i\tau}f\|_{L^p(w)}\leq \liminf_k\|D^{i\tau}f_{N_k}\|_{L^p(w)}\lesssim_{p,v,w}P_d(|\tau|)\|f\|_{L^p(v)}.
\]
\end{proof}
Now that we have established theorem \ref{thm2.3} we can prove the minimum exponent inequality \eqref{minexpineq}.
\begin{thm}\label{thm2.4}
Let \(1<p,q,r<\infty\) and \(0<t<s\) be such that
\[
\frac{1}{r}=\frac{t}{s}\frac{1}{p}+\left(1-\frac{t}{s}\right)\frac{1}{q}.
\]
Suppose we have a family of weights, \(\{w_z\}_{z\in \overline{S}}\), where \(S = \{z\in \C: 0< \Re(z)<1\}\), that satisfies the following properties:
\begin{enumerate}
    \item There is some \(h\in L^1_\text{loc}(\R^d)\) such that \(|w_z(x)|\leq h(x), \forall x,\forall z\);
    \item For each \(x\in \R^d\) the function \(z\mapsto w_z(x)\) is continuous in \(\overline{S}\) and analytic in \(S\);
    \item There are weights \(w_0, w_1\), such that \(|w_{i\tau}(x)|\leq w_0(x)\) and \(|w_{1+i\tau}(x)|\leq w_1(x)\) uniformly on \(\tau\).
\end{enumerate}
Moreover, assume that \(u, v, w\) are weights such that \((u^p, w_1^p)\in A_{p, p}, (v^q, w_0^q)\in A_{q, q}, w = w_{t/s},\) and \(u^{-p'}, w_1^p, v^{-q'}, w_0^q\in A_\infty\). Then, 
\[
\|wD^tf\|_{L^r}\lesssim \|uD^sf\|_{L^p}^{t/s}\|vf\|_{L^q}^{1-t/s},\ \forall f\in \mcal{S}(\R^d).
\]
\end{thm}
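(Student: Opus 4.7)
The plan is to follow Ponce's complex-interpolation argument, applying the three lines lemma to an analytic family built from an interpolated test function and the imaginary powers $D^{is\tau}$. Setting $\theta = t/s \in (0,1)$ one checks the dual relation $\frac{1}{r'} = \frac{\theta}{p'} + \frac{1-\theta}{q'}$. By duality, it suffices to fix a nonnegative simple function $g$ of compact support with $\|g\|_{L^{r'}} \le 1$ and prove
\[
\left| \int_{\R^d} w(x) D^t f(x) g(x) \, dx \right| \lesssim \|u D^s f\|_{L^p}^\theta \|v f\|_{L^q}^{1-\theta}.
\]

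To invoke complex interpolation, I introduce the analytic family of test functions
\[
g_z(x) := g(x)^{\frac{r'}{p'} z + \frac{r'}{q'}(1-z)} \qquad \text{(equal to } 0 \text{ where } g = 0\text{),}
\]
which satisfies $g_\theta = g$, $\|g_{1+i\tau}\|_{L^{p'}} = \|g\|_{L^{r'}}^{r'/p'} \le 1$, and $\|g_{i\tau}\|_{L^{q'}} = \|g\|_{L^{r'}}^{r'/q'} \le 1$ uniformly in $\tau$. I then set
\[
B(z) := \int_{\R^d} w_z(x)\, D^{sz} f(x)\, g_z(x)\, dx, \qquad z \in \overline{S}.
\]
Since $f \in \mcal{S}(\R^d)$, one has $\|D^{sz}f\|_{L^\infty} \le \int (1+|\xi|^s)\,|\hat f(\xi)|\,d\xi < \infty$ uniformly for $\Re z \in [0,1]$, and $z \mapsto D^{sz}f(x)$ is entire. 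Combining this with hypotheses (1)--(2) on $\{w_z\}$ and the compact support of $g$ (which makes $g_z$ a finite sum of bounded functions with entire dependence on $z$), one verifies that $B$ is analytic on $S$, continuous and uniformly bounded on $\overline S$, and $B(\theta) = \int w D^t f\cdot g\, dx$, which is what I want to estimate.

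Next I bound $B$ on the two boundary lines. On $\Re z = 1$, using $|w_{1+i\tau}| \le w_1$ and Hölder's inequality,
\[
|B(1+i\tau)| \le \|w_1 D^{is\tau}(D^s f)\|_{L^p}\, \|g_{1+i\tau}\|_{L^{p'}}.
\]
Because $D^s f \in D^{\ge 0}\mcal{S}(\R^d)$ and $(u^p, w_1^p) \in A_{p,p}$ with $u^{-p'}, w_1^p \in A_\infty$, Theorem \ref{thm2.3} applied with parameter $s\tau$ yields
\[
\|w_1 D^{is\tau}(D^s f)\|_{L^p} \lesssim P_d(|s\tau|)\,\|u D^s f\|_{L^p},
\]
so $|B(1+i\tau)| \lesssim P_d(|s\tau|)\, \|u D^s f\|_{L^p}$. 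The symmetric argument on $\Re z = 0$, based on the pair $(v^q, w_0^q) \in A_{q,q}$ and $f \in D^{\ge 0}\mcal{S}(\R^d)$, produces $|B(i\tau)| \lesssim P_d(|s\tau|)\, \|v f\|_{L^q}$.

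To conclude I apply the Poisson-kernel form of the three lines lemma in the strip $S$: for any bounded analytic $B$ on $\overline S$,
\[
\log|B(\theta)| \le \int_\R \omega_0(\theta, y)\log|B(iy)|\,dy + \int_\R \omega_1(\theta, y)\log|B(1+iy)|\,dy,
\]
where the Poisson kernels satisfy $\int \omega_0 = 1-\theta$, $\int\omega_1 = \theta$ and decay exponentially in $|y|$. Since $\log P_d(|sy|) = O(\log(1+|y|))$, it is integrable against the $\omega_j$, and its contributions combine into a constant depending only on $d,s,\theta$. Together with the $\|u D^s f\|_{L^p}$ and $\|v f\|_{L^q}$ factors, this gives $|B(\theta)| \le C\,\|u D^s f\|_{L^p}^\theta\, \|v f\|_{L^q}^{1-\theta}$, as desired. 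The main technical obstacle is a careful verification that the hypotheses of Theorem \ref{thm2.3} apply to the imaginary powers $D^{is\tau}(D^s f)$ and $D^{is\tau}f$ and that $B$ is uniformly bounded and analytic on $\overline S$; once these are in place, the three-lines argument is routine. A convenient alternative to the Poisson-kernel formulation is to regularize $B_\delta(z) := e^{\delta z(z-1)}B(z)$ so that $|B_\delta|$ decays in $|\Im z|$, apply the classical three-lines lemma, and then pass to $\delta \to 0^+$ balancing the polynomial boundary blow-up with the Gaussian decay.
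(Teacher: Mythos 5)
Your proof is essentially the paper's: build the analytic family $z\mapsto \int w_z\, D^{sz}f\, g_z\, dx$, estimate the two vertical lines by applying Theorem~\ref{thm2.3} to $D^{is\tau}(D^sf)$ and $D^{is\tau}f$ (both in $D^{\geq 0}\mathcal{S}(\R^d)$), and interpolate via the three-lines lemma; the boundedness, continuity, and analyticity verifications also parallel the paper's. The only genuine difference is the device used to absorb the polynomial growth $P_d(|s\tau|)$ in the boundary bounds: you invoke the Poisson-kernel (logarithmic) form of the three-lines lemma, which is valid since $\log P_d(|s\tau|) = O(\log(1+|\tau|))$ is integrable against the strip's Poisson kernels, while the paper multiplies the auxiliary function by the \emph{fixed} Gaussian factor $e^{z^2-1}$, so that $|e^{z^2-1}|P_d(|s\tau|)\lesssim e^{-\tau^2}P_d(|s\tau|)\lesssim 1$ on both boundary lines and the classical three-lines lemma applies directly; both devices work. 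However, your fallback suggestion of regularizing by $B_\delta(z)=e^{\delta z(z-1)}B(z)$ and passing to $\delta\to 0^+$ does not work as stated: the three-lines bound for $B_\delta$ has boundary suprema involving $\sup_\tau e^{-\delta\tau^2}P_d(|s\tau|)$, which blows up as $\delta\to 0^+$ (for $P_d(t)\sim t^k$ the maximum is of order $\delta^{-k/2}$), so there is no useful limit to take. The correct way to use a Gaussian factor is precisely the paper's: keep $\delta$ fixed (any $\delta>0$, e.g.\ $\delta=1$), and absorb the harmless constant $e^{\delta\theta(\theta-1)}$ into the implicit constant upon evaluating at $z=\theta=t/s$.
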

\begin{proof}
Let \(g\in C^\infty_c(\R^d)\) with \(\|g\|_{L^{r'}}=1\), let \(f\in \mcal{S}(\R^d)\) and put 
\[
F(z)=e^{z^2-1}\int_{\R^d}D^{zs}f|g|^{P(z)}\text{sgn}(g)w_z(x)dx,
\]
where
\[
P(z)=\frac{r'}{p'}z+\frac{r'}{q'}(1-z).
\]
We need to show that \(F\) is bounded and continuous in \(\overline{S}\), and analytic in \(S\). We begin by showing that \(F\) is bounded. First note that 
\[
\|D^{zs}f\|_{L^\infty}\leq \int |\xi|^{\mbox{Re}(zs)}|\hat{f}(\xi)|d\xi\leq \|\hat{f}\|_{L^1}+\||\xi|^s\hat{f}\|_{L^1}=:A<\infty,
\]
with \(A\) independent of \(z\). So, we have that
\[
|F(z)|\leq A\int_{\supp(g)}|g|^{\frac{r'}{p'}\mbox{Re}(z)+\frac{r'}{q'}(1-\mbox{Re}(z))}|w_z(x)|dx\leq A\int_{\supp(g)}(1+|g|^{\frac{r'}{p'}+\frac{r'}{q'}})h(x)dx<\infty .
\]
This shows that \(F\) is bounded on \(\overline{S}\). Now recall that we may write
\[
D^{zs}f(x)=\int_{\xi\neq 0}e^{2\pi i x\cdot \xi}|\xi|^{zs}\hat{f}(\xi)d\xi 
\]
Since \(|e^{2\pi i x\cdot \xi}|\xi|^{zs}\hat{f}(\xi)|\leq (1+|\xi|^s)|\hat{f}(\xi)|\in L^1\) and \(|\xi|^{zs}\) is continuous in \(z\), it follows by the dominated convergence theorem that \(D^{zs}f(x)\) is continuous in \(z\) on \(\overline{S}\) for each fixed \(x\). Moreover, the estimate \(|e^{2\pi i x\cdot \xi}|\xi|^{zs}\hat{f}(\xi)|\leq (1+|\xi|^s)|\hat{f}(\xi)|\in L^1\) shows that we may apply Fubini to obtain
\[
\oint_\gamma D^{zs}f(x)dz=\oint_\gamma \int_{\xi\neq 0}e^{2\pi i x\cdot \xi}|\xi|^{zs}\hat{f}(\xi)d\xi dz=\int_{\xi\neq 0} e^{2\pi i x\cdot \xi}\hat{f}(\xi)\oint_\gamma |\xi|^{zs}dzd\xi=0,
\]
where \(\gamma\) is any closed \(C^1\) curve in \(S\). By Morera's theorem it follows that \(D^{zs}f(x)\) is analytic in \(z\) on \(S\). Now we argue similarly for \(F\). First note that from the estimate
\[
|D^{zs}f|g|^{P(z)}\text{sgn}(g)w_z(x)|\leq A(1+|g|^{\frac{r'}{p'}+\frac{r'}{q'}})h\chi_{\supp(g)}\in L^1
\]
and the fact that \(D^{zs}f(x)|g(x)|^{P(z)}w_z(x)\) is continuous in \(z\) it follows that \(F(z)\) is continuous on \(\overline{S}\) by the dominated convergence theorem. Moreover the same estimate shows that we can apply Fubini's theorem to obtain
\[
\oint_\gamma F(z)dx=\int \text{sgn}(g)\oint_\gamma e^{z^2-1}D^{zs}f|g|^{P(z)}w_z(x)dzdx=0,
\]
so again by Morera's theorem we may conclude that \(F\) is analytic in \(S\). Now, using theorem \ref{thm2.3} we have the two estimates 
\begin{align*}
    |F(i\tau)|&\leq e^{-\tau^2}\|w_0D^{i\tau s}f\|_{L^q}\||g|^{r'/q'}\|_{L^{q'}}\lesssim_{q,v,w}e^{-\tau^2}P_d(|\tau s|)\|vf\|_{L^q}\lesssim_{q,d,v,w,s}\|vf\|_{L^q};\\
    |F(1+i\tau)|&\leq e^{-\tau^2}\|w_1D^{i\tau s}D^sf\|_{L^p}\||g|^{r'/p'}\|_{L^{p'}}\lesssim_{p,u,w}e^{-\tau^2}P_d(|\tau s|)\|uD^sf\|_{L^p}\\
    &\lesssim_{p,d,u,w,s}\|uD^sf\|_{L^p}.
\end{align*}
Therefore, by the three lines lemma,
\[
|F(z)|\lesssim_{p,q,d,u,v,w,s,\theta}\|uD^sf\|_{L^p}^{\theta}\|vf\|_{L^q}^{1-\theta}\text{ for all }z\text{ with }\mbox{Re}(z)=\theta\in ]0,1[.
\]
Setting \(z=t/s\) we get
\[
\left|\int wD^tfgdx\right|\lesssim_{p,q,d,u,v,w,s,t}\|uD^sf\|_{L^p}^{t/s}\|vf\|_{L^q}^{1-t/s}. 
\]
Taking the supremum over all \(g\in C^\infty_c\) with \(\|g\|_{L^{r'}}=1\), the result follows. 
\end{proof}

In the applications we look at in section \ref{particular}, the following corollary will prove to be enough.

\begin{cor}\label{cor2.15}
    Let \(1<p,q,r<\infty\), \(0<t<s\) and \(\alpha, \beta, \gamma\in \R\) be such that
$$
\frac{1}{r}=\frac{t}{s}\frac{1}{p}+\left(1-\frac{t}{s}\right)\frac{1}{q}$$
and
$$\gamma=\frac{t}{s}\alpha+\left(1-\frac{t}{s}\right)\beta.$$
Suppose also that \(w\) is such that \(w^\alpha, w^\beta\) are weights and \(w^{\alpha p}\in A_p\), \(w^{\beta q}\in A_q\). Then,
\[
\|w^\gamma D^tf\|_{L^r}\lesssim \|w^\alpha D^sf\|_{L^p}^{t/s}\|w^\beta f\|_{L^q}^{1-t/s},\ \forall f\in \mcal{S}(\R^d).
\]
\end{cor}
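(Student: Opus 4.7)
The plan is to deduce this corollary directly from Theorem \ref{thm2.4} by constructing the appropriate analytic family of weights. The natural choice, motivated by the interpolation $\gamma = (t/s)\alpha + (1-t/s)\beta$, is
\[
w_z(x) = w(x)^{\alpha z + \beta(1-z)}, \qquad z \in \overline{S}.
\]
With this definition, setting $u := w^\alpha$, $v := w^\beta$ (playing the roles of $u$ and $v$ in the theorem), one immediately has $w_{t/s}(x) = w(x)^\gamma$ by the hypothesis on $\gamma$, and $|w_{i\tau}(x)| = w(x)^\beta$, $|w_{1+i\tau}(x)| = w(x)^\alpha$, so we may take $w_0 = w^\beta$ and $w_1 = w^\alpha$ in the hypotheses of Theorem \ref{thm2.4}.

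The first step is to verify the three structural properties of $\{w_z\}$. Analyticity in $z$ is clear since $w(x) > 0$ a.e., so $w_z(x) = \exp\bigl((\alpha z + \beta(1-z))\log w(x)\bigr)$ is entire in $z$ for each $x$. The uniform bound $|w_{i\tau}(x)| \leq w^\beta(x)$ and $|w_{1+i\tau}(x)| \leq w^\alpha(x)$ is immediate. For the locally integrable majorant, note that for $\Re(z) \in [0,1]$ one has the pointwise bound
\[
|w_z(x)| \leq w(x)^\alpha + w(x)^\beta =: h(x),
\]
and local integrability of $h$ follows from $w^{\alpha p} \in A_p$ and $w^{\beta q} \in A_q$: these $A_p$ weights are locally integrable, and then Hölder's inequality gives $\int_K w^\alpha \leq |K|^{1/p'}(\int_K w^{\alpha p})^{1/p} < \infty$ on any compact $K$, and analogously for $w^\beta$.

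Next I verify the weight conditions required by Theorem \ref{thm2.4}. With the assignments above, $u^p = w_1^p = w^{\alpha p}$ and $v^q = w_0^q = w^{\beta q}$, so the pair $(u^p, w_1^p) \in A_{p,p}$ means precisely
\[
\sup_Q \Bigl(\dashint_Q w^{-\alpha p'}\Bigr)^{1/p'}\Bigl(\dashint_Q w^{\alpha p}\Bigr)^{1/p} < \infty,
\]
which is exactly the $A_p$ condition on $w^{\alpha p}$, granted by hypothesis. Analogously, $w^{\beta q} \in A_q$ gives $(v^q, w_0^q) \in A_{q,q}$. The $A_\infty$ requirements are then automatic: $w^{\alpha p} \in A_p \subseteq A_\infty$, and since $W \in A_p$ implies $W^{-p'/p} \in A_{p'} \subseteq A_\infty$, we get $w^{-\alpha p'} = (w^{\alpha p})^{-p'/p} \in A_\infty$; the same argument handles $w^{\beta q}$ and $w^{-\beta q'}$.

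With all hypotheses of Theorem \ref{thm2.4} verified, the conclusion
\[
\|w^\gamma D^t f\|_{L^r} = \|w_{t/s} D^t f\|_{L^r} \lesssim \|w^\alpha D^s f\|_{L^p}^{t/s}\|w^\beta f\|_{L^q}^{1-t/s}
\]
follows. No real obstacle is expected: the only nontrivial ingredient is choosing the interpolation family, after which everything reduces to bookkeeping with the definitions of $A_p$ and $A_{p,q}$, plus the elementary fact that $(w^{\alpha p})^{-p'/p}$ lies in $A_{p'}$.
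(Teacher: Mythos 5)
Your proposal matches the paper's proof exactly: the paper also applies Theorem \ref{thm2.4} with $w_z(x)=w(x)^{z\alpha+(1-z)\beta}$, $h=w^\alpha+w^\beta$, $w_0=v=w^\beta$, $w_1=u=w^\alpha$. The verifications you supply (that $(w^{\alpha p},w^{\alpha p})\in A_{p,p}$ is the $A_p$ condition, that $(w^{\alpha p})^{-p'/p}=w^{-\alpha p'}\in A_{p'}\subseteq A_\infty$, and the AM--GM bound $|w_z|\leq w^\alpha+w^\beta$) are correct and are exactly the routine bookkeeping the paper leaves implicit.
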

\begin{proof}
    Apply theorem \ref{thm2.4} with \(w_z(x) = w(x)^{z\alpha + (1-z)\beta}\), \(h(x) = w(x)^\alpha + w(x)^\beta, w_0(x) = v(x) = w(x)^\beta, w_1(x) = u(x) = w(x)^\alpha\).
\end{proof}

\subsection{Intermediate exponents}

In this section we use theorems \ref{thm2.2} and \ref{thm2.4} to prove the Gagliardo-Nirenberg inequality for
the intermediate range of values of the interpolation parameter, between those two end points.
\begin{thm}\label{thm2.5}
Let \(1<p,q,r<\infty, 0<t<s\), \(t/s < \theta < 1\) and define \(a\in]1,+\infty[\) by 
\[
\frac{1}{a}=\frac{\theta}{p}+\frac{1-\theta}{q}.
\]
Suppose that \(a\leq r\) and \(\theta s < t + d\). Assume there is some weight \(\tilde{w}\) such that \((\tilde{w}^a, w^r)\in A_{a,r}^{\theta s-t}\) and \(\tilde{w}^{-a'}, w^r\in A_\infty\). Moreover, assume that there is a family \(\{w_z\}_{z\in \overline{S}}\) satisfying the assumptions of theorem \ref{thm2.4} with \(\tilde{w}=w_\theta\), \((u^p, w_1^p)\in A_{p,p}, (v^q, w_0^q)\in A_{q, q}\) and \(u^{-p'}, w_1^p, v^{-q'}, w_0^q\in A_\infty\). Then, for each \(f\in \mcal{S}(\R^d)\) there is some polynomial \(P\) with degree at most \(\lfloor \theta s - t\rfloor\) such that 
\[
\|w(D^tf-P)\|_{L^r}\lesssim \|uD^sf\|_{L^p}^\theta \|vf\|_{L^q}^{1-\theta}.
\]
Furthermore, if \(w^r(\Gamma^S_R)=+\infty\) for all $S\subseteq S^{d-1}$, such that $|S|>0$, and all $R>0$, then \(P=0\).
\end{thm}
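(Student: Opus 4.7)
The plan is to combine Theorems \ref{thm2.2} and \ref{thm2.4} by factoring $D^{t} = D^{t-\theta s} D^{\theta s}$: use the weighted Sobolev inequality at the intermediate regularity level $\theta s$ with the interpolated Lebesgue exponent $a$ to pass from $D^{t}$ to $D^{\theta s}$, and then use the minimum-exponent inequality at level $\theta s$ to split $D^{\theta s}$ into the two endpoint terms $D^{s}f$ and $f$. This is precisely the two-step scheme described in the introduction, with the parameters of the theorem chosen exactly so the hypotheses of those two black boxes match up.

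First I would apply Theorem \ref{thm2.2} with the substitutions $(p,q,s,t)\mapsto (a,r,\theta s,t)$. The conditions $a\leq r$ and $t<\theta s < t+d$ are assumed (note $\theta>t/s$ gives $\theta s > t$), the pair hypothesis becomes $(\tilde{w}^{a},w^{r})\in A_{a,r}^{\theta s-t}$, and the weight condition $v^{-p'/p}\in A_{\infty}$ reduces to $\tilde{w}^{-a'}\in A_\infty$, both given. Since $t>0$, Remark \ref{remark1} ensures that $\mathcal{S}(\R^d)\subseteq D^{>-t}\mathcal{S}(\R^d)$, so for any $f\in\mathcal{S}(\R^d)$ there is a polynomial $P$ with $\deg P\leq \lfloor\theta s-t\rfloor$ such that
\[
\|w(D^{t}f-P)\|_{L^{r}} \;=\; \|D^{t}f-P\|_{L^{r}(w^{r})}\;\lesssim\;\|D^{\theta s}f\|_{L^{a}(\tilde{w}^{a})}\;=\;\|\tilde{w}\,D^{\theta s}f\|_{L^{a}}.
\]

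Next I would apply Theorem \ref{thm2.4} with the substitutions $(r,t,s)\mapsto (a,\theta s,s)$: the required Lebesgue relation reads $\tfrac{1}{a}=\tfrac{\theta s}{s}\cdot\tfrac{1}{p}+(1-\tfrac{\theta s}{s})\tfrac{1}{q}$, which is exactly the definition of $a$. The analytic family $\{w_{z}\}$ with $w_{\theta}=\tilde{w}$ and the joint hypotheses $(u^{p},w_{1}^{p})\in A_{p,p}$, $(v^{q},w_{0}^{q})\in A_{q,q}$, plus the corresponding $A_{\infty}$ conditions, are precisely the standing assumptions of the present theorem, so Theorem \ref{thm2.4} yields
\[
\|\tilde{w}\,D^{\theta s}f\|_{L^{a}}\;\lesssim\;\|u D^{s}f\|_{L^{p}}^{\theta}\,\|v f\|_{L^{q}}^{1-\theta}.
\]
Chaining the two estimates gives the main inequality.

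For the final assertion, note that for $f\in\mathcal{S}(\R^d)$ and $t\geq 0$ the symbol $|\xi|^{t}\hat{f}(\xi)$ lies in $L^{1}$, so by Riemann–Lebesgue the function $D^{t}f$ is continuous with $D^{t}f(x)\to 0$ as $|x|\to\infty$. Since $w^{r}$ is locally integrable, the super-level sets $\{|D^{t}f|>\lambda\}$ are bounded and hence of finite $w^{r}\,dx$ measure, so $D^{t}f$ vanishes at infinity with respect to $w^{r}(x)\,dx$. The bound $\|w(D^{t}f-P)\|_{L^{r}}<\infty$ shows that $D^{t}f-P$ also vanishes at infinity with respect to this measure, hence so does $P$. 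Invoking Lemma \ref{lemma2.4} with the standing hypothesis $w^{r}(\Gamma_{R}^{S})=+\infty$ forces $P\equiv 0$. The only mild obstacle in this proof is the bookkeeping needed to verify that the parameters and $A_{p,q}^{\alpha}$ / $A_{\infty}$ conditions line up between the two invocations, but by design of the hypotheses this is automatic.
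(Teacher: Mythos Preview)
Your proof is correct and follows exactly the same two-step scheme as the paper: first invoke Theorem~\ref{thm2.2} with parameters $(a,r,\theta s,t)$ to pass from $D^{t}f$ to $D^{\theta s}f$ up to a polynomial, then invoke Theorem~\ref{thm2.4} at level $\theta s$ to split into the two endpoint factors. Your treatment of the ``Furthermore'' clause is in fact more explicit than the paper's own proof, which simply writes ``The result now follows'' and leaves the polynomial elimination implicit (relying on the argument already given for Corollary~\ref{cor2.1} and Lemma~\ref{lemma2.4}); your use of Riemann--Lebesgue plus local integrability of $w^{r}$ to show $D^{t}f$ vanishes at infinity with respect to $w^{r}\,dx$ is exactly the right justification.
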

\begin{proof}
Note that \(1<a\leq r<\infty, 0<t<\theta s\), \((\tilde{w}^a,w^r)\in A_{a,r}^{\theta s -t}\) and \(\tilde{w}^{-a'}, w^r\in A_\infty\). From theorem \ref{thm2.2} we conclude the existence of some polynomial \(P\) with degree at most \(\lfloor \theta s-t\rfloor\) such that
\[
\|D^tf-P\|_{L^r(w^r)}\lesssim \|D^{\theta s}f\|_{L^a(\tilde{w}^a)}.
\]
Note also that \(0<\theta s<s\), \(1/a=\theta/p+(1-\theta)/q\), \((u^p, w_1^p)\in A_{p,p}, (v^q, w_0^q)\in A_{q, q}\) and \(u^{-p'}, w_1^p, v^{-q'}, w_0^q\in A_\infty\). Therefore, by theorem \ref{thm2.4} we see that
\[
\|\tilde{w}D^{\theta s}f\|_{L^a}\lesssim \|uD^sf\|_{L^p}^\theta \|vf\|_{L^q}^{1-\theta}.
\]
The result now follows.
\end{proof}

\section{Particular Families of Weights}\label{particular}

In this section we apply the theorems from the previous section to a couple of particular cases, namely power-law weights \(w(x) = |x|^\gamma\) and japanese bracket weights \(w(x) = \langle x\rangle^\gamma\)

\subsection{Homogeneous weights}\label{Homogeneous weights}

In this section we devote our attention to weights of the form \(w(x)=|x|^\gamma,\ \gamma>-d\). Our goal is to apply theorems \ref{thm2.2}, \ref{thm2.4} and \ref{thm2.5} to homogeneous weights. For that, we must first study when these weights satisfy the \(A_{p,q}^\alpha\) condition and the \(A_\infty\) condition. 

Let us begin by looking at the \(A_{p,q}^\alpha\) condition. Let \(1<p,q<\infty\) and \(\alpha\in\R\). Our goal is to find all triples \((a,b,\alpha)\) such that 
\[
\sup_Q |Q|^{\frac{\alpha}{d}-1}\left(\int_Q|x|^a\right)^{1/p'}\left(\int_Q|x|^b\right)^{1/q}<\infty.
\]
First and foremost the functions need to be locally integrable so we need \(a>-d\) and \(b>-d\). Clearly the triples that satisfy this condition are the same as the triples that satisfy the corresponding condition for balls. This way, we consider balls instead of cubes. Here we use an idea as in \cite{CGrafakos}, which is to split between balls of type I and balls of type II. A ball \(B=B(x_0,r)\) is said to be of type I if \(2r<|x_0|\) and of type II if \(2r\geq |x_0|\). For balls of type I the radius is small so we can compare all points to the center point, whereas balls of type II can be compared to balls centered at the origin. Suppose \(B=B(x_0,r)\) is a ball of type I. Then, if \(x\in B\), we have that
\[
\frac{1}{2}|x_0|\leq |x-x_0|+|x|-\frac{1}{2}|x_0|\leq |x|\leq |x-x_0|+|x_0|\leq \frac{3}{2}|x_0|.
\]
so \(|x|\sim |x_0|,\forall x\in B\). This means that
\[
|B|^{\frac{\alpha}{d}-1}\left(\int_B|x|^a\right)^{1/p'}\left(\int_B|x|^b\right)^{1/q}\sim |B|^{\frac{\alpha}{d}-1+\frac{1}{p'}+\frac{1}{q}}|x_0|^{\frac{a}{p'}+\frac{b}{q}}\sim r^{\alpha-\frac{d}{p}+\frac{d}{q}}|x_0|^{\frac{a}{p'}+\frac{b}{q}}.
\]
If we fix \(x_0\neq 0\) and let \(r\rightarrow 0^+\) we see that the condition 
\[
\alpha-\frac{d}{p}+\frac{d}{q}\geq0
\]
is necessary. Also, if we pick \(r=|x_0|/4, x_0\neq 0\), we see that for this to be bounded we need the condition 
\begin{equation}\label{eq3.2}
\alpha-\frac{d}{p}+\frac{d}{q}+\frac{a}{p'}+\frac{b}{q}=0.
\end{equation}
If both of these conditions are satisfied then the supremum over type I balls is finite. If instead we consider a type II ball \(B=B(x_0,r)\), then since \(B\subseteq B(0,3r)\) this leads us to consider balls that are centered at the origin. We have that
\[
\begin{split}
|B(0,R)|^{\frac{\alpha}{d}-1}\left(\int_{B(0,R)}|x|^a\right)^\frac{1}{p'}\left(\int_{B(0,R)}|x|^b\right)^\frac{1}{q}&\sim R^{\alpha-d}\left(\int_0^R\rho^{a+d-1}d\rho\right)^\frac{1}{p'}\left(\int_0^R\rho^{b+d-1}d\rho\right)^\frac{1}{q}\\
&\sim R^{\alpha-d+\frac{a+d}{p'}+\frac{b+d}{q}}.
\end{split}
\]
So, this quantity remains bounded \(\forall R>0\) if and only if condition \eqref{eq3.2} is satisfied. These considerations lead us to the following lemma.
\begin{lemma}\label{lemma3.1}
Suppose \(1<p,q<\infty\) and \(\alpha\in \R\). Then,
\[
(|x|^a, |x|^b)\in A_{p,q}^\alpha \iff \alpha\geq \frac{d}{p}-\frac{d}{q},\  \alpha-\frac{d}{p}+\frac{d}{q}=\frac{a}{p}-\frac{b}{q},\ a<d(p-1)\text{ and }b>-d.
\]
\end{lemma}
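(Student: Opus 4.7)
The plan is to pass from cubes to balls (which yield an equivalent characterization) and exploit the type~I / type~II dichotomy set up just before the statement. Writing $\tilde a := -ap'/p$, the $A_{p,q}^\alpha$ quantity reads
\[
\sup_B |B|^{\alpha/d - 1}\left(\int_B |x|^{\tilde a}\right)^{1/p'}\left(\int_B |x|^b\right)^{1/q},
\]
so local integrability on small balls near the origin immediately forces $\tilde a>-d$ (i.e.\ $a<d(p-1)$) and $b>-d$.

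For the remaining necessity, I would focus on type~I balls $B(x_0,r)$ with $2r<|x_0|$, on which $|x|\sim|x_0|$. The preamble calculation gives the comparability
\[
|B|^{\alpha/d-1}\left(\int_B|x|^{\tilde a}\right)^{1/p'}\left(\int_B|x|^b\right)^{1/q}\sim r^{\alpha-d/p+d/q}\,|x_0|^{\tilde a/p'+b/q}.
\]
Fixing $x_0\neq 0$ and letting $r\to 0^+$ forces $\alpha-d/p+d/q\ge 0$, since otherwise the factor $r^{\alpha-d/p+d/q}$ blows up. Then, choosing $r=|x_0|/4$ (still type~I) and varying $|x_0|$ over $(0,\infty)$ forces the total exponent $\alpha-d/p+d/q+\tilde a/p'+b/q$ on $|x_0|$ to vanish. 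Substituting back $\tilde a=-ap'/p$, this reads $\alpha-d/p+d/q=a/p-b/q$, as required.

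For sufficiency, assume the four conditions and estimate the quantity uniformly in both regimes. On a type~I ball, the equation collapses the expression above into $(r/|x_0|)^{\alpha-d/p+d/q}$, which is bounded by $(1/2)^{\alpha-d/p+d/q}$ since $r/|x_0|<1/2$ and the exponent is nonnegative. On a type~II ball $B(x_0,r)$ with $2r\ge|x_0|$, the inclusion $B(x_0,r)\subseteq B(0,3r)$ combined with $\tilde a,b>-d$ bounds both integrals by the corresponding integrals over $B(0,3r)$; a direct polar-coordinate calculation (as in the preamble) gives a quantity of the form $r^{\alpha-d+(\tilde a+d)/p'+(b+d)/q}$, whose exponent is exactly zero by the equation. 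Combining the two regimes yields the desired uniform bound.

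I do not anticipate a genuine obstacle, as all the computations are already essentially carried out in the preceding discussion. The only point requiring care is the change of parameter $\tilde a=-ap'/p$ between the lemma's formulation (where $a$ is the exponent of $v$ itself) and the preamble's (where $a$ is the exponent of the integrand $v^{-p'/p}$); this is precisely what converts the $-\tilde a/p'$ arising in the derivation into the $a/p$ appearing in the final statement.
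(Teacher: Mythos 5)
Your proposal is correct and follows the same route as the paper: split into type~I and type~II balls, extract the inequality $\alpha - d/p + d/q \ge 0$ from small type~I balls, extract the balancing equation from type~I balls with $r\sim|x_0|$ (equivalently, from balls centered at the origin), and verify sufficiency in both regimes. You also correctly and explicitly handle the notational shift $\tilde a = -ap'/p$ between the $A_{p,q}^\alpha$ definition (which integrates $v^{-p'/p}$) and the lemma's statement (which parametrizes $v=|x|^a$), a point the paper leaves implicit in passing from its preliminary computation to the lemma.
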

Noting that the \(A_{p,q}\) condition is just the \(A_{p,q}^\alpha\) condition with \(\alpha=d/p-d/q\), it follows that
\[
(|x|^a, |x|^b)\in A_{p,q} \iff \frac{a}{p}=\frac{b}{q},\ a<d(p-1)\text{ and }b>-d.
\]
In particular,
\[
|x|^a\in A_p\iff (|x|^a,|x|^a)\in A_{p,p}\iff -d<a<d(p-1).
\]
This shows that homogeneous weights always satisfy some \(A_p\) condition, and therefore they always satisfy the \(A_\infty\) condition. With this we can now apply corollary \ref{cor2.1}, corollary \ref{cor2.15} and the argument in the proof of theorem \ref{thm2.5} with \(\tilde{w} = |x|^\mu, \mu = \theta \alpha + (1-\theta)\beta\), to yield the following theorem.
\begin{thm}\label{thm3.1}
Suppose that \(1<p,q,r<\infty, 0\leq t<s,\, t/s\leq\theta\leq 1, \alpha\in]-d/p,d/p'[,\beta\in]-d/q,d/q'[,\gamma>-d/r\). Assume these satisfy
$$
    \frac{1}{r}-\frac{t-\gamma}{d}=\theta\left(\frac{1}{p}-\frac{s-\alpha}{d}\right)+(1-\theta)\left(\frac{1}{q}+\frac{\beta}{d}\right)$$
    and
    $$0\leq \theta \alpha+(1-\theta)\beta-\gamma\leq \theta s-t.$$
Then it follows that
\[
\||x|^\gamma D^tf\|_{L^r(\R^d)}\lesssim \||x|^\alpha D^sf\|_{L^p(\R^d)}^\theta\||x|^\beta f\|_{L^q(\R^d)}^{1-\theta},\ \forall f\in \mcal{S}(\R^d).
\]
\end{thm}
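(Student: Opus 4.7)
The plan is to invoke Theorem~\ref{thm2.5} (with Corollaries~\ref{cor2.1} and~\ref{cor2.15} handling the endpoints $\theta=1$ and $\theta=t/s$), verifying the various Muckenhoupt conditions via Lemma~\ref{lemma3.1}. First I would establish that the three relevant power-law weights lie in appropriate Muckenhoupt classes: the hypotheses $\alpha\in(-d/p,d/p')$ and $\beta\in(-d/q,d/q')$ give $|x|^{\alpha p}\in A_p$ and $|x|^{\beta q}\in A_q$ by Lemma~\ref{lemma3.1}, while Remark~\ref{remark on the size of gamma} yields $\gamma<d/r'$, hence $|x|^{\gamma r}\in A_r$. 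In particular each of these is a radial $A_\infty$ weight, so by the remark following Lemma~\ref{lemma2.4} it satisfies the cone condition $w(\Gamma_R^S)=+\infty$.

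At $\theta=1$ the scaling identity collapses to $s-t=\alpha-\gamma+d/p-d/r$, and the range constraint forces $0\leq\alpha-\gamma\leq s-t$; taking $(v,w)=(|x|^{\alpha p},|x|^{\gamma r})$ in Lemma~\ref{lemma3.1} verifies $(v,w)\in A_{p,r}^{s-t}$, so Corollary~\ref{cor2.1} delivers the Sobolev endpoint. At $\theta=t/s$ the upper range constraint $\theta\alpha+(1-\theta)\beta-\gamma\leq\theta s-t=0$ combined with the lower one forces $\gamma=(t/s)\alpha+(1-t/s)\beta$, whereupon the scaling simplifies to $1/r=(t/s)/p+(1-t/s)/q$, and Corollary~\ref{cor2.15} (with $w(x)=|x|$) finishes the other endpoint.

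For the intermediate regime $t/s<\theta<1$ I would apply Theorem~\ref{thm2.5} with the analytic family $w_z(x)=|x|^{z\alpha+(1-z)\beta}$, dominated by $h(x)=|x|^\alpha+|x|^\beta$ and with endpoint weights $w_1=u=|x|^\alpha$, $w_0=v=|x|^\beta$; then $\tilde w:=w_\theta=|x|^\mu$ with $\mu:=\theta\alpha+(1-\theta)\beta$. Setting $1/a:=\theta/p+(1-\theta)/q$, the key pair condition $(|x|^{\mu a},|x|^{\gamma r})\in A_{a,r}^{\theta s-t}$ becomes, via Lemma~\ref{lemma3.1}, the identity $\theta s-t-d/a+d/r=\mu-\gamma$ (a direct consequence of the hypothesized scaling) together with $0\leq\mu-\gamma\leq\theta s-t$ (the range condition). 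The remaining $A_{p,p}$ and $A_{q,q}$ pair conditions, and all required $A_\infty$ properties, reduce to the single-weight facts already established. The polynomial correction in the conclusion of Theorem~\ref{thm2.5} vanishes by the cone condition on $|x|^{\gamma r}$.

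The main obstacle lies in the bookkeeping of the intermediate case: in particular, one must verify the technical side condition $\theta s<t+d$ demanded by Theorem~\ref{thm2.5}. This is not immediate from the hypotheses, but should follow by combining the derived identity $\theta s-t=\mu-\gamma+d(1/a-1/r)$ with the strict bounds $\alpha<d/p'$, $\beta<d/q'$, $\gamma>-d/r$ to obtain $\mu-\gamma<d/a'+d/r$, and hence $\theta s-t<d/a'+d/a=d$.
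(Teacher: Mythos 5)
Your proposal follows exactly the paper's intended route: Corollary~\ref{cor2.1} for the Sobolev endpoint $\theta=1$, Corollary~\ref{cor2.15} for the minimum-exponent endpoint $\theta=t/s$, and the argument of Theorem~\ref{thm2.5} with $\tilde{w}=|x|^{\theta\alpha+(1-\theta)\beta}$ for the intermediate range, with all Muckenhoupt conditions verified via Lemma~\ref{lemma3.1} and the cone condition via the remark after Lemma~\ref{lemma2.4}; your derivation of the side condition $\theta s-t<d$ from the scaling identity together with $\alpha<d/p'$, $\beta<d/q'$, $\gamma>-d/r$ is also correct and matches what is implicit in the paper. The one detail worth making explicit is the endpoint $t=0$, where Corollary~\ref{cor2.1} as stated requires $t>0$ and the paper instead invokes Remark~\ref{remark1}, using precisely the bound $\gamma<d/r'$ that you derive to obtain $\mathcal{S}(\R^d)\subseteq L^{r'}(|x|^{-\gamma r'})$.
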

\begin{remark}
We can include the case \(t=0\) here because \(\S(\R^d)\subseteq L^{r'}(|x|^{-\gamma r'})\) (see remark \ref{remark1}). This is because, according to remark \ref{remark on the size of gamma}, \(\gamma < d/r'\). 
\end{remark}
\begin{remark}
Note that this theorem generalizes both the Stein-Weiss inequality (\cite{SteinWeiss}) and Lin's inequality (\cite{Lin}) (at least when \(r>1\)). 
\end{remark}
\begin{remark}
In the original work of Lin, the condition \(\theta \alpha + (1-\theta)\beta - \gamma \leq \theta s - t\) is only present if 
\[
\frac{1}{p}-\frac{s-\alpha}{d} = \frac{1}{q}+\frac{\beta}{d}.
\]
Whether this is not true in the setting of Riesz derivatives or whether our argument can be improved to recover this range of parameters remains unclear. In our case, this condition comes from the requirement that \(p\leq q\) in theorem \ref{thm2.2}.
\end{remark}

\subsection{Non-homogeneous weights}

In this section we apply the theorems of section \ref{general} to the weights \(w(x)=\langle x\rangle^\gamma\). Again, to do this we simply have to check when such weights satisfy the \(A_{p,q}^\alpha\) condition and the \(A_\infty\) condition. By splitting between type I and type II balls it is not hard to see that
\[
\langle x\rangle^\gamma\in A_p\iff -d<\gamma<d(p-1).
\]
This means that the weights \(\langle x\rangle^\gamma\) are in \(A_\infty\) if and only if \(\gamma>-d\). For this reason we restrict our attention to this range of exponents.

Let us now check when the \(A_{p,q}^\alpha\) condition holds. Let \(1<p,q<\infty\), \(\alpha\in \R\) and \(a,b>-d\). We are interested in the triples \((a,b,\alpha)\) such that
\[
\sup_{B}|B|^{\frac{\alpha}{d}-1}\left(\int_B\langle x\rangle^a\right)^{1/p'}\left(\int_B\langle x\rangle^b\right)^{1/q}<\infty. 
\]
Suppose \(B=B(x_0,r)\) is a type I ball, i.e. \(2r<|x_0|\). Then,
\[
|B|^{\frac{\alpha}{d}-1}\left(\int_B\langle x\rangle^a\right)^{1/p'}\left(\int_B\langle x\rangle^b\right)^{1/q}\sim r^{\alpha-\frac{d}{p}+\frac{d}{q}}\langle x_0\rangle^{\frac{a}{p'}+\frac{b}{q}}.
\]
By fixing \(x_0\) and letting \(r\rightarrow 0\) we see that the condition \(\alpha-d/p+d/q\geq 0\) is necessary. Also, choosing \(r=\langle x_0\rangle /4, |x_0|>1\), and letting \(|x_0|\rightarrow +\infty\) we see that the condition 
\[
\alpha-\frac{d}{p}+\frac{d}{q}+\frac{a}{p'}+\frac{b}{q}\leq 0
\]
is also necessary. These two conditions combined are sufficient to control all type I balls and since they are necessary we assume they hold from now on. Since type II balls can be compared to balls centered at the origin we now consider a ball \(B=B(0,r)\). Put
\[
\Phi(r,a)=\int_{B(0,r)}\langle x\rangle^adx=d|B_1|\int_0^r\rho^{d-1}(1+\rho^2)^{a/2}d\rho.
\]
It is clear that
\[
\lim_{r\rightarrow 0}\frac{\Phi(r,a)}{r^d}=|B_1|.
\]
Therefore, 
\[
|B(0,r)|^{\frac{\alpha}{d}-1}\Phi(r,a)^{1/p'}\Phi(r,b)^{1/q}\sim r^{\alpha-\frac{d}{p}+\frac{d}{q}}\left(\frac{\Phi(r,a)}{r^d}\right)^{1/p'}\left(\frac{\Phi(r,b)}{r^d}\right)^{1/q}
\]
has finite limit as \(r\rightarrow 0\) if and only if \(\alpha-d/p+d/q\geq 0\). Now, since \(a>-d\), then for \(r>1\) we have that
\[
\begin{split}
\Phi(r,a)&=d|B_1|\int_0^1\rho^{d-1}(1+\rho^2)^{a/2}d\rho+d|B_1|\int_1^r\rho^{d-1}(1+\rho^2)^{a/2}d\rho \\
&\sim_ad|B_1|\int_0^1\rho^{d-1}d\rho+d|B_1|\int_1^r\rho^{d-1+a}d\rho\\
&\sim_a|B_1|+d|B_1|\frac{r^{a+d}-1}{a+d}\sim_{a,d}\frac{a}{a+d}+\frac{d}{a+d}r^{a+d}.
\end{split}
\]
Using this we get for \(r>1\),
\[
\begin{split}
r^{\alpha-d}\Phi(r,a)^{1/p'}\Phi(r,b)^{1/q}&\sim r^{\alpha-d}\left( \frac{a}{a+d}+\frac{d}{a+d}r^{a+d}\right)^{1/p'}\left(\frac{b}{b+d}+\frac{d}{b+d}r^{b+d}\right)^{1/q}\\
&\sim \left[ \frac{a}{a+d}\left( \frac{b}{b+d}r^{q(\alpha-d)}+\frac{d}{b+d}r^{q(\alpha-d)+b+d} \right)^{\frac{p'}{q}}+\right.\\
&\left.\frac{d}{a+d}\left( \frac{b}{b+d}r^{q(\alpha-d)+\frac{q}{p'}(a+d)}+\frac{d}{b+d}r^{q(\alpha-d)+b+d+\frac{q}{p'}(a+d)} \right)^{\frac{p'}{q}}  \right]^{\frac{1}{p'}}
\end{split}
\]
which remains bounded as \(r\rightarrow +\infty\) if and only if \(\alpha\leq d,\ q(\alpha-d)+b+d\leq 0\) and \(q(\alpha-d)+q(a+d)/p'\leq 0\). Therefore we have shown the following lemma.

\begin{lemma}\label{lemma3.2}
Suppose \(1<p,q<\infty\), \(\alpha\in \R\) and \(a<d(p-1),\ b>-d\). Then, the pair \((\langle x\rangle^a, \langle x\rangle^b)\) satisfies the \(A_{p,q}^\alpha\) condition if and only if
\[
\alpha\geq \frac{d}{p}-\frac{d}{q},\ \alpha-\frac{d}{p}+\frac{d}{q}\leq \frac{a}{p}-\frac{b}{q},\ \alpha\leq d,\ \frac{b}{q}\leq \frac{d}{q'}-\alpha,\ \frac{a}{p}\geq \alpha-\frac{d}{p}.
\]
\end{lemma}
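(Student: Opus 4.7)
The strategy is essentially to complete the case analysis that is already carried out in the discussion immediately preceding the lemma statement. All the hard asymptotic computations have been done there, so what remains is to (i) verify sufficiency (only necessity was explicitly argued), (ii) carefully separate the two notational conventions (in the statement \(v=\langle x\rangle^a\), whereas the preceding computation uses ``\(a\)'' for the exponent of \(v^{-p'/p}\), i.e.\ analysis-\(a\)\ =\ \(-\,\text{statement-}a\cdot p'/p\)), and (iii) check that the conditions derived in each case translate exactly to the five conditions in the lemma.

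The plan is as follows. Reduce the supremum over cubes to the supremum over balls and split any ball \(B=B(x_0,r)\) into type I (\(2r<|x_0|\)) and type II (\(2r\ge|x_0|\)). For type I balls, \(\langle x\rangle\sim\langle x_0\rangle\) uniformly on \(B\), so the relevant quantity behaves like \(r^{\alpha-d/p+d/q}\langle x_0\rangle^{a/p'+b/q}\) in analysis notation. Fixing \(x_0\) and letting \(r\to 0^+\) forces \(\alpha\ge d/p-d/q\); taking \(r=\langle x_0\rangle/4\) with \(|x_0|\to\infty\) forces the combined exponent to be \(\le 0\). Conversely, these two inequalities are enough to bound the type I contribution uniformly, so on the type I side the conditions are both necessary and sufficient.

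For type II balls, inclusion in \(B(0,3r)\) reduces everything to balls centered at the origin, and one analyses \(\Phi(r,a)=\int_{B(0,r)}\langle x\rangle^a\,dx\). The small-\(r\) behaviour \(\Phi(r,a)/r^d\to|B_1|\) recovers (and is consistent with) \(\alpha\ge d/p-d/q\), while the large-\(r\) asymptotic \(\Phi(r,a)\sim_{a,d} 1+r^{a+d}\) (valid since analysis-\(a>-d\)) gives, after multiplying \(\Phi(r,a)^{1/p'}\Phi(r,b)^{1/q}\) by \(r^{\alpha-d}\) and expanding, four candidate powers of \(r\); requiring each to stay bounded as \(r\to\infty\) produces exactly the three conditions
\[
\alpha\le d,\qquad q(\alpha-d)+b+d\le 0,\qquad q(\alpha-d)+\tfrac{q}{p'}(a+d)\le 0.
\]
Uniform control of the supremum then follows by the same estimate, so these are both necessary and sufficient on the type II side.

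The last step is the bookkeeping: translating analysis-\(a\) to statement-\(a\) via \(a_{\mathrm{anal}}=-a_{\mathrm{stmt}}p'/p\) (and leaving \(b\) unchanged), the condition \(a_{\mathrm{anal}}>-d\) becomes \(a<d(p-1)\); the type~I combined condition becomes \(\alpha-d/p+d/q\le a/p-b/q\); and the two nontrivial type~II conditions become \(b/q\le d/q'-\alpha\) and \(a/p\ge \alpha-d/p\), respectively. Together with \(\alpha\ge d/p-d/q\) and \(\alpha\le d\) these are precisely the five conditions listed in the lemma. There is no real obstacle; the only point requiring a little care is tracking the four \(r\)-exponents in the type II asymptotic expansion and checking that dominating each of them individually is equivalent to dominating the full product, which follows because the coefficients in the expansion of \(\Phi\) are all positive.
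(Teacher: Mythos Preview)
Your proposal matches the paper's argument step for step: the type I/type II split, the $\Phi(r,a)$ asymptotics, and the translation between the two uses of the symbol ``$a$'' are exactly what the paper does in the discussion preceding the lemma. One point deserves to be made explicit (the paper glosses over it as well): from the large-$r$ expansion you get \emph{four} candidate exponents, and since $b+d>0$ and (analysis-)$a+d>0$, the largest of the four is $E_4=q(\alpha-d)+b+d+\tfrac{q}{p'}(a+d)$; boundedness is therefore equivalent to $E_4\le 0$ alone. This is not one of the ``three conditions'' you list --- it is precisely the type~I combined inequality (condition~2 after translation), so it is already in hand rather than redundant. The three conditions $\alpha\le d$, $q(\alpha-d)+b+d\le 0$, $q(\alpha-d)+\tfrac{q}{p'}(a+d)\le 0$ are then automatic consequences of $E_4\le 0$ together with $a+d,\,b+d>0$, which is why the lemma's five displayed conditions are equivalent to just the first two. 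With the asymptotic written as $\Phi\sim 1+r^{a+d}$ your positivity remark is correct (note it would fail for the paper's version $\tfrac{a}{a+d}+\tfrac{d}{a+d}r^{a+d}$ when $-d<a<0$), so the equivalence you invoke is valid once the role of the fourth exponent is clarified.
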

We now apply again corollary \ref{cor2.1}, corollary \ref{cor2.15} and theorem \ref{thm2.5} with \(\tilde{w}(x) = \langle x\rangle^{\mu}, \mu = \alpha \theta + \beta(1-\theta)\). This yields the following result:
\begin{thm}\label{thm3.2}
Suppose that \(1<p,q,r<\infty,\, 0\leq t<s,\,\, t/s \leq \theta \leq 1, \alpha\in]-d/p,d/p'[,\beta\in]-d/q,d/q'[,\gamma>-d/r\). Assume these satisfy
\begin{align*}
    \frac{1}{r}&\leq \frac{\theta}{p} + \frac{1-\theta}{q}, \\
    \gamma &\leq \theta \alpha+(1-\theta)\beta
\end{align*}
and
\[
\theta\left(\frac{1}{p}-\frac{s}{d}\right)+\frac{1-\theta}{q}\leq \frac{1}{r}-\frac{t}{d}\leq \theta\left(\frac{1}{p}-\frac{s-\alpha}{d}\right)+(1-\theta)\left(\frac{1}{q}+\frac{\beta}{d}\right)-\frac{\gamma}{d}.
\]
Then it follows that
\[
\|\langle x\rangle^\gamma D^tf\|_{L^r(\R^d)}\lesssim\|\langle x\rangle^\alpha D^sf\|_{L^p(\R^d)}^\theta\|\langle x\rangle^\beta f\|_{L^q(\R^d)}^{1-\theta},\ \forall f\in \mcal{S}(\R^d).
\]
\end{thm}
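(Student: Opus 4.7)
The strategy will mirror the homogeneous case (Theorem \ref{thm3.1}). The plan is to treat the two endpoints $\theta=1$ and $\theta=t/s$ separately by specializing Corollary \ref{cor2.1} and Corollary \ref{cor2.15} to Japanese bracket weights, and then to interpolate via Theorem \ref{thm2.5} for $t/s<\theta<1$, using the auxiliary weight $\tilde w(x)=\langle x\rangle^{\mu}$ with $\mu=\theta\alpha+(1-\theta)\beta$.

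First I would record the weight-class facts: since $-d/p<\alpha<d/p'$, $-d/q<\beta<d/q'$, and $-d/r<\gamma<d/r'$ (the upper bound on $\gamma$ following by the same computation as in Remark \ref{remark on the size of gamma}), the membership criterion shown right before Lemma \ref{lemma3.2} gives $\langle x\rangle^{\alpha p}\in A_p$, $\langle x\rangle^{\beta q}\in A_q$, $\langle x\rangle^{\gamma r}\in A_r$ and, in particular, they are all in $A_\infty$. Moreover, since $\gamma r>-d$, integrating in polar coordinates shows $\int_{\Gamma^S_R}\langle x\rangle^{\gamma r}\,dx=+\infty$ for every solid cone $\Gamma^S_R$ (the weight $\langle x\rangle^{\gamma r}$ comparable to $|x|^{\gamma r}$ at infinity), so the cone condition of Lemma \ref{lemma2.4} holds for $w^r$.

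Next I would handle the endpoint $\theta=1$: here the inequality reduces to a weighted Sobolev estimate, and Corollary \ref{cor2.1} applies provided the pair $(\langle x\rangle^{\alpha p},\langle x\rangle^{\gamma r})$ lies in $A_{p,r}^{s-t}$. The five inequalities of Lemma \ref{lemma3.2} (with the appropriate substitutions of exponents) translate precisely into the scaling hypotheses of the theorem — the upper bound $1/r-t/d\leq \theta(1/p-(s-\alpha)/d)+(1-\theta)(1/q+\beta/d)-\gamma/d$ (at $\theta=1$) together with $\gamma\leq\alpha$ and the bounds $-d/p<\alpha<d/p'$, $-d/r<\gamma<d/r'$. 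For the other endpoint $\theta=t/s$, apply Corollary \ref{cor2.15} with base weight $w(x)=\langle x\rangle$ and exponents $\alpha,\beta,\gamma$; the scaling $1/r=(t/s)(1/p)+(1-t/s)(1/q)$ and $\gamma=(t/s)\alpha+(1-t/s)\beta$ are exactly the limiting cases of the hypotheses, while the $A_p,A_q$ conditions on $\langle x\rangle^{\alpha p}$ and $\langle x\rangle^{\beta q}$ have already been checked.

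For the intermediate range $t/s<\theta<1$ I would invoke Theorem \ref{thm2.5} with $\tilde w(x)=\langle x\rangle^\mu$, $\mu=\theta\alpha+(1-\theta)\beta$, and with the analytic family $w_z(x)=\langle x\rangle^{z\alpha+(1-z)\beta}$ (dominated by $\langle x\rangle^\alpha+\langle x\rangle^\beta\in L^1_{\mathrm{loc}}$), so that the hypotheses of Theorem \ref{thm2.4} are met as in the proof of Corollary \ref{cor2.15}. What remains is to verify $(\tilde w^a,w^r)\in A_{a,r}^{\theta s-t}$ with $1/a=\theta/p+(1-\theta)/q$, and here is where the main bookkeeping occurs: using Lemma \ref{lemma3.2}, the five conditions it requires will reduce — after using $\mu=\theta\alpha+(1-\theta)\beta$ and the scaling identity $1/a-\theta s/d=1/r-t/d$ — to the three hypotheses in the statement, namely $1/r\leq\theta/p+(1-\theta)/q$, $\gamma\leq\theta\alpha+(1-\theta)\beta$, and the two-sided inequality on $1/r-t/d$. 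Finally, since $w^r=\langle x\rangle^{\gamma r}$ satisfies the cone growth condition, the polynomial produced by Theorem \ref{thm2.5} must vanish by Lemma \ref{lemma2.4}, yielding the stated estimate. The verification for the Bessel operator $J$ runs identically, using $J^z$ in place of $D^z$ in Theorem \ref{thm2.4} (its analytic family and sparse domination are insensitive to the $1$ in $\langle\xi\rangle$). The main technical obstacle is the algebraic check that the five inequalities of Lemma \ref{lemma3.2}, applied to the auxiliary pair $(\tilde w^a,w^r)$, are equivalent to the three hypotheses stated in the theorem; this is essentially a routine but careful rewriting using the definition of $a$ and $\mu$.
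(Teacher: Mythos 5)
Your proposal is correct and follows exactly the route the paper takes: the paper's proof of Theorem~\ref{thm3.2} simply invokes Corollary~\ref{cor2.1} (endpoint $\theta=1$), Corollary~\ref{cor2.15} (endpoint $\theta=t/s$), and Theorem~\ref{thm2.5} with $\tilde w(x)=\langle x\rangle^{\mu}$, $\mu=\theta\alpha+(1-\theta)\beta$, relying on Lemma~\ref{lemma3.2} to check the $A_{a,r}^{\theta s-t}$ condition. One small clarification worth adding, which the paper also glosses over: at $\theta=t/s$ the hypotheses force $1/r=(t/s)/p+(1-t/s)/q$ exactly but only $\gamma\le\mu$, so one first applies Corollary~\ref{cor2.15} with exponent $\mu$ and then uses $\langle x\rangle^{\gamma}\le\langle x\rangle^{\mu}$ to pass to the weight $\langle x\rangle^{\gamma}$.
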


\section{Non-homogeneous derivatives}\label{nonhomderivatives}

In this section our goal is to obtain the same Gagliardo-Nirenberg inequalities we have obtained so far, but we are interested in non-homogeneous Bessel type derivatives now. That is, we define the operator \(J^z\) by
\[
J^zf=(\langle \cdot\rangle^z\hat{f})^\vee,
\]
which is a well defined continuous linear operator on tempered distributions \(f\in \mcal{S}'(\R^d)\). As before, the crucial estimates are
\begin{align}
\|J^{-\alpha}f\|_{L^q(w)}&\lesssim \|f\|_{L^p(v)} \label{eq4.1}\\
\|J^{i\tau}f\|_{L^p(w)}&\lesssim C(\tau)\|f\|_{L^p(v)}.\label{eq4.2}
\end{align}
We start by considering estimate \eqref{eq4.1}. The idea is to compare \(J^{-\alpha}\) to \(\I_\alpha\). We may write \(J^{-\alpha}f= G_\alpha *f\), where \(G_\alpha=(\langle \cdot\rangle^{-\alpha})^\vee\). Several properties of \(G_\alpha\) are known and we collect them in the following proposition. For a reference see for instance \cite{MGrafakos}. 
\begin{prop}\label{prop4.1}
For \(\alpha>0\), \(G_\alpha\in L^1, \|G_\alpha\|_{L^1}=1, G_\alpha>0, G_\alpha\in C^\infty(\R^d\setminus\{0\})\) and 
\begin{align*}
    G_\alpha(x)&\lesssim_{\alpha,d}e^{-\pi |x|},\ |x|\geq 1/\pi;\\
    G_\alpha(x)&\lesssim_{\alpha,d}\begin{dcases}
    |x|^{\alpha-d},&0<\alpha<d\\
    \log(1/\pi|x|)+1,&\alpha=d\\
    1,&\alpha>d
    \end{dcases},\ |x|\leq 1/\pi
\end{align*}
\end{prop}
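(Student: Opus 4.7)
The plan is to start from the subordination identity
\[
\langle\xi\rangle^{-\alpha}=(1+|\xi|^2)^{-\alpha/2}=\frac{1}{\Gamma(\alpha/2)}\int_0^\infty e^{-(1+|\xi|^2)t}\,t^{\alpha/2-1}\,dt,
\]
which reduces $\widehat{G_\alpha}(\xi)=\langle\xi\rangle^{-\alpha}$ to a weighted superposition of Gaussians. Inverting each Gaussian with the convention $\hat f(\xi)=\int e^{-2\pi i x\cdot\xi}f(x)dx$ gives $(e^{-t|\cdot|^2})^\vee(x)=(\pi/t)^{d/2}e^{-\pi^2|x|^2/t}$, and hence the representation
\[
G_\alpha(x)=\frac{\pi^{d/2}}{\Gamma(\alpha/2)}\int_0^\infty e^{-t-\pi^2|x|^2/t}\,t^{(\alpha-d)/2-1}\,dt.
\]
Everything else will be read off this formula. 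Positivity is immediate, $\|G_\alpha\|_{L^1}=\widehat{G_\alpha}(0)=1$ (once integrability is known), and the bound $G_\alpha>0$ is clear. Smoothness on $\R^d\setminus\{0\}$ follows from differentiating under the integral: for any $x_0\neq 0$, a neighborhood of $x_0$ is bounded away from the origin, so the factor $e^{-\pi^2|x|^2/t}$ yields a uniform dominating function that kills both the $t\to 0$ singularity and any polynomial factors in $|x|$ produced by differentiating, justifying $G_\alpha\in C^\infty(\R^d\setminus\{0\})$.

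For the small-$|x|$ bounds I would rescale by $t=|x|^2 s$, obtaining
\[
G_\alpha(x)=\frac{\pi^{d/2}|x|^{\alpha-d}}{\Gamma(\alpha/2)}\int_0^\infty e^{-|x|^2 s-\pi^2/s}\,s^{(\alpha-d)/2-1}\,ds,
\]
and then analyze the three regimes separately. For $0<\alpha<d$ I split the $s$-integral at $s=1$: on $(0,1)$ use $e^{-|x|^2s}\le 1$ and let $e^{-\pi^2/s}$ kill the singularity at $0$, while on $(1,\infty)$ use $e^{-\pi^2/s}\le 1$ and note $(\alpha-d)/2-1<-1$ ensures integrability at $\infty$. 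This gives the stated $|x|^{\alpha-d}$ bound. For $\alpha>d$ the integral $\int_0^\infty e^{-t}t^{(\alpha-d)/2-1}dt=\Gamma((\alpha-d)/2)$ is finite, so dropping $e^{-\pi^2|x|^2/t}\le 1$ in the original representation yields $G_\alpha\lesssim 1$. The critical case $\alpha=d$ requires the extra care: split the original integral over $t<\pi^2|x|^2$, $\pi^2|x|^2\le t\le 1$, and $t>1$; only the middle piece is logarithmic, contributing $\int_{\pi^2|x|^2}^1 t^{-1}dt\sim\log(1/(\pi|x|))$, while the other two pieces are bounded by constants via the symmetry $t\leftrightarrow \pi^2|x|^2/t$.

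For the exponential decay I would exploit the AM--GM bound $t+\pi^2|x|^2/t\geq 2\pi|x|$. Writing $e^{-t-\pi^2|x|^2/t}=e^{-(1-\varepsilon)(t+\pi^2|x|^2/t)}\,e^{-\varepsilon(t+\pi^2|x|^2/t)}$ with a small $\varepsilon\in(0,1/2)$ pulls out a factor $e^{-2(1-\varepsilon)\pi|x|}$ and leaves the integral
\[
\int_0^\infty e^{-\varepsilon t-\varepsilon\pi^2|x|^2/t}\,t^{(\alpha-d)/2-1}\,dt,
\]
which after substituting $t=\pi|x|u$ becomes $(\pi|x|)^{(\alpha-d)/2}\int_0^\infty e^{-\varepsilon\pi|x|(u+1/u)}u^{(\alpha-d)/2-1}du$. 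For $|x|\geq 1/\pi$ the latter integral is bounded by the constant $\int_0^\infty e^{-\varepsilon(u+1/u)}u^{(\alpha-d)/2-1}du$, giving $G_\alpha(x)\lesssim |x|^{(\alpha-d)/2}e^{-2(1-\varepsilon)\pi|x|}$; choosing $\varepsilon$ small enough so that $2(1-\varepsilon)>1$ lets us absorb the polynomial factor into a tiny bit of the exponential to conclude $G_\alpha(x)\lesssim e^{-\pi|x|}$ on $\{|x|\geq 1/\pi\}$. The main subtlety I expect is this last step — getting the clean rate $e^{-\pi|x|}$ rather than $e^{-c|x|}$ for some unspecified $c<2\pi$ — but it amounts only to choosing $\varepsilon$ appropriately, and the true asymptotic rate $e^{-2\pi|x|}$ (from Laplace's method at the saddle $u=1$) leaves ample margin.
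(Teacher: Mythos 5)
Your proof is correct. The paper does not actually supply a proof of this proposition (it cites Grafakos), but your derivation via the subordination identity and the resulting representation $G_\alpha(x)=\frac{\pi^{d/2}}{\Gamma(\alpha/2)}\int_0^\infty e^{-t-\pi^2|x|^2/t}\,t^{(\alpha-d)/2-1}\,dt$ is the standard one, and it coincides with the technique the paper itself employs for the analogous Proposition~\ref{prop4.3}; the only cosmetic difference is in the exponential-decay step, where the paper averages the two lower bounds $t+\pi^2|x|^2/t\geq t+1/t$ (using $\pi|x|\geq 1$) and $t+\pi^2|x|^2/t\geq 2\pi|x|$ to get $e^{-t-\pi^2|x|^2/t}\leq e^{-t/2-1/(2t)-\pi|x|}$ directly, avoiding your $\varepsilon$-split and rescaling, but both routes are equally elementary and correct.
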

This way, if \(0<\alpha<d\) it follows that \(G_\alpha(x)\lesssim_{\alpha,d}|x|^{\alpha-d}\). So, if \(f\in \mcal{S}(\R^d)\),
\[
|J^{-\alpha}f(x)|\leq \int G_{\alpha}(y)|f(x-y)|dy\lesssim \int |y|^{\alpha-d}|f(x-y)|dy\lesssim \I_\alpha( |f|)(x).
\]
This comparison leads immediately to the following result.
\begin{thm}\label{thm4.1}
Let \(1<p\leq q<\infty\) and \(0<\alpha <d\). Suppose that \((v, w)\in A_{p,q}^\alpha\) and \(v^{-\frac{p'}{p}}, w\in A_{\infty}\). Then, it follows that
\[
\|J^{-\alpha}f\|_{L^q(w)}\lesssim \| f\|_{L^p(v)},\ \forall f\in \mcal{S}(\R^d).
\]
\end{thm}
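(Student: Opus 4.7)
The plan is to reduce the inequality for $J^{-\alpha}$ to the already-established weighted Hardy--Littlewood--Sobolev inequality (Proposition \ref{weighted hardy-littlewood-sobolev}) via the pointwise comparison $|J^{-\alpha}f(x)|\lesssim \mathcal{I}_\alpha(|f|)(x)$. The paragraph preceding the statement already does most of this work, so the proposal amounts to filling in the two small gaps: (i) extending the near-origin Bessel kernel bound $G_\alpha(y)\lesssim |y|^{\alpha-d}$ from $|y|\leq 1/\pi$ to all of $\R^d\setminus\{0\}$, and (ii) explicitly invoking Proposition \ref{weighted hardy-littlewood-sobolev}.

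First I would establish the global kernel bound $G_\alpha(y)\lesssim_{\alpha,d}|y|^{\alpha-d}$ for every $y\neq 0$. For $|y|\leq 1/\pi$ this is immediate from Proposition \ref{prop4.1}. For $|y|\geq 1/\pi$, since $\alpha-d<0$, the function $|y|^{\alpha-d}$ is bounded below by a positive constant depending only on $\alpha$ and $d$ on this set; by Proposition \ref{prop4.1} we have $G_\alpha(y)\lesssim e^{-\pi|y|}$, and the quotient $e^{-\pi|y|}|y|^{d-\alpha}$ is bounded on $\{|y|\geq 1/\pi\}$ because exponential decay dominates any polynomial growth. Combining the two regimes yields the global bound.

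Next, for any $f\in \mathcal{S}(\R^d)$, writing $J^{-\alpha}f=G_\alpha\ast f$ and using the global bound together with $|f|\in \mathcal{S}(\R^d)\subseteq L^\infty$ so that the integrals are absolutely convergent, we get
\[
|J^{-\alpha}f(x)|\leq \int_{\R^d} G_\alpha(y)|f(x-y)|\,dy \lesssim_{\alpha,d} \int_{\R^d} |y|^{\alpha-d}|f(x-y)|\,dy = \mathcal{I}_\alpha(|f|)(x).
\]
Finally, since $1<p\leq q<\infty$, $0<\alpha<d$, $(v,w)\in A_{p,q}^\alpha$, and $v^{-p'/p}, w\in A_\infty$, Proposition \ref{weighted hardy-littlewood-sobolev} applies to $|f|\in L^p(v)$ (note $\||f|\|_{L^p(v)}=\|f\|_{L^p(v)}$) and gives
\[
\|J^{-\alpha}f\|_{L^q(w)} \lesssim \|\mathcal{I}_\alpha(|f|)\|_{L^q(w)} \lesssim \|f\|_{L^p(v)},
\]
which is the claim.

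There is no serious obstacle: the only subtlety is the routine verification that the singular part of the Bessel kernel dominates its exponential tail when measured against $|y|^{\alpha-d}$, and once that is checked the result is a one-line consequence of Proposition \ref{weighted hardy-littlewood-sobolev}. I would not attempt to sharpen the argument further here, since the same strategy will be needed again to handle the imaginary-order case $J^{i\tau}$ in the analogue of Theorem \ref{thm2.3}, and the present theorem is the simpler of the two reductions.
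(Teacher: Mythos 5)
Your proposal matches the paper's argument exactly: the paper derives the global bound $G_\alpha(x)\lesssim_{\alpha,d}|x|^{\alpha-d}$ from Proposition \ref{prop4.1} (implicitly combining the near-origin bound with the exponential tail, as you spell out), uses it to obtain $|J^{-\alpha}f|\lesssim \mathcal{I}_\alpha(|f|)$ pointwise, and then invokes Proposition \ref{weighted hardy-littlewood-sobolev}. Your write-up simply makes explicit the small verification that the exponential decay dominates $|y|^{\alpha-d}$ on $\{|y|\geq 1/\pi\}$, which the paper leaves to the reader.
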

We now turn to estimate \eqref{eq4.2}. The idea is to apply again Lerner's domination theorem. Of course we know that the operators \(J^{i\tau}\) are classical pseudodifferential operators of order zero and therefore they are Calderón-Zygmund operators. That means that \(J^{i\tau}\) can be controlled by a sparse operator. The problem here is that we need to understand precisely how the implicit constant depends on \(\tau\), so that we can be sure that the three lines lemma can be properly applied. For this reason we will approach the problem from a different direction. We will first prove an identity very similar to identity \eqref{eq2.6}, with precise information about the growth of the constants. 

Define the function
\[
A(z)=\int_0^{+\infty}x^{z-1}(e^{-x/2}-e^{-2x})dx,\ \mbox{Re}(z)\geq 0.
\]
Note that \(A(z)=(2^z-2^{-z})\Gamma(z)\). In particular it follows that \(A\) is zero at the points
\[
z=\frac{2\pi i k}{\log 4},\ k\in \Z\setminus \{0\}. 
\]
If we make the change of variables \(x=\eta t, \eta>0\), then
\[
A(z)=\eta^z\int_0^{+\infty}t^{z-1}(e^{-\eta t/2}-e^{-2\eta t})dt.
\]
Setting \(\eta=1+|\xi|^2\) and \(z=i\tau/2\) it follows that 
\[
\langle \xi\rangle^{i\tau}=\frac{1}{A(-i\tau/2)}\int_0^{+\infty}t^{-i\frac{\tau}{2}-1}f_t(\xi)dt,
\]
where
\[
f_t(\xi)=e^{-\frac{1}{2}(1+|\xi|^2)t}-e^{-2(1+|\xi|^2)t}
\]
and \(\tau\in \R\) is such that \(A(-i\tau/2)\neq 0\). That, is \(\tau\notin Z\) where \(Z=\{4\pi k/\log 4: k\in \Z\setminus\{0\}\}\). Note that we can explicitly compute the inverse Fourier transform of \(f_t\),
\[
(f_t)^\vee(x)=\sqrt{2\pi}^d\frac{e^{-t/2}}{\sqrt{t}^d}e^{-2\pi^2\frac{|x|^2}{t}}-\sqrt{\frac{\pi}{2}}^d\frac{e^{-2t}}{\sqrt{t}^d}e^{-\pi^2\frac{|x|^2}{2t}}.
\]
Now we use this to compute the Fourier transform of \(\langle \xi\rangle^{i\tau}\). Given \(\phi\in\mcal{S}(\R^d)\) and \(\tau\notin Z\) we have that
\[
\begin{split}
  \langle (\langle \cdot \rangle^{i\tau})^\vee,\phi\rangle&=\int \langle \xi\rangle^{i\tau}\check{\phi}(\xi)d\xi =  \int \frac{1}{A(-i\tau/2)}\int_0^{\infty}t^{-i\frac{\tau}{2}-1}f_t(\xi)\check{\phi}(\xi)dtd\xi\\
  &=\frac{1}{A(-i\tau/2)}\int_0^{\infty}t^{-i\frac{\tau}{2}-1}\int f_t(\xi)\check{\phi}(\xi)d\xi dt\\
  &=\frac{1}{A(-i\tau/2)}\int_0^{\infty}t^{-i\frac{\tau}{2}-1}\int \check{f}_t(x)\phi(x)dx dt\\
  &=\frac{1}{A(-i\tau/2)}\int_0^\infty t^{-i\frac{\tau}{2}-1}\int \phi(x)\left(\sqrt{2\pi}^d\frac{e^{-t/2}}{\sqrt{t}^d}e^{-2\pi^2\frac{|x|^2}{t}}-\sqrt{\frac{\pi}{2}}^d\frac{e^{-2t}}{\sqrt{t}^d}e^{-\pi^2\frac{|x|^2}{2t}}\right)dxdt.
\end{split}
\]
Now suppose that \(\phi(0)=0\). Then, \(|\phi(x)|\leq \|\nabla \phi\|_{L^\infty}|x|\). This estimate will allow us to switch the order of integration. To show this we must prove that
\[
\int_0^\infty\int_{\R^d}\left|t^{-i\frac{\tau}{2}-1}\phi(x)\left(\sqrt{2\pi}^d\frac{e^{-t/2}}{\sqrt{t}^d}e^{-2\pi^2\frac{|x|^2}{t}}-\sqrt{\frac{\pi}{2}}^d\frac{e^{-2t}}{\sqrt{t}^d}e^{-\pi^2\frac{|x|^2}{2t}}\right)\right|dxdt<\infty. 
\]
By the triangle inequality this double integral is bounded by
\[
\sqrt{2\pi}^d\int_0^\infty \int_{\R^d}t^{-1}|\phi(x)|\frac{e^{-t/2}}{\sqrt{t}^d}e^{-2\pi^2\frac{|x|^2}{t}}dxdt+\sqrt{\frac{\pi}{2}}^d\int_0^\infty\int_{\R^d}t^{-1}|\phi(x)|\frac{e^{-2t}}{\sqrt{t}^d}e^{-\pi^2\frac{|x|^2}{2t}}dxdt. 
\]
These two terms are a multiple of each other and so it suffices to check that the first one is finite. When \(t\) is small we use the estimate \(e^{-t/2}\leq 1\) and when \(t\) is large \(e^{-2\pi^2|x|^2/t}\leq 1\). Doing this the first term is bounded by
\[
\sqrt{2\pi}^d\int_0^1\int_{\R^d}|\phi(x)|t^{-\frac{d}{2}-1}e^{-2\pi^2\frac{|x|^2}{t}}dxdt+\sqrt{2\pi}^d\int_1^\infty\int_{\R^d}|\phi(x)|t^{-\frac{d}{2}-1}e^{-t/2}dxdt=:I_1+I_2.
\]
The term \(I_2\) is clearly finite, so we turn our attention to \(I_1\). We have that
\[
I_1=\sqrt{2\pi}^d\int_{\R^d}|\phi(x)|\int_0^1t^{-\frac{d}{2}-1}e^{-2\pi^2\frac{|x|^2}{t}}dtdx.
\]
Now consider the change of variables \(s=|x|^2/t\),
\[
\begin{split}
I_1&=\sqrt{2\pi}^d\int_{\R^d}|\phi(x)|\int_{|x|^2}^\infty \frac{s^{\frac{d}{2}}}{|x|^d}e^{-2\pi^2 s}dsdx\\
&\leq \int_{\R^d}\frac{|\phi(x)|}{|x|^d}dx\int_0^\infty s^{\frac{d}{2}}e^{-2\pi^2s}ds
\end{split}
\]
and this is finite since \(|\phi(x)|\lesssim |x|\). Therefore we may apply Fubini's theorem to conclude that
\[
\langle (\langle \cdot\rangle^{i\tau})^\vee,\phi\rangle=\frac{1}{A(-i\tau/2)}\int_{\R^d}\phi(x)\int_0^\infty t^{-i\frac{\tau}{2}-1}\left[\sqrt{2\pi}^d\frac{e^{-t/2}}{\sqrt{t}^d}e^{-2\pi^2\frac{|x|^2}{t}}-\sqrt{\frac{\pi}{2}}^d\frac{e^{-2t}}{\sqrt{t}^d}e^{-\pi^2\frac{|x|^2}{2t}}\right]dtdx.
\]
This expression can be simplified. By splitting into a sum of two integrals and making the changes of variables \(t=2s\) and \(t=s/2\) in each one we get
\[
\begin{split}
&\sqrt{2\pi}^d\int_0^\infty t^{-i\frac{\tau}{2}-1-\frac{d}{2}}e^{-t/2-2\pi^2|x|^2/t}dt-\sqrt{\frac{\pi}{2}}^d\int_0^\infty t^{-i\frac{\tau}{2}-1-\frac{d}{2}}e^{-2t-\pi^2\frac{|x|^2}{2t}}dt\\
&=(2^{-i\frac{\tau}{2}}-2^{i\frac{\tau}{2}})\pi^\frac{d}{2}\int_0^\infty s^{-i\frac{\tau}{2}-1-\frac{d}{2}}e^{-s-\pi^2\frac{|x|^2}{s}}ds.
\end{split}
\]
Therefore, noting that \(A(-i\tau/2)=(2^{-i\tau/2}-2^{i\tau/2})\Gamma(-i\tau/2)\) we get
\[
\langle (\langle \cdot\rangle^{i\tau})^\vee,\phi\rangle=\frac{\pi^\frac{d}{2}}{\Gamma(-i\tau/2)}\int_{\R^d}\phi(x)\int_0^\infty s^{-i\frac{\tau}{2}-1-\frac{d}{2}}e^{-s-\pi^2\frac{|x|^2}{s}}dsdx.
\]
Note that this equality is actually valid for all \(\tau\). Indeed, we have just shown it holds for \(\tau\notin Z\), but both sides are continuous in \(\tau\), so the equality remains true for \(\tau\in Z\) also. 

Put
\[
G_{i\tau}(x)=\frac{\pi^\frac{d}{2}}{\Gamma(-i\tau/2)}\int_0^\infty s^{-i\frac{\tau}{2}-1-\frac{d}{2}}e^{-s-\pi^2\frac{|x|^2}{s}}ds,
\]
and \(\tilde{W}_\tau=(\langle \cdot \rangle^{i\tau})^\vee\). We have shown the following lemma.
\begin{lemma}\label{lemma4.1}
For \(\phi\in \mcal{S}(\R^d)\) with \(\phi(0)=0\),
\[
\langle \tilde{W}_\tau,\phi\rangle=\int_{\R^d}\phi(x)G_{i\tau}(x)dx,\ \forall \tau\in \R. 
\]
\end{lemma}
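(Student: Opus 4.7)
The plan is to derive the identity directly from a Mellin-type integral representation of $\langle\xi\rangle^{i\tau}$, reduce to an absolutely convergent double integral via Fubini (which is where the hypothesis $\phi(0)=0$ plays a role), and simplify the resulting time integral to recognize $G_{i\tau}$.

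First, I would start from the definition $A(z)=(2^z - 2^{-z})\Gamma(z)=\int_0^\infty x^{z-1}(e^{-x/2}-e^{-2x})dx$. Rescaling $x=(1+|\xi|^2)t$ and setting $z=-i\tau/2$ yields, for $\tau\notin Z$,
\[
\langle \xi\rangle^{i\tau}=\frac{1}{A(-i\tau/2)}\int_0^\infty t^{-i\tau/2-1}f_t(\xi)\,dt,
\]
with $f_t(\xi)=e^{-(1+|\xi|^2)t/2}-e^{-2(1+|\xi|^2)t}$. Pairing $\tilde W_\tau$ with $\phi\in\mathcal{S}(\R^d)$ via $\langle \tilde W_\tau,\phi\rangle=\int \langle\xi\rangle^{i\tau}\check\phi(\xi)\,d\xi$, I would substitute this representation and move $\check\phi$ inside. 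Since $f_t$ is a Gaussian-type symbol with explicit inverse Fourier transform
\[
(f_t)^\vee(x)=(2\pi)^{d/2}\frac{e^{-t/2}}{t^{d/2}}e^{-2\pi^2|x|^2/t}-(\pi/2)^{d/2}\frac{e^{-2t}}{t^{d/2}}e^{-\pi^2|x|^2/(2t)},
\]
Parseval turns $\int f_t\check\phi$ into $\int (f_t)^\vee\phi$.

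The crux is to swap the $dt$ and $dx$ integrals. This is the main obstacle: the factor $t^{-1-d/2}$ from $(f_t)^\vee(x)t^{-i\tau/2-1}$ is singular at $t=0$, and without an assumption on $\phi(0)$ the iterated integral diverges near the origin. Using $\phi(0)=0$, I get $|\phi(x)|\lesssim|x|$ from the mean value theorem, so after the change of variable $s=|x|^2/t$ the $t$-integral of the kernel transforms into $|x|^{-d}$ times an absolutely convergent integral, and the resulting bound $\int |\phi(x)||x|^{-d}dx$ is finite since $|\phi(x)||x|^{-d}\lesssim |x|^{1-d}$ near $0$ and $\phi$ decays rapidly at infinity. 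The large-$t$ and large-$|x|$ regions are harmless because of the $e^{-t/2}$, $e^{-2t}$ factors and the Schwartz decay of $\phi$.

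Once Fubini is justified, I would simplify the inner $t$-integral by splitting the two Gaussian terms and changing variables $t=2s$ and $t=s/2$ respectively; this produces a common factor $(2^{-i\tau/2}-2^{i\tau/2})\pi^{d/2}s^{-i\tau/2-1-d/2}e^{-s-\pi^2|x|^2/s}$, and the prefactor $2^{-i\tau/2}-2^{i\tau/2}$ cancels with the same factor in $A(-i\tau/2)=(2^{-i\tau/2}-2^{i\tau/2})\Gamma(-i\tau/2)$, leaving exactly $G_{i\tau}(x)$. Finally, I would extend from $\tau\notin Z$ to all $\tau\in\R$ by noting that both sides of the claimed identity depend continuously on $\tau$: the left-hand side because $\langle\xi\rangle^{i\tau}\check\phi(\xi)$ is dominated uniformly in $\tau$ by $|\check\phi(\xi)|$, and the right-hand side because $\Gamma(-i\tau/2)$ has no zeros and the inner $ds$-integral is continuous in $\tau$ by dominated convergence, so the removable singularities at $\tau\in Z$ can be filled in.
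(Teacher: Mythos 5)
Your proposal reproduces the paper's argument essentially step by step: the Mellin representation of $\langle\xi\rangle^{i\tau}$ via $A(-i\tau/2)$, passage to $(f_t)^\vee$ by Parseval, the Fubini interchange justified by $|\phi(x)|\lesssim|x|$ (the only place $\phi(0)=0$ enters), the substitutions $t=2s$ and $t=s/2$ merging the two Gaussians with cancellation of $2^{-i\tau/2}-2^{i\tau/2}$ against the same factor in $A(-i\tau/2)$, and extension from $\tau\notin Z$ to all $\tau$ by continuity. This is correct and is the same route the paper takes.
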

Arguing as in the proof of proposition \ref{prop4.1} we can show the following.
\begin{prop}\label{prop4.3}
We have that
\[
|G_{i\tau}(x)|\lesssim_d\frac{1}{|\Gamma(-i\tau/2)|}\begin{dcases}
e^{-\pi|x|},&|x|\geq 1/\pi\\
|x|^{-d},&|x|\leq 1/\pi. 
\end{dcases}
\]
\end{prop}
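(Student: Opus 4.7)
The plan is to reduce everything to a real integral by taking the modulus inside, perform a scaling to factor out the $|x|$-dependence, and then treat the large-$|x|$ and small-$|x|$ regimes separately in the style of the proof of Proposition \ref{prop4.1}. Since $|s^{-i\tau/2-1-d/2}|=s^{-1-d/2}$ for $s>0$, the trivial estimate gives
\[
|G_{i\tau}(x)|\leq \frac{\pi^{d/2}}{|\Gamma(-i\tau/2)|}\int_0^\infty s^{-1-d/2}e^{-s-\pi^2|x|^2/s}\,ds,
\]
so the $|\Gamma(-i\tau/2)|^{-1}$ appears naturally and the remaining task is to estimate a $\tau$-independent integral.

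Next I would perform the change of variables $s=\pi|x|u$, which yields
\[
\int_0^\infty s^{-1-d/2}e^{-s-\pi^2|x|^2/s}\,ds=(\pi|x|)^{-d/2}\int_0^\infty u^{-1-d/2}e^{-\pi|x|(u+1/u)}\,du.
\]
The AM--GM inequality $u+1/u\geq 2$ is the key structural fact; in particular $u+1/u\geq 1+(u+1/u)/2$ for every $u>0$.

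For the regime $|x|\geq 1/\pi$, I would use this to extract the exponential weight: $e^{-\pi|x|(u+1/u)}\leq e^{-\pi|x|}e^{-\pi|x|(u+1/u)/2}$. The remaining integral is then bounded, uniformly in $|x|\geq 1/\pi$, by splitting at $u=1$: on $(1,\infty)$ the factor $e^{-\pi|x|u/2}\leq 1$ and $u^{-1-d/2}$ is integrable at infinity; on $(0,1)$ one uses $e^{-\pi|x|/(2u)}\leq e^{-1/(2u)}$ (since $\pi|x|\geq 1$), which tames the singularity of $u^{-1-d/2}$ at the origin. Finally the prefactor $(\pi|x|)^{-d/2}$ is bounded by a constant when $|x|\geq 1/\pi$, giving the desired $e^{-\pi|x|}$ bound.

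For $|x|\leq 1/\pi$, the exponential factor $e^{-\pi|x|(u+1/u)}$ is of order $1$, so the trouble is only the singularity at $u=0$. Splitting again at $u=1$, on $(1,\infty)$ we have $\int_1^\infty u^{-1-d/2}\,du<\infty$, while on $(0,1)$ the substitution $v=\pi|x|/u$ converts $\int_0^1 u^{-1-d/2}e^{-\pi|x|/u}\,du$ into $(\pi|x|)^{-d/2}\int_{\pi|x|}^\infty v^{d/2-1}e^{-v}\,dv\leq (\pi|x|)^{-d/2}\Gamma(d/2)$. Combining this with the $(\pi|x|)^{-d/2}$ outside the integral produces the $|x|^{-d}$ bound. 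The main obstacle (and the only nontrivial step) is handling the $u=0$ singularity of $u^{-1-d/2}$: it has to be absorbed by the $e^{-\pi|x|/u}$ factor via the above change of variables, which is what yields the extra $|x|^{-d/2}$ factor needed to upgrade the scaling prefactor $|x|^{-d/2}$ into the full $|x|^{-d}$ singularity.
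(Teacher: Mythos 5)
Your argument is correct, and for the small-$|x|$ regime it takes a genuinely different route from the paper. Both proofs begin by taking the modulus inside to pull out the factor $1/|\Gamma(-i\tau/2)|$, and for $|x|\geq 1/\pi$ both use the same AM--GM observation $s+\pi^2|x|^2/s\geq\pi|x|+\tfrac12(s+\pi^2|x|^2/s)$ (you perform the extra scaling $s=\pi|x|u$, but the mechanism is identical). Where you diverge is $|x|\leq 1/\pi$: the paper decomposes the integral over $(0,\infty)$ into the three pieces $(0,|x|^2)$, $(|x|^2,1/\pi^2)$, $(1/\pi^2,\infty)$, dropping a different factor of the exponential on each piece to match the claimed $|x|^{-d}$ singularity; you instead keep the scaling substitution $s=\pi|x|u$ so that the prefactor $(\pi|x|)^{-d/2}$ appears automatically, and then a single further change of variables $v=\pi|x|/u$ on $(0,1)$ converts the near-origin contribution into an incomplete Gamma integral yielding the second $(\pi|x|)^{-d/2}$ factor. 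Your version is slightly cleaner in that one substitution does the bookkeeping that the paper's three-way split does by hand, and it makes transparent why the bound degenerates from $|x|^{-d}$ to $e^{-\pi|x|}$ as $|x|$ crosses $1/\pi$; the paper's approach has the mild advantage that it stays closer to its earlier proof of Proposition \ref{prop4.1}, where the same three-region split is used.
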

\begin{proof}
We include the proof for completeness. First suppose that \(|x|\geq 1/\pi\). We have that
\[
s+\pi^2\frac{|x|^2}{s}\geq s+\frac{1}{s}
\]
and 
\[
s+\pi^2\frac{|x|^2}{s}\geq \min_{s>0}s+\pi^2\frac{|x|^2}{s}=2\pi |x|.
\]
Therefore,
\[
-s-\pi^2\frac{|x|^2}{s}\leq -\frac{s}{2}-\frac{1}{2s}-\pi|x|.
\]
Using this estimate we obtain
\[
|G_{i\tau}(x)|\leq \frac{\pi^{d/2}}{|\Gamma(-i\tau/2)|}\int_0^\infty s^{-1-\frac{d}{2}}e^{-s/2-\frac{1}{2s}}dx e^{-\pi|x|}\lesssim_d\frac{e^{-\pi|x|}}{|\Gamma(-i\tau/2)|}.
\]
Now suppose that \(|x|\leq 1/\pi\), and write \(G_{i\tau}=G^1+G^2+G^3\), where
\begin{align*}
G^1(x)&=\frac{\pi^{d/2}}{\Gamma(-i\tau/2)}\int_0^{|x|^2}s^{-i\frac{\tau}{2}-1-\frac{d}{2}}e^{-s-\pi^2\frac{|x|^2}{s}}ds\\
G^2(x)&=\frac{\pi^{d/2}}{\Gamma(-i\tau/2)}\int_{|x|^2}^{1/\pi^2}s^{-i\frac{\tau}{2}-1-\frac{d}{2}}e^{-s-\pi^2\frac{|x|^2}{s}}ds\\
G^3(x)&=\frac{\pi^{d/2}}{\Gamma(-i\tau/2)}\int_{1/\pi^2}^{+\infty}s^{-i\frac{\tau}{2}-1-\frac{d}{2}}e^{-s-\pi^2\frac{|x|^2}{s}}ds.
\end{align*}
Using the estimate \(e^{-s-\pi^2|x|^2/s}\leq e^{-\pi^2|x|^2/s}\) in \(G^1\), the estimate \(e^{-s-\pi^2|x|^2/s}\leq 1\) in \(G^2\) and \(e^{-s-\pi^2|x|^2/s}\leq e^{-s}\) in \(G^3\), the result follows.
\end{proof}
Using these estimates we can now prove the next lemma.
\begin{lemma}\label{lemma4.2}
For \(\tau\neq 0\) let \(\delta_n=e^{-2\pi \frac{n}{|\tau|}}\). Then the limit 
\[
\lim_n\int_{\delta_n\leq |x|\leq 1}G_{i\tau}(x)dx
\] 
exists.
\end{lemma}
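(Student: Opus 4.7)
The idea is to isolate the principal singularity of $G_{i\tau}$ at the origin, which will turn out to be a constant multiple of $|x|^{-d-i\tau}$, and then exploit the precise oscillation built into the choice of $\delta_n$ to cancel that singular part over each annulus. First I would perform the change of variables $s=\pi^2|x|^2/u$ in the integral representation of $G_{i\tau}$ to obtain the explicit formula
\[
G_{i\tau}(x) \;=\; \frac{\pi^{-d/2-i\tau}}{\Gamma(-i\tau/2)}\,|x|^{-d-i\tau}\,F_\tau(\pi|x|),\qquad F_\tau(r):=\int_0^{\infty}u^{i\tau/2+d/2-1}e^{-r^2/u-u}\,du.
\]
Dominated convergence gives $F_\tau(r)\to \Gamma(i\tau/2+d/2)$ as $r\to 0^+$, so writing $G_{i\tau}(x)=C_\tau|x|^{-d-i\tau}+R_\tau(x)$ with $C_\tau:=\pi^{-d/2-i\tau}\Gamma(i\tau/2+d/2)/\Gamma(-i\tau/2)$ isolates the leading singularity in a clean way.

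The next step is to show that $R_\tau\in L^1(B(0,1))$. Writing
\[
F_\tau(r)-\Gamma(i\tau/2+d/2) \;=\; \int_0^\infty u^{i\tau/2+d/2-1}\bigl(e^{-r^2/u}-1\bigr)e^{-u}\,du,
\]
splitting the integration at $u=r^2$, and invoking $|e^{-r^2/u}-1|\le \min\{1,r^2/u\}$, elementary estimates yield a bound $|F_\tau(r)-\Gamma(i\tau/2+d/2)|\lesssim_{d,\tau} r^{\kappa(d)}(1+|\log r|)$ for some $\kappa(d)>0$ (with the logarithmic factor present only in the borderline case $d=2$). Multiplying by the singular factor gives $|R_\tau(x)|\lesssim |x|^{\kappa(d)-d}(1+|\log |x||)$ on $|x|\le 1$, which is locally integrable.

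Finally, the choice $\delta_n=e^{-2\pi n/|\tau|}$ was engineered precisely so that $\delta_n^{-i\tau}=e^{2\pi i n\,\mathrm{sgn}(\tau)}=1$ for every $n$. Passing to polar coordinates,
\[
\int_{\delta_n\le |x|\le 1}|x|^{-d-i\tau}\,dx \;=\; |S^{d-1}|\int_{\delta_n}^{1}\rho^{-1-i\tau}\,d\rho \;=\; |S^{d-1}|\,\frac{1-\delta_n^{-i\tau}}{-i\tau}\;=\;0,
\]
so the singular part of $G_{i\tau}$ contributes exactly zero on each annulus, and
\[
\int_{\delta_n\le|x|\le 1}G_{i\tau}(x)\,dx \;=\;\int_{\delta_n\le|x|\le 1}R_\tau(x)\,dx \;\longrightarrow\;\int_{|x|\le 1}R_\tau(x)\,dx
\]
as $n\to\infty$ by the dominated convergence theorem, which proves the existence of the limit. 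The main technical obstacle is the remainder estimate in low dimensions --- especially the logarithmic correction at $d=2$ and the weaker decay at $d=1$ --- but it requires no ideas beyond the elementary bound on $e^{-r^2/u}-1$ together with a case split on whether $u\lessgtr r^2$.
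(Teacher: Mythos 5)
Your proof is correct and follows essentially the same strategy as the paper: you isolate the leading singularity as a constant multiple of $|x|^{-d-i\tau}$, use the engineered choice of $\delta_n$ so that $\delta_n^{-i\tau}=1$ to make that part vanish on each annulus, and handle the remainder by dominated convergence. Your decomposition (write $F_\tau(\pi|x|)=\Gamma(i\tau/2+d/2)+[F_\tau(\pi|x|)-\Gamma(i\tau/2+d/2)]$ after the substitution $s=\pi^2|x|^2/u$) is literally the paper's decomposition $e^{-s}=1+g(s)$ in different variables, and the case split on $d$ for the remainder bound is the same calculation.
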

\begin{proof}
Define \(g(s)=e^{-s}-1\) and note that \(|g(s)|\leq \min\{s,1\}, s\geq 0\). We write 
\[
G_{i\tau}(x)=\frac{\pi^{d/2}}{\Gamma(-i\tau/2)}\int_0^\infty s^{-i\frac{\tau}{2}-1-\frac{d}{2}}e^{-\pi^2\frac{|x|^2}{s}}ds+\frac{\pi^{d/2}}{\Gamma(-i\tau/2)}\int_0^\infty s^{-i\frac{\tau}{2}-1-\frac{d}{2}}g(s)e^{-\pi^2\frac{|x|^2}{s}}ds.
\]
Using the change of variables \(t=|x|^2/s\) in the first integral we get
\[
\frac{\pi^{d/2}}{\Gamma(-i\tau/2)}|x|^{-d-i\tau}\int_0^\infty t^{i\frac{\tau}{2}-1+\frac{d}{2}}e^{-\pi^2t}dt.
\]
Now note that
\[
\int_{\delta_n\leq |x|\leq 1}|x|^{-d-i\tau}dx=d|B_1|\int_{\delta_n}^1\rho^{-d-i\tau} \rho^{d-1}d\rho = \frac{d}{i\tau}|B_1|\left(\delta_n^{-i\tau}-1\right)=0,
\]
since \(\delta_n^{-i\tau}=e^{2\pi i n\frac{\tau}{|\tau|}}=1\). This means that 
\[
\int_{\delta_n\leq |x|\leq 1}G_{i\tau}(x)dx=\frac{\pi^{d/2}}{\Gamma(-i\tau/2)}\int_{\delta_n\leq |x|\leq 1}\int_0^\infty s^{-i\frac{\tau}{2}-1-\frac{d}{2}}g(s)e^{-\pi^2\frac{|x|^2}{s}}dsdx. 
\]
The result now follows from the dominated convergence theorem if only we can prove that the integrand can be dominated by an integrable function in \(\{|x|\leq 1\}\). To this end we estimate the integrand as follows:
\[
\begin{split}
  \left|\int_0^\infty s^{-i\frac{\tau}{2}-1-\frac{d}{2}}g(s)e^{-\pi^2\frac{|x|^2}{s}}ds\right|&\leq \int_0^1s^{-1-\frac{d}{2}}|g(s)|e^{-\pi^2\frac{|x|^2}{s}}ds+\int_1^\infty s^{-1-\frac{d}{2}}|g(s)|e^{-\pi^2\frac{|x|^2}{s}}ds\\
  &\leq \int_0^1s^{-\frac{d}{2}}e^{-\pi^2\frac{|x|^2}{s}}ds+\int_1^\infty s^{-1-\frac{d}{2}}e^{-\pi^2\frac{|x|^2}{s}}ds\\
  &=\frac{1}{|x|^{d-2}}\int_{|x|^2}^\infty t^{\frac{d}{2}-2}e^{-\pi^2t}dt+\frac{1}{|x|^d}\int_0^{|x|^2}t^{\frac{d}{2}-1}e^{-\pi^2t}dt\\
  &\leq \frac{1}{|x|^{d-2}}\int_{|x|^2}^1t^{\frac{d}{2}-2}dt+\frac{1}{|x|^{d-2}}\int_1^\infty t^{\frac{d}{2}-2}e^{-\pi^2t}dt\\
  &+\frac{1}{|x|^d}\int_0^{|x|^2}t^{\frac{d}{2}-1}dt.
\end{split}
\]
Here we must separate the cases \(d=2\) and \(d\neq 2\). Suppose first that \(d\neq 2\). Then the last expression is equal to
\[
\frac{2}{d-2}\frac{1}{|x|^{d-2}}-\frac{2}{d-2}+\frac{1}{|x|^{d-2}}\int_1^\infty t^{\frac{d}{2}-2}e^{-\pi^2t}dt+\frac{2}{d},
\]
which is integrable in \(\{|x|\leq 1\}\). If instead \(d=2\), then we get
\[
\int_1^\infty t^{-1}e^{-\pi^2t}dt+1-2\log|x|,
\]
which is also integrable. Therefore the dominated convergence theorem is applicable and 
\[
\lim_n\int_{\delta_n\leq |x|\leq 1}G_{i\tau}(x)dx=\frac{\pi^{d/2}}{\Gamma(-i\tau/2)}\int_{|x|\leq 1}\int_0^\infty s^{-i\frac{\tau}{2}-1-\frac{d}{2}}g(s)e^{-\pi^2\frac{|x|^2}{s}}dsdx.
\]
\end{proof}
Now, given \(\tau\neq 0\), define
\[
\langle \tilde{V}_\tau,\phi\rangle = \lim_n\int_{|x|\geq \delta_n}G_{i\tau}(x)\phi(x)dx,\ \forall \phi\in \mcal{S}(\R^d).
\]
We can write
\[
\begin{split}
\int_{|x|\geq \delta_n}G_{i\tau}(x)\phi(x)dx&=\int_{\delta_n\leq |x|\leq 1}G_{i\tau}(x)(\phi(x)-\phi(0))dx+\phi(0)\int_{\delta_n\leq |x|\leq 1}G_{i\tau}(x)dx\\
&+\int_{|x|\geq 1}G_{i\tau}(x)\phi(x)dx.
\end{split}
\]
By proposition \ref{prop4.3} and lemma \ref{lemma4.2}, it follows that the limit as \(n\rightarrow +\infty\) exits. Moreover, 
\[
\begin{split}
|\langle \tilde{V}_\tau,\phi\rangle|&\leq \int_{|x|\leq 1}|G_{i\tau}(x)||\phi(x)-\phi(0)|dx\\
&+|\phi(0)|\frac{\pi^{d/2}}{|\Gamma(-i\tau/2)|}\int_{|x|\leq 1}\int_0^\infty s^{-1-\frac{d}{2}}|g(s)|e^{-\pi^2\frac{|x|^2}{s}}dsdx+\int_{|x|\geq 1}|G_{i\tau}(x)||\phi(x)|dx\\
&\lesssim_{d,\tau} \|\nabla \phi\|_{L^\infty}\int_{|x|\leq 1}|x|^{1-d}dx+\|\phi\|_{L^\infty}+\|\phi\|_{L^\infty}\int_{|x|\geq 1}e^{-\pi|x|}dx.
\end{split}
\]
This shows that \(\tilde{V}_\tau\) is a well-defined tempered distribution. It is also clear that \(\tilde{V}_\tau=G_{i\tau}\) in \(\R^d\setminus \{0\}\). Next we check that \(G_{i\tau}\) is a standard kernel.
\begin{lemma}\label{lemma4.3}
The following estimates hold:
\begin{align*}
    |G_{i\tau}(x)|&\lesssim_d \sqrt{|\tau|}e^{\frac{\pi}{4}|\tau|}|x|^{-d}\\
    |G_{i\tau}(x-y)-G_{i\tau}(x)|&\lesssim_d \sqrt{|\tau|}e^{\frac{\pi}{4}|\tau|}\frac{|y|}{|x|^{d+1}},\text{ when }|x|\geq 2|y|.
\end{align*}
\end{lemma}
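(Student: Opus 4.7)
The plan is to upgrade Proposition \ref{prop4.3} in two ways: first, to make explicit how the constant $1/|\Gamma(-i\tau/2)|$ grows in $\tau$; second, to obtain a matching bound on $\nabla G_{i\tau}$ so that the Lipschitz estimate follows from the mean value theorem.

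The first step is to recall that $|\Gamma(iy)|^2=\pi/(y\sinh(\pi y))$, which applied with $y=-\tau/2$ gives
\[
\frac{1}{|\Gamma(-i\tau/2)|^2}=\frac{|\tau|}{2\pi}\sinh\!\Bigl(\frac{\pi|\tau|}{2}\Bigr)\lesssim |\tau|\,e^{\pi|\tau|/2},
\]
hence $1/|\Gamma(-i\tau/2)|\lesssim \sqrt{|\tau|}\,e^{\pi|\tau|/4}$. Combining this with Proposition \ref{prop4.3} already handles the region $|x|\leq 1/\pi$. In the region $|x|\geq 1/\pi$ the factor $e^{-\pi|x|}$ is majorized by $C_d|x|^{-d}$ (since $e^{\pi|x|}\geq (\pi|x|)^d/d!$), so the first inequality $|G_{i\tau}(x)|\lesssim_d \sqrt{|\tau|}\,e^{\pi|\tau|/4}|x|^{-d}$ follows in both regimes.

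For the second inequality I would differentiate under the integral sign, which is justified by the rapid decay in $s$, to obtain
\[
\nabla G_{i\tau}(x)=-\frac{2\pi^{d/2+2}}{\Gamma(-i\tau/2)}\,x\int_0^\infty s^{-i\tau/2-2-d/2}e^{-s-\pi^2|x|^2/s}\,ds.
\]
Then I would repeat exactly the two-regime analysis of Proposition \ref{prop4.3}, now with the exponent $-2-d/2$ in place of $-1-d/2$. For $|x|\leq 1/\pi$ I would split the integral over $(0,|x|^2)$, $(|x|^2,1)$, $(1,\infty)$ and estimate each piece (substituting $t=|x|^2/s$ in the first piece and using $e^{-s-\pi^2|x|^2/s}\leq 1$ in the second), yielding a bound of order $|x|^{-d-2}$; for $|x|\geq 1/\pi$ I would use the inequality $s+\pi^2|x|^2/s\geq \tfrac12(s+1/s)+\pi|x|$ to extract an $e^{-\pi|x|}$ factor and absorb it into $|x|^{-d-2}$. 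Multiplying by $|x|$ and the Gamma factor gives
\[
|\nabla G_{i\tau}(x)|\lesssim_d \sqrt{|\tau|}\,e^{\pi|\tau|/4}\,|x|^{-d-1}.
\]

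The Lipschitz estimate then follows from the mean value inequality: for $|x|\geq 2|y|$ the line segment from $x$ to $x-y$ stays inside $\{|z|\geq |x|/2\}$, so
\[
|G_{i\tau}(x-y)-G_{i\tau}(x)|\leq |y|\sup_{t\in[0,1]}|\nabla G_{i\tau}(x-ty)|\lesssim_d \sqrt{|\tau|}\,e^{\pi|\tau|/4}\,\frac{|y|}{|x|^{d+1}}.
\]
The main technical obstacle is the gradient bound: one must carry out the same careful splitting of the $s$-integral as in Proposition \ref{prop4.3}, but with the more singular weight $s^{-2-d/2}$, and track the constants so that the $\tau$-dependence remains confined to the factor $1/|\Gamma(-i\tau/2)|$. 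Once that computation is in hand, no new ideas are needed.
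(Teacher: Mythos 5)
Your proof is correct and follows essentially the same route as the paper's: the same identity $|\Gamma(-i\tau/2)|^{2}=2\pi/(\tau\sinh(\pi\tau/2))$ controls the $\tau$-dependence, and the Lipschitz bound rests on the same two-regime estimate of the $s$-integral together with the same geometric observation that the segment from $x$ to $x-y$ stays in $\{|z|\geq|x|/2\}$ when $|x|\geq 2|y|$. The only cosmetic difference is that the paper applies the fundamental theorem of calculus to the inner Gaussian factor $\Phi_s(x)=e^{-\pi^{2}|x|^{2}/s}$ for each fixed $s$ and then integrates in $s$, whereas you compute $\nabla G_{i\tau}$ explicitly (after justifying differentiation under the integral sign) and invoke the mean value inequality -- the two are the same computation with the order of differentiation and $s$-integration interchanged.
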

\begin{proof}
By proposition \ref{prop4.3} we already know that
\[
|G_{i\tau}(x)|\lesssim_d \frac{1}{|\Gamma(-i\tau/2)|}|x|^{-d}.
\]
The first estimate follows from the fact that 
\[
|\Gamma(-i\tau/2)|^2=\frac{2\pi}{\tau \sinh(\pi \tau/2)}.
\]
Now we turn our attention to the second estimate. We have that
\[
G_{i\tau}(x-y)-G_{i\tau}(x)=\frac{\pi^{d/2}}{\Gamma(-i\tau/2)}\int_0^\infty s^{-i\frac{\tau}{2}-1-\frac{d}{2}}e^{-s}(e^{-\frac{\pi^2}{s}|x-y|^2}-e^{-\frac{\pi^2}{s}|x|^2})ds.
\]
Define \(\Phi_s(x)=e^{-\pi^2|x|^2/s}\). By the fundamental theorem of Calculus,
\[
\begin{split}
\Phi_s(x-y)-\Phi_s(x)&=\int_0^1\frac{d}{dt}\left\{\Phi_s(x-ty)\right\}dt=-\int_0^1\nabla \Phi_s(s-ty)\cdot ydt\\
&=\frac{2\pi^2}{s}\int_0^1e^{-\frac{\pi^2}{s}|x-ty|^2}(x-ty)\cdot ydt.
\end{split}
\]
Note that, since \(|x|\geq 2|y|\) it follows that \(|x-ty|\geq |x|/2\). Therefore,
\[
|\Phi_s(x-y)-\Phi_s(x)|\leq \frac{2\pi^2}{s}\int_0^1e^{-\frac{\pi^2}{4s}|x|^2}|x-ty||y|dt\leq \frac{3\pi^2}{s}|x||y|e^{-\frac{\pi^2}{4s}|x|^2}.
\]
Using this estimate we obtain
\[
|G_{i\tau}(x-y)-G_{i\tau}(x)|\leq \frac{3\pi^{d/2+2}}{|\Gamma(-i\tau/2)|}|x||y|\int_0^\infty s^{-2-\frac{d}{2}}e^{-s-\frac{\pi^2}{4s}|x|^2}ds.
\]
If we now argue as in the proof of proposition \ref{prop4.3} we see that
\[
\int_0^\infty s^{-2-\frac{d}{2}}e^{-s-\frac{\pi^2}{4s}|x|^2}ds\lesssim_d \frac{1}{|x|^{d+2}}.
\]
Therefore the result follows.
\end{proof}
Now take \(\phi\in \mcal{S}(\R^d)\) and \(\rho\in \mcal{S}(\R^d)\) such that \(\rho(0)=1\). Then, by lemma \ref{lemma4.1},
\[
\begin{split}
  \langle \tilde{W}_\tau, \phi\rangle &= \langle \tilde{W}_\tau, \phi-\phi(0)\rho\rangle+\langle \tilde{W}_\tau, \rho\rangle \phi(0)=\int_{\R^d}G_{i\tau}(x)[\phi(x)-\phi(0)\rho(x)]dx+\langle \tilde{W}_\tau, \rho\rangle \phi(0)  \\
  &=\lim_n\int_{|x|\geq \delta_n}G_{i\tau}(x)\phi(x)dx-\phi(0)\lim_n\int_{|x|\geq \delta_n}G_{i\tau}(x)\rho(x)dx+\langle \tilde{W}_\tau,\rho\rangle \phi(0)\\
  &=\langle \tilde{V}_\tau,\phi\rangle+\phi(0)\langle \tilde{W}_\tau-\tilde{V}_\tau,\rho\rangle.
\end{split}
\]
Thus,
\[
\tilde{W}_\tau=\tilde{V}_\tau+ \tilde{\beta}_d(\tau)\delta,
\]
where
\[
\tilde{\beta}_d(\tau)=\langle \tilde{W}_\tau-\tilde{V}_\tau, \rho\rangle. 
\]
Convolving with a function \(f\in \mcal{S}(\R^d)\) we arrive at our desired identity:
\begin{equation}\label{eq4.3}
    J^{i\tau}f=\tilde{V}_\tau * f+\tilde{\beta}_d(\tau)f,\ \forall \tau \neq 0.
\end{equation}
Put \(\tilde{T}_\tau f= \tilde{V}_\tau * f\). From identity \eqref{eq4.3} it follows immediately that 
\[
\|\tilde{T}_\tau f\|_{L^2}\leq \|J^{i\tau}f\|_{L^2}+|\tilde{\beta}_d(\tau)|\|f\|_{L^2}\leq(1+|\langle \langle \cdot \rangle^{i\tau}, \check{\rho}\rangle | +|\langle \tilde{V}_\tau, \rho\rangle |)\|f\|_{L^2} \lesssim_d(1+\sqrt{|\tau|}e^{\frac{\pi}{4}|\tau|})\|f\|_{L^2}.
\]
So, the operator \(\tilde{T}_\tau\) has a bounded extension to \(L^2\). This means that \(\tilde{T}_\tau\) is bounded on \(L^2\) and is associated to a standard kernel. Thus, \(\tilde{T}_\tau\) is a Calderón-Zygmund operator. This allows us to conclude that \(\tilde{T}_\tau\) has a unique extension to an operator bounded from \(L^1\) to \(L^{1,\infty}\), with norm
\[
\|\tilde{T}_\tau\|_{L^1\rightarrow L^{1,\infty}}\lesssim_d 1+\sqrt{|\tau|}e^{\frac{\pi}{4}|\tau|}.
\]
This in turn implies that \(J^{i\tau}\) also has a bounded extension mapping \(L^1\) to \(L^{1,\infty}\) with
\[
\|J^{i\tau}\|_{L^1\rightarrow L^{1,\infty}}\lesssim_d 1+ \sqrt{|\tau|}e^{\frac{\pi}{4}|\tau|}.
\]
Note that this estimate works for all \(\tau\in \R\) since \(J^{i\tau}\) is just the identity when \(\tau=0\). To apply Lerner's domination theorem we have only to check that the associated maximal operator
\[
\tilde{M}_\tau f(x)=\sup_{Q\ni x}\sup_{x',x''\in Q}|J^{i\tau}(f\chi_{\R^d\setminus Q^*})(x')-J^{i\tau}(f\chi_{\R^d\setminus Q^*})(x'')|
\]
is weak type \((1,1)\). Again, this follows from identity \eqref{eq4.3} and the fact that \(\tilde{T}_\tau\) is a Calderón-Zygmund operator. Arguing as in lemma \ref{lemma2.6} we get
\[
\|\tilde{M}_\tau\|_{L^1\rightarrow L^{1,\infty}}\lesssim_d\sqrt{|\tau|}e^{\frac{\pi}{4}|\tau|}.
\]
We can thus apply theorem \ref{thm1.2}, theorem \ref{bound for sparse operators} and the argument in the proof of theorem \ref{thm2.3} to conclude the following result.
\begin{thm}\label{thm4.2}
Let \(1<p<\infty\) and consider weights \((v,w)\in A_{p,p}\) such that \(v^{-p'/p}, w\in A_\infty\). Then,
\[
\|J^{i\tau}f\|_{L^p(w)}\lesssim_{d,p,v,w}(1+\sqrt{|\tau|} e^{\frac{\pi}{4}|\tau|})\|f\|_{L^p(v)},\ \forall f\in \mcal{S}(\R^d),\forall \tau\in \R.  
\]
\end{thm}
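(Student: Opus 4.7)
The plan is to mirror the proof of Theorem \ref{thm2.3} essentially verbatim, substituting the Bessel multiplier $J^{i\tau}$ and its associated maximal operator $\tilde{M}_\tau$ for the Riesz multiplier $D^{i\tau}$ and $M_\tau$. All the analytic ingredients have already been assembled just above the theorem statement: the weak type $(1,1)$ bound $\|J^{i\tau}\|_{L^1 \to L^{1,\infty}} \lesssim_d 1 + \sqrt{|\tau|}e^{\frac{\pi}{4}|\tau|}$ coming from the Calder\'on-Zygmund theory applied to $\tilde{T}_\tau$, together with the parallel bound $\|\tilde{M}_\tau\|_{L^1 \to L^{1,\infty}} \lesssim_d \sqrt{|\tau|}e^{\frac{\pi}{4}|\tau|}$ for the associated maximal operator.

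The first step is to invoke the Lerner-Ombrosi sparse domination theorem (Theorem \ref{thm1.2}) with $q = r = 1$. The two weak type estimates above show that its hypotheses are satisfied, so for every compactly supported $f \in L^1(\R^d)$ there exists a $(2 \cdot 5^d \sqrt{d}^{\,d})^{-1}$-sparse family $\mcal{S} = \mcal{S}(f)$ with
\[
|J^{i\tau}f(x)| \lesssim_d \bigl(1 + \sqrt{|\tau|}e^{\frac{\pi}{4}|\tau|}\bigr)\, \mcal{A}_{\mcal{S}}f(x) \quad \text{for a.e. } x \in \R^d.
\]
Next, using the same decomposition from Remark 4.3 of \cite{Lerner} already used in the proof of Theorem \ref{thm2.3}, one replaces $\mcal{A}_{\mcal{S}}$ by a finite sum $\sum_{j=1}^{3^d} \mcal{A}_{\mcal{S}_j}$ of sparse operators coming from dyadic lattices. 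Then Theorem \ref{bound for sparse operators}, applied with $\alpha = 0$ and $p = q$, gives a two weight bound for each $\mcal{A}_{\mcal{S}_j}$ under precisely the hypotheses $(v,w) \in A_{p,p}$ and $v^{-p'/p}, w \in A_\infty$.

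Finally, to pass from compactly supported $L^1$ functions to arbitrary Schwartz functions $f$, one sets $f_N = f\chi_{B(0,N)}$, applies the estimate just proved, and then lets $N \to \infty$. Since $|\langle \xi\rangle^{i\tau}| = 1$, the operator $J^{i\tau}$ is an $L^2$-isometry, so $f_N \to f$ in $L^2$ forces $J^{i\tau}f_N \to J^{i\tau}f$ in $L^2$; extracting an a.e. convergent subsequence and applying Fatou's lemma then yields the desired weighted bound for $f$, completing the argument.

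There is essentially no new technical obstacle to overcome here: the genuinely delicate work was the construction of $\tilde{V}_\tau$ and the sharp tracking of the constants $|\Gamma(-i\tau/2)|^{-1} \sim \sqrt{|\tau|}e^{\frac{\pi}{4}|\tau|}$ in the kernel bounds of Lemma \ref{lemma4.3}, which propagate through the Calder\'on-Zygmund estimates into the two weak type $(1,1)$ bounds. The only mild subtlety worth flagging is that, unlike in Theorem \ref{thm2.3} where one could restrict to $D^{\geq 0}\mcal{S}(\R^d)$ because of the singular symbol $|\xi|^{i\tau}$ at the origin, here $J^{i\tau}$ is bounded on all of $\mcal{S}(\R^d)$ (the symbol is smooth), so the approximation step goes through directly for every Schwartz $f$ without any restriction on its Fourier behavior at the origin.
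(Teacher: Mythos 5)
Your proof is correct and is essentially the paper's own argument: the paper itself simply says that Theorem \ref{thm4.2} follows from Theorem \ref{thm1.2}, Theorem \ref{bound for sparse operators}, and "the argument in the proof of theorem \ref{thm2.3}", which is precisely the chain you lay out (Lerner--Ombrosi domination with $q=r=1$, reduction to dyadic sparse families via Remark 4.3 of \cite{Lerner}, the two-weight bound for $\mcal{A}_{\mcal{S}_j}$ with $\alpha=0$, then truncation to $f_N=f\chi_{B(0,N)}$ and a Fatou passage to the limit). Your closing remark correctly identifies why the Bessel case applies to all of $\mcal{S}(\R^d)$ rather than only $D^{\geq 0}\mcal{S}(\R^d)$.
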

Using theorems \ref{thm4.1} and \ref{thm4.2} we can prove all the main results of the previous sections for the operator \(J\) with the added benefit that no polynomials are necessary and no difficulty is presented by the case \(t=0\). Indeed, the analogous of theorem \ref{thm2.2} is the following Sobolev type inequality.
\begin{thm}\label{thm4.last}
Let \(1<p\leq q<\infty\) and \(t<s<t+d\). Suppose that \((v,w)\in A_{p,q}^{s-t}\) and \(v^{-p'/p},w\in A_\infty\). Then,
\[
\|J^tf\|_{L^q(w)}\lesssim \|J^sf\|_{L^p(v)},\ \forall f\in \mcal{S}(\R^d).
\]
\end{thm}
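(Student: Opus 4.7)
The plan is to reduce this Sobolev-type estimate directly to the weighted Hardy--Littlewood--Sobolev inequality for the Bessel potential, Theorem \ref{thm4.1}, which has already been established. Setting $\alpha := s-t$, the assumption $t<s<t+d$ gives exactly $0<\alpha<d$, which is the range in which Theorem \ref{thm4.1} applies, and the pair $(v,w)$ lies in $A_{p,q}^{s-t}=A_{p,q}^{\alpha}$ with $v^{-p'/p},w\in A_\infty$ by hypothesis.

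The key observation is the semigroup identity
\[
J^{-\alpha}J^s = J^{s-\alpha} = J^{t} \qquad \text{on } \mcal{S}(\R^d),
\]
which on the Fourier side is simply $\langle\xi\rangle^{-\alpha}\langle\xi\rangle^{s}=\langle\xi\rangle^{t}$. To use this I first need that $J^s$ preserves $\mcal{S}(\R^d)$: indeed, $\langle\xi\rangle^{s}$ is a smooth function all of whose derivatives are of polynomial growth, so multiplication by $\langle\xi\rangle^{s}$ maps $\mcal{S}(\R^d)$ to itself, and hence $J^s f\in\mcal{S}(\R^d)$ whenever $f\in\mcal{S}(\R^d)$.

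With this in hand, given $f\in \mcal{S}(\R^d)$, I apply Theorem \ref{thm4.1} to the Schwartz function $g := J^s f$, yielding
\[
\|J^{t}f\|_{L^q(w)} = \|J^{-\alpha}(J^s f)\|_{L^q(w)} \lesssim \|J^s f\|_{L^p(v)},
\]
which is the desired inequality.

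There is really no serious obstacle here. In particular, none of the subtleties that arose for the homogeneous operator $D$ in Theorem \ref{thm2.2} appear: no polynomial correction is needed, and the case $t=0$ (or even $t<0$) requires no special treatment. The underlying reason is twofold. First, $J^s$ maps $\mcal{S}(\R^d)$ into itself, whereas $D^s$ does not, so we can freely use the semigroup identity within $\mcal{S}(\R^d)$ without having to invoke auxiliary spaces like $D^{>\tau}\mcal{S}(\R^d)$. Second, the Bessel kernel $G_\alpha$ decays exponentially at infinity (Proposition \ref{prop4.1}), so no growth like $\langle x\rangle^\alpha$ ever arises, and consequently the possibility that $J^{t}f$ and $J^{-\alpha}(J^s f)$ differ by a polynomial—which was the source of the extra term in the Riesz case—is simply absent.
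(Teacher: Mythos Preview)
Your proof is correct and is exactly the argument the paper has in mind: reduce to Theorem~\ref{thm4.1} via the semigroup identity $J^t = J^{-(s-t)}J^s$, using that $J^s$ preserves $\mcal{S}(\R^d)$. Your discussion of why no polynomial correction or special treatment of $t\le 0$ is needed matches the paper's own remarks preceding the statement.
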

The statements of theorems \ref{thm2.4}, \ref{thm3.1} and \ref{thm3.2} are exactly the same for \(J\) as they are for \(D\).

\section{Application to a mixed inequality}\label{mixed}

Consider a measurable function \(f:\R^d\times \R^m\rightarrow \C\). The mixed \(L^p_xL^q_y\) norm of \(f\) is simply
\[
\|f\|_{L^p_xL^q_y}=\left( \int (\int |f(x,y)|^q dy)^{\frac pq} dx\right) ^{\frac 1p},
\]
when $1\leq p,q < \infty$, with the obvious adaptation for $L^\infty$. In general, if \(1\leq p\leq q\leq\infty\), the inequality 
$
\|f\|_{L^q_yL^p_x}\leq \|f\|_{L^p_xL^q_y}
$
holds, and it can be proved directly by using Minkowski's integral inequality. 

Here we are interested in the reverse inequality, which is not as straightforward. Typically, this
is achieved by a combination of the use of weights and Hölder's inequality, when one wants to raise the Lebesgue exponents, with Sobolev's inequality to lower them, by paying the price of having to introduce derivatives. For example, in this situation of mixed $L^p$ norms, if one wants to bound the $L^p_xL^q_y$  norm of $f$ by $L^q_yL^p_x$, one could start by using the Sobolev inequality in the $y$ variable to lower the $q$ exponent to an $L^p_y$ norm, at which point the order of the $x$ and $y$ norms, both in $L^p$, can be switched, and then finally the exponent of the $y$ norm raised 
again to $L^q_y$ through the introduction of a convenient weight with Hölder's inequality. We would thus end up controlling the mixed 
$L^p_xL^q_y$ norm of \(f\) by the mixed $L^q_yL^p_x$ norm of \(D^s_yf\) with a weight also in $y$. However, for certain applications, we might be interested in controlling \(f\) by derivatives and weights specifically in the $x$ variable. In order to use the same type of argument, 
one has to reverse the sequence of steps, by first using a weight with Hölder in the $x$ variable, to be able to raise the $L^p_x$ norm to $L^q_x$, then switch the $x$ and $y$ norms,  both in $L^q$, and only at the last step
apply the Sobolev inequality in $x$,  to bring the $L^q_x$ norm back down to $L^p_x$.
However,  a weighted
Sobolev norm in the $x$ variable is now required, because the weight in $x$ is already there from the first step of the procedure. Corollary \ref{cor2.1} is exactly the type of weighted Sobolev inequality that allows us to do it.

 The following theorem, therefore, yields the result that we have just described.
\begin{thm}\label{thm5.1}
Let \(1<p\leq q<\infty\), \(0<s<d\), and suppose we have two weights \(v,w\) such that \((v,w)\in A_{p,q}^s\), \(v^{-p'/p}, w\in A_\infty\) and
\[
\int_{\R^d}w(x)^{-\frac{p}{q-p}}dx<\infty;\ w(\Gamma^S_R)=+\infty, \forall R>0\text{ and }S\subseteq S^{d-1}, |S|>0; L^p(v)\subseteq L^p.
\]
If \(f:\R^d\times \R^m\rightarrow \C\) is a measurable function such that \(f(\cdot,y)\in D^{\geq0}\mcal{S}(\R^d)\) for a.e. \(y\in \R^m\), then 
\[
\|f(x,y)\|_{L^p_xL^q_y}\lesssim\|v(x)^{1/p}D_x^sf(x,y)\|_{L^q_yL^p_x}.
\]
\end{thm}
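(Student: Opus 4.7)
The strategy follows exactly the sequence of steps sketched in the paragraph preceding the theorem: introduce a weight in $x$ via Hölder to raise the outer exponent from $p$ to $q$; swap the order of integration now that both exponents match; and close with the weighted Sobolev inequality in $x$ to return to $L^p_x$.

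First, I would set $g(x) := \|f(x,\cdot)\|_{L^q_y}$, so that $\|f\|_{L^p_xL^q_y} = \|g\|_{L^p_x}$. Applying Hölder's inequality in $x$ with conjugate exponents $q/p$ and $q/(q-p)$ to the decomposition $|g|^p = (|g|^p w^{p/q}) \cdot w^{-p/q}$ gives
\[
\|g\|_{L^p_x}^p \;\leq\; \|g\|_{L^q_x(w)}^p \left(\int_{\R^d} w(x)^{-\frac{p}{q-p}}\,dx\right)^{\!(q-p)/q},
\]
and the second factor on the right is finite by hypothesis. Unfolding $g$ via Fubini (legitimate because both the outer and inner exponents are now $q$) then yields
\[
\|g\|_{L^q_x(w)}^q \;=\; \int_{\R^d}\!\int_{\R^m} |f(x,y)|^q\, dy \, w(x)\,dx \;=\; \|f\|_{L^q_y L^q_x(w)}^q.
\]

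Second, for almost every fixed $y \in \R^m$, the slice $f(\cdot,y)$ lies in $D^{\geq 0}\mcal{S}(\R^d)$. The hypotheses $(v,w)\in A_{p,q}^s$, $v^{-p'/p}, w\in A_\infty$, and $w(\Gamma^S_R) = +\infty$ are precisely those of Corollary \ref{cor2.1} applied with $t=0$, while the additional assumption $L^p(v)\subseteq L^p$ supplies the extension of the Sobolev inequality from $D^{>0}\mcal{S}(\R^d)$ down to the borderline exponent $t=0$ as explained in Remark \ref{remark1}. Consequently, for a.e.\ $y$,
\[
\|f(\cdot,y)\|_{L^q_x(w)} \;\lesssim\; \|D_x^s f(\cdot,y)\|_{L^p_x(v)},
\]
with an implicit constant independent of $y$. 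Raising to the $q$-th power and integrating in $y$ gives
\[
\|f\|_{L^q_y L^q_x(w)} \;\lesssim\; \|D_x^s f\|_{L^q_y L^p_x(v)} \;=\; \|v(x)^{1/p} D_x^s f(x,y)\|_{L^q_y L^p_x}.
\]
Chaining the two displayed inequalities yields the theorem.

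The only nontrivial point I anticipate is justifying the $t=0$ case of the weighted Sobolev inequality uniformly across slices; this is handled by Remark \ref{remark1}, whose argument requires only that $D_x^s f(\cdot,y) \in L^p(v)$, which is implicitly the nontrivial hypothesis (otherwise the right-hand side is infinite and the inequality is vacuous). The measurability needed to apply Fubini in the first step follows from the measurability of $(x,y)\mapsto f(x,y)$ together with the measurability of $y \mapsto \|f(\cdot,y)\|_{L^q_x(w)}$, which is standard.
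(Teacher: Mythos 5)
Your proof is correct and follows essentially the same route as the paper: Hölder's inequality in $x$ with the weight $w$ to raise the outer exponent from $p$ to $q$, Fubini to swap the order of the two $L^q$ norms, and then the weighted Sobolev inequality (Corollary \ref{cor2.1} via Remark \ref{remark1} for the borderline case $t=0$) applied slice-wise in $y$. Your version merely makes the Hölder exponents and the role of the hypothesis $L^p(v)\subseteq L^p$ slightly more explicit.
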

\begin{proof}
By corollary \ref{cor2.1} and remark \ref{remark1} we have that 
\[
\|f(\cdot, y)\|_{L^q(w)}\lesssim\|D^s_xf(\cdot, y)\|_{L^p(v)}\text{ for a.e. }y\in \R^m.
\]
Using this and Hölder's inequality we see that
\[
\begin{split}
  \|f\|_{L^p_xL^q_y}&\leq \|\|f(\cdot, \cdot)\|_{L^q_y}\|_{L^q_x(w)}\left(\int w(x)^{-\frac{p}{q-p}}dx\right)^{\frac{1}{p}-\frac{1}{q}} \lesssim_w\|\|f(\cdot,\cdot)\|_{L^q_x(w)}\|_{L^q_y}\\
  &\lesssim\|\|D^s_xf(\cdot,\cdot)\|_{L^p_x(v)}\|_{L^q_y}\lesssim\|v(x)^{1/p}D^s_xf(x,y)\|_{L^q_yL^p_x}.
\end{split}
\]
\end{proof}
\begin{remark}
Observe that, even though only one weight, $v$, appears in the main inequality, a suitable pairing $(v,w)$ is required for the conditions 
of the theorem to hold. 
\end{remark}
If we choose non-homogeneous weights we get the following result.
\begin{cor}
Let \(1<q<\infty\),  $2q/(q+1)<p<q,\, s=d(1/p-1/q)$ and let \(\gamma\in]s, d/p'[\). If \(f\) is a measurable function such that \(f(\cdot,y)\in D^{\geq0}\mcal{S}(\R^d)\) for a.e. \(y\in \R^m\), then
\[
\|f(x,y)\|_{L^p_xL^q_y}\lesssim\|\langle x\rangle^\gamma D^s_xf(x,y)\|_{L^q_yL^p_x}.
\]
\end{cor}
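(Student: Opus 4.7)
The natural plan is to apply Theorem \ref{thm5.1} directly with the choice $v(x)=\langle x\rangle^{\gamma p}$ (so that $v(x)^{1/p}=\langle x\rangle^{\gamma}$ matches the right-hand side) and a companion weight of the form $w(x)=\langle x\rangle^{b}$ for an exponent $b$ still to be selected. The corollary will then follow immediately once all hypotheses of Theorem \ref{thm5.1} are verified, so the whole task reduces to finding an admissible $b$ and checking the five conditions listed there.

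For the choice of $b$, I would collect the constraints one by one. First, by Lemma \ref{lemma3.2}, the pair $(\langle x\rangle^{\gamma p},\langle x\rangle^{b})$ lies in $A_{p,q}^{s}$ provided that $\gamma p<d(p-1)$ (equivalently $\gamma<d/p'$, which is part of the assumption), $b>-d$, $s\geq d/p-d/q$ (here we have equality), $s\leq d$, and the two balance inequalities $b\leq q\gamma$ and $b\leq qd/p'-qs=qd/p'-q(d/p-d/q)$, the latter being automatic under $\gamma<d/p'$. Second, the integrability condition $\int w^{-p/(q-p)}\,dx<\infty$ translates to $b>d(q-p)/p$. The compatibility of these two bounds requires $d(q-p)/p<q\gamma$, i.e.\ $\gamma>d(q-p)/(pq)=s$, which is the lower bound assumed on $\gamma$. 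The remaining two-sided inequality $d(q-p)/p<b\leq q\gamma$ has a non-empty solution set precisely because of the constraint $2q/(q+1)<p$: indeed, $s<d/p'$ is equivalent to $p>2q/(q+1)$, and this is exactly what guarantees the hypothesis $s<\gamma<d/p'$ can be met. Any fixed $b$ in this interval works; for concreteness one may take $b=q\gamma$.

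Third, the remaining hypotheses are immediate: $w=\langle x\rangle^{b}$ is a radial $A_\infty$ weight (since $b>-d$), hence by the remark following Lemma \ref{lemma2.4} it satisfies $w(\Gamma_R^S)=+\infty$ for every positive-measure $S\subseteq S^{d-1}$ and every $R>0$; $v^{-p'/p}=\langle x\rangle^{-\gamma p'}$ belongs to $A_\infty$ because $-\gamma p'>-d$; and the embedding $L^p(v)\subseteq L^p$ holds because $\gamma>0$ forces $v(x)\geq 1$ pointwise.

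With all hypotheses of Theorem \ref{thm5.1} verified, the conclusion
\[
\|f(x,y)\|_{L^p_xL^q_y}\lesssim \|\langle x\rangle^{\gamma}D^s_xf(x,y)\|_{L^q_yL^p_x}
\]
follows at once. I do not anticipate a serious obstacle: the heart of the argument is the arithmetic showing that the admissible window for $b$ is non-empty, and this is exactly what the two-sided condition $s<\gamma<d/p'$ together with $p>2q/(q+1)$ is designed to ensure. The only minor subtlety is that Theorem \ref{thm5.1} assumes $f(\cdot,y)\in D^{\geq 0}\mathcal{S}(\mathbb{R}^d)$ for a.e.\ $y$, which is already part of the hypothesis of the corollary, so no further approximation argument is needed.
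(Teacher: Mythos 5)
Your proof matches the paper's approach exactly: the paper's proof is simply to apply Theorem \ref{thm5.1} with $v(x)=\langle x\rangle^{\gamma p}$ and $w(x)=\langle x\rangle^{\gamma q}$, which is the choice $b=q\gamma$ you land on after tracking the constraints. One small algebraic slip: the condition $b/q\leq d/q'-\alpha$ from Lemma \ref{lemma3.2} with $\alpha=s$ gives $b\leq q(d/q'-s)$, and since here $d/q'-s=d-d/q-(d/p-d/q)=d/p'$, the bound is $b\leq qd/p'$, not $b\leq qd/p'-qs$ as you wrote; with the incorrect expression, $b=q\gamma$ would require $\gamma\leq d/p'-s$, which is \emph{not} guaranteed by $\gamma<d/p'$ alone, but with the correct bound $b\leq qd/p'$ the claim that $b=q\gamma$ is automatically admissible under $\gamma<d/p'$ does hold, so the conclusion is unaffected.
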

\begin{proof}
Apply the previous theorem with \(v(x)=\langle x\rangle^{\gamma p} \) and \(w(x)=\langle x\rangle^{\gamma q}\).
\end{proof}
\begin{remark}
The condition \(p>2q/(q+1)\) ensures that \(s<d/p'\).
\end{remark}
Of course these arguments can be adapted to non-homogeneous derivatives.
\begin{thm}
Let \(1<p\leq q<\infty\), \(0<s<d\), and suppose we have two weights \(v,w\) such that \((v,w)\in A_{p,q}^s, v^{-p'/p},w\in A_\infty\) and 
\[
\int_{\R^d}w(x)^{-\frac{p}{q-p}}dx<\infty. 
\]
If \(f:\R^d\times \R^m\rightarrow \C\) is a measurable function such that \(f(\cdot,y)\in \mcal{S}(\R^d)\) for a.e. \(y\in \R^m\), then
\[
\|f(x,y)\|_{L^p_xL^q_y}\lesssim \|v(x)^{1/p}J^s_xf(x,y)\|_{L^q_yL^p_x}.
\]
\end{thm}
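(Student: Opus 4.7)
The plan is to mirror the proof of Theorem \ref{thm5.1}, with Corollary \ref{cor2.1} replaced by Theorem \ref{thm4.last}. Note first that the finiteness $\int w^{-p/(q-p)}\,dx<\infty$ implicitly forces $p<q$ (the case $p=q$ makes the exponent ill-defined), so we may work under $1<p<q<\infty$.

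First I would freeze $y$: for a.e.\ $y\in \R^m$ we have $f(\cdot,y)\in\mcal{S}(\R^d)$, and since $(v,w)\in A^s_{p,q}$ with $v^{-p'/p},w\in A_\infty$, Theorem \ref{thm4.last} applied in the $x$ variable gives
\[
\|f(\cdot,y)\|_{L^q(w)}\lesssim \|J^s_xf(\cdot,y)\|_{L^p(v)},
\]
with implicit constant independent of $y$. The crucial advantage of working with $J^s$ rather than $D^s$ here is that Theorem \ref{thm4.last} delivers the inequality directly with no polynomial remainder, so the extra hypotheses $w(\Gamma^S_R)=+\infty$ and $L^p(v)\subseteq L^p$ from Theorem \ref{thm5.1} are not needed.

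Next I would use H\"older's inequality in the $x$ variable on the outer $L^p$ norm. Writing $\|f(x,\cdot)\|_{L^q_y}=w(x)^{-1/q}\cdot \bigl[w(x)^{1/q}\|f(x,\cdot)\|_{L^q_y}\bigr]$ and applying H\"older with conjugate exponents $pq/(q-p)$ and $q/p$ (using $\tfrac{1}{p}=\tfrac{q-p}{pq}+\tfrac{1}{q}$) yields
\[
\|f\|_{L^p_xL^q_y}\leq \|w^{-1/q}\|_{L^{pq/(q-p)}_x}\cdot \bigl\|w^{1/q}\|f(x,\cdot)\|_{L^q_y}\bigr\|_{L^q_x}.
\]
The first factor is finite by hypothesis. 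For the second factor, both exponents are equal to $q$, so Fubini identifies it with $\bigl\|\|f(\cdot,y)\|_{L^q_x(w)}\bigr\|_{L^q_y}$.

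Finally, I would insert the first step inside the outer $L^q_y$ norm to obtain
\[
\bigl\|\|f(\cdot,y)\|_{L^q_x(w)}\bigr\|_{L^q_y}\lesssim \bigl\|\|J^s_xf(\cdot,y)\|_{L^p_x(v)}\bigr\|_{L^q_y}=\|v(x)^{1/p}J^s_xf(x,y)\|_{L^q_yL^p_x},
\]
which is the desired estimate. There is no real obstacle; the argument is a clean adaptation of Theorem \ref{thm5.1}, and the only point worth flagging is that the switch from $D^s$ to $J^s$ streamlines the hypotheses by eliminating the need to control a polynomial remainder.
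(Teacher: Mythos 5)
Your proof is correct and is essentially identical to the paper's: the paper simply states "the proof is the same as the proof of Theorem \ref{thm5.1} except we use Theorem \ref{thm4.last}," and you have spelled out exactly that argument (freeze $y$, apply the weighted Sobolev inequality for $J^s$, raise $L^p_x$ to $L^q_x$ via H\"older with the $w^{-1/q}$ factor, swap the $x$ and $y$ norms at the common exponent $q$ by Fubini, then apply the Sobolev bound). Your side remark that the hypothesis $\int w^{-p/(q-p)}<\infty$ implicitly restricts to $p<q$, and that replacing $D^s$ by $J^s$ removes the polynomial remainder and the extra hypotheses of Theorem \ref{thm5.1}, is also consistent with the paper.
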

\begin{proof}
The proof is the same as the proof of theorem \ref{thm5.1} except we use theorem \ref{thm4.last}.
\end{proof}
Applying this to non-homogeneous weights we get the following result.
\begin{cor}
Let \(1<q<\infty\),  $2q/(q+1)<p<q,\, s=d(1/p-1/q)$ and let \(\gamma\in]s, d/p'[\). If \(f\) is a measurable function such that \(f(\cdot,y)\in \mcal{S}(\R^d)\) for a.e. \(y\in \R^m\), then
\[
\|f(x,y)\|_{L^p_xL^q_y}\lesssim\|\langle x\rangle^\gamma J^s_xf(x,y)\|_{L^q_yL^p_x}.
\]
\end{cor}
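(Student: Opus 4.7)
The plan is to deduce this directly from the preceding non-homogeneous mixed-norm theorem by making the concrete choice $v(x)=\langle x\rangle^{\gamma p}$ and $w(x)=\langle x\rangle^{\gamma q}$, and then verifying each of its three hypotheses — the $A_{p,q}^{s}$ condition for the pair $(v,w)$, the $A_\infty$ condition for $v^{-p'/p}$ and $w$, and the integrability $\int w^{-p/(q-p)}\,dx<\infty$ — under the standing assumptions $2q/(q+1)<p<q$, $s=d(1/p-1/q)$ and $\gamma\in\,]s,d/p'[$. Once all three are in place, the conclusion is immediate.

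For the $A_{p,q}^s$ condition I would invoke Lemma \ref{lemma3.2} with $a=\gamma p$, $b=\gamma q$ and $\alpha=s=d/p-d/q$. The five inequalities listed there reduce to simple checks: the equality $\alpha-d/p+d/q=0$ matches $a/p-b/q=\gamma-\gamma=0$ (so ``$\leq$'' holds trivially); $\alpha\leq d$ follows from $p<q$; the bound $b/q\leq d/q'-\alpha$ becomes $\gamma\leq d/q'-s=d/p'$, which is our hypothesis; the bound $a/p\geq \alpha-d/p$ reduces to $\gamma\geq s-d/p$, clearly implied by $\gamma>s>0$; and $a<d(p-1)$ is exactly $\gamma<d/p'$. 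For $A_\infty$, the characterization stated at the start of Section \ref{particular}'s second subsection shows $\langle x\rangle^{\gamma q}\in A_q$ iff $-d/q<\gamma<d/q'$, which is automatic since $0<s<\gamma<d/p'<d/q'$ (using $p<q$); likewise $\langle x\rangle^{-\gamma p'}\in A_\infty$ iff $-\gamma p'>-d$, i.e.\ $\gamma<d/p'$.

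The integrability condition becomes $\int\langle x\rangle^{-\gamma q p/(q-p)}\,dx<\infty$, which holds iff $\gamma q p/(q-p)>d$; a short algebraic manipulation rewrites this as $\gamma>d(q-p)/(qp)=d(1/p-1/q)=s$, which is precisely the lower bound on $\gamma$ in the hypothesis. With all three conditions checked, the preceding theorem applies and yields
\[
\|f(x,y)\|_{L^p_xL^q_y}\lesssim\|v(x)^{1/p}J^s_xf(x,y)\|_{L^q_yL^p_x}=\|\langle x\rangle^\gamma J^s_xf(x,y)\|_{L^q_yL^p_x},
\]
as desired.

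There is essentially no substantive obstacle here — the entire argument is bookkeeping. The one point that deserves a comment is how tightly the parameter ranges are coupled: the lower bound $\gamma>s$ is forced by the integrability condition, while the upper bound $\gamma<d/p'$ is forced simultaneously by the $A_\infty$ condition on $v^{-p'/p}$ and by the $A_{p,q}^s$ condition, so the window $]s,d/p'[$ is non-empty precisely when $s<d/p'$, i.e.\ $d(1/p-1/q)<d/p'$, i.e.\ $1/q>1/p-1/p'=(2-p)/p$, which after rearrangement is exactly $p>2q/(q+1)$ — explaining the constraint in the hypothesis, as already indicated in the homogeneous-derivative remark preceding the statement.
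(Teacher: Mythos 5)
Your proof is correct and follows exactly the same route the paper intends: apply the preceding non-homogeneous mixed-norm theorem with $v(x)=\langle x\rangle^{\gamma p}$, $w(x)=\langle x\rangle^{\gamma q}$, then verify the $A^s_{p,q}$, $A_\infty$, and integrability conditions via Lemma \ref{lemma3.2} and the $A_p$ characterization of Japanese-bracket weights. All of your parameter checks and the concluding remark explaining the role of $p>2q/(q+1)$ are accurate.
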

\section{Acknowledgments}
Rodrigo Duarte would like to thank Javier Orts, Arber Selimi, Nino Scalbi and Frederico Toulson for the many helpful discussions. Jorge Drumond Silva would like to thank Felipe Linares and Ademir Pastor with whom technical discussions for nonlinear dispersive PDEs led to the subject of this work.

\appendix
\section{}\label{appendixA}
\begin{proof}[Proof of lemma \ref{extralemma1}]
Given our assumptions on \(f\) we may integrate \(\widehat{\partial^\gamma f}\) by parts sufficiently many times to get \(\widehat{\partial^\gamma f}(\xi)=(2\pi i \xi)^\gamma \hat{f}(\xi), |\gamma|\leq N\). Now fix \(\gamma\) with \(|\gamma|=N, \xi\neq 0\) and pick \(j\) such that \(|\xi_j|=\max_{1\leq k\leq d}|\xi_k|\). Then, by a change of variables we see that 
\[
\int_{\R^d}e^{-2\pi i \xi\cdot x}\partial^\gamma f(x)dx=-\int_{\R^d}e^{-2\pi i\xi\cdot x}\partial^\gamma f\left(x-\frac{e_j}{2\xi_j}\right)dx. 
\]
This shows that 
\[
\begin{split}
|\widehat{\partial^\gamma f}(\xi)|&=\left|\frac{1}{2}\int_{\R^d}e^{-2\pi i \xi\cdot x}\left[\partial^\gamma f(x)-\partial^\gamma f\left(x-\frac{e_j}{2\xi_j}\right)\right]dx\right|\\
&\leq \frac{C_\gamma}{2}\int_{\R^d}\frac{(2|\xi_j|)^{-\delta}}{(1+\min\{|x|,\left|x-\frac{e_j}{2\xi_j}\right|\})^{d+1}}dx.
\end{split}
\]
Note that by splitting the integral into the regions \(|x|\leq |x-e_j/(2\xi_j)|\) and \(|x|>|x-e_j/(2\xi_j)|\), and changing variables in the second region we obtain
\[
\int_{\R^d}\frac{1}{(1+\min\{|x|,\left|x-\frac{e_j}{2\xi_j}\right|\})^{d+1}}\leq 2\int_{\R^d}\frac{1}{(1+|x|)^{d+1}}dx<\infty. 
\]
Note also that \(|\xi|\leq \sqrt{d}|\xi_j|\), so \(|\xi|^\delta |\widehat{\partial^\gamma f}(\xi)|\lesssim_{\gamma, \delta,d} 1\). Now, since the function \(\xi\in S^{d-1}\mapsto \sum_{|\beta|=N}|\xi^\beta|\) has a positive minimum it follows that
\[
|\xi|^N\lesssim_{d,N}\sum_{|\beta|=N}|\xi^\beta|.
\]
Therefore,
\[
|\xi|^{N+\delta}|\hat{f}(\xi)|\lesssim_{d,N}|\xi|^\delta \sum_{|\beta|=N}|\xi^\beta \hat{f}(\xi)|\lesssim_{d,N} \sum_{|\beta|=N}|\xi|^\delta|\widehat{\partial^\beta f}(\xi)|\lesssim_{\delta,d,N}1.
\]
\end{proof}
\begin{lemma}\label{lemmaA.2}
If \(0<\delta\leq 1\) and \(\gamma\) is any multi-index, then the function \(\xi^\gamma|\xi|^{\delta-|\gamma|}\) is \(\delta\)-Hölder continuous.
\end{lemma}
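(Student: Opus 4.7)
The function $f(\xi)=\xi^\gamma|\xi|^{\delta-|\gamma|}$ is homogeneous of degree $\delta$ and smooth away from the origin, so the plan is the standard ``split into near-origin / far-from-origin'' argument, using the mean value theorem in one regime and the size bound $|f(\xi)|\leq|\xi|^\delta$ in the other.

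First I would compute $\nabla f$ directly. Writing
\[
\partial_j f(\xi)=\gamma_j\,\xi^{\gamma-e_j}|\xi|^{\delta-|\gamma|}+(\delta-|\gamma|)\,\xi^\gamma\xi_j|\xi|^{\delta-|\gamma|-2},
\]
and using $|\xi^\alpha|\leq|\xi|^{|\alpha|}$ in each term, I obtain the pointwise bound
\[
|\nabla f(\xi)|\leq C_{\gamma,\delta}\,|\xi|^{\delta-1},\qquad \xi\neq 0.
\]

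Next, given $\xi,\eta\in\R^d\setminus\{0\}$, I would consider two cases. If $\min(|\xi|,|\eta|)\geq 2|\xi-\eta|$, then for every point $\zeta$ on the segment joining $\xi$ and $\eta$ one has $|\zeta|\geq\min(|\xi|,|\eta|)-|\xi-\eta|\geq|\xi-\eta|$; since $\delta-1\leq 0$, the gradient bound gives $|\nabla f(\zeta)|\leq C|\xi-\eta|^{\delta-1}$, and the mean value theorem yields
\[
|f(\xi)-f(\eta)|\leq C|\xi-\eta|^{\delta-1}\cdot|\xi-\eta|=C|\xi-\eta|^\delta.
\]
If instead $\min(|\xi|,|\eta|)<2|\xi-\eta|$, say $|\xi|<2|\xi-\eta|$, then $|\eta|<3|\xi-\eta|$ by the triangle inequality, and the bound $|f(\zeta)|\leq|\zeta|^\delta$ gives
\[
|f(\xi)-f(\eta)|\leq|f(\xi)|+|f(\eta)|\leq(2^\delta+3^\delta)|\xi-\eta|^\delta,
\]
finishing the estimate. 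The case $\gamma=0$ (where the first term of $\nabla f$ is absent) is covered by the same argument, and reduces to the classical fact that $|\xi|^\delta$ is $\delta$-H\"older.

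There is really no serious obstacle here: the only thing to watch is that $\delta-1$ may be negative, which is precisely why the lower bound $|\zeta|\geq|\xi-\eta|$ on the segment is needed in the first case, and why one must single out the regime where $\xi$ and $\eta$ lie within a distance comparable to $|\xi-\eta|$ from the origin and handle it directly by size rather than by differentiation.
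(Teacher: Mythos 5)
Your proof is correct, but it takes a genuinely different route from the paper. You exploit that $f$ is homogeneous of degree $\delta$ and $C^1$ away from the origin: you prove the gradient bound $|\nabla f(\xi)|\lesssim_{\gamma,\delta}|\xi|^{\delta-1}$ and split according to whether $\min(|\xi|,|\eta|)\geq 2|\xi-\eta|$ (far from the origin: mean value theorem along the segment, which avoids $0$ by the lower bound $|\zeta|\geq|\xi-\eta|$) or not (near the origin: direct size bound $|f(\zeta)|\leq|\zeta|^\delta$). The paper instead writes $f(\xi)=\tilde\xi^\gamma|\xi|^\delta$ with $\tilde\xi=\xi/|\xi|$ and splits the difference $f(\xi)-f(\eta)$ algebraically into a radial term, controlled by the elementary subadditivity $\bigl||\xi|^\delta-|\eta|^\delta\bigr|\leq|\xi-\eta|^\delta$, and an angular term, controlled by Lipschitz continuity of $x\mapsto x^\gamma$ on $\overline{B}_1$ together with the geometric inequality $|\xi|\,|\tilde\xi-\tilde\eta|\leq|\xi-\eta|$ for $|\xi|\leq|\eta|$. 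Your approach is more mechanical and more general — the same two-regime argument applies verbatim to any function that is $C^1$ on $\R^d\setminus\{0\}$ and homogeneous of degree $\delta\in(0,1]$ — while the paper's argument is more ad hoc but avoids differentiating $f$ at all. One cosmetic point: you restrict to $\xi,\eta\neq 0$, but you should explicitly note that the cases where $\xi=0$ or $\eta=0$ are immediate from $|f(\zeta)|\leq|\zeta|^\delta$ (indeed your Case~2 already covers $\xi=0\neq\eta$, since then $|\xi|=0<2|\xi-\eta|$).
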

\begin{proof}
First we consider the case \(\gamma=0\). Since \(0<\delta\leq 1\), then \(|\xi+\eta|^\delta\leq (|\xi|+|\eta|)^\delta\leq |\xi|^\delta+|\eta|^\delta, \forall \xi,\eta\). So, we have the two inequalities
\begin{align*}
    |\xi|^\delta&\leq |\xi-\eta|^\delta+|\eta|^\delta\\
    |\eta|^\delta&\leq |\eta-\xi|^\delta+|\xi|^\delta,
\end{align*}
so \(||\xi|^\delta-|\eta|^\delta|\leq |\xi-\eta|^\delta\). 

Now we consider the general case. Put \(f(\xi)=\xi^\gamma |\xi|^{\delta-|\gamma|}\). If \(\xi=0\) or \(\eta=0\) it is clear that \(|f(\xi)-f(\eta)|\leq |\xi-\eta|^\delta\). So, now suppose \(\xi,\eta\neq 0\). Then we may write
\[
f(\xi)-f(\eta)=\tilde{\eta}^\gamma[|\xi|^\delta-|\eta|^\delta]-|\xi|^\delta[\tilde{\eta}^\gamma-\tilde{\xi}^\gamma], 
\]
where \(\tilde{\xi}=\xi/|\xi|\). Interchanging \(\xi\) and \(\eta\) we obtain
\[
|f(\xi)-f(\eta)|\leq |\xi-\eta|^\delta+\min\{|\xi|,|\eta|\}^\delta|\tilde{\xi}^\gamma-\tilde{\eta}^\gamma|.
\]
Now consider the function \(g(x)=x^\gamma, |x|\leq 1\). This is a smooth function with gradient equal to \(\nabla g(x)=(\gamma_1x^{\gamma-e_1},\dots, \gamma_dx^{\gamma-e_d})\), which is in \(L^\infty(\overline{B}_1)\). By the mean value theorem it follows that \(|g(x)-g(y)|\lesssim_\gamma |x-y|\). In particular,
\[
\frac{|g(x)-g(y)|}{|x-y|^\delta}\lesssim_\gamma |x-y|^{1-\delta}\lesssim_{\gamma,\delta}1, \forall x,y\in\overline{B}_1, x\neq y.
\]
This implies that \(|\tilde{\xi}^\gamma-\tilde{\eta}^\gamma|\lesssim_{\gamma,\delta}|\tilde{\xi}-\tilde{\eta}|^\delta\). This way we obtain the estimate
\[
|f(\xi)-f(\eta)|\lesssim_{\gamma,\delta}|\xi-\eta|^\delta+\min\{|\xi|,|\eta|\}^\delta|\tilde{\xi}-\tilde{\eta}|^\delta. 
\]
Without loss of generality we may assume that \(|\xi|\leq |\eta|\). Put \(a=|\xi|\tilde{\eta}, b=\xi\) and \(\alpha=|\eta|/|\xi|\). Note that \(|a|=|b|, \alpha\geq 1\), and therefore
\[
|\alpha a-b|^2-|a-b|^2=(\alpha-1)[(\alpha+1)|a|^2-2a\cdot b]\geq 0
\]
because \(2a\cdot b\leq 2|a||b|\leq (\alpha+1)|a|^2\). This shows that 
\[
|\xi||\tilde{\xi}-\tilde{\eta}|=|a-b|\leq |\alpha a-b|=|\eta-\xi|.
\]
Thus,
\[
|f(\xi)-f(\eta)|\lesssim_{\gamma,\delta}|\xi-\eta|^\delta.
\]
\end{proof}
\begin{lemma}\label{lemmaA.3}
Consider \(0<\delta\leq 1, m\in \N_0, \gamma\) a multi-index and \(g\in \mcal{S}(\R^d)\). Then the function \(f(\xi)=\xi^\gamma|\xi|^{\delta+m-|\gamma|}g(\xi)\) satisfies the estimate
\[
|f(\xi)-f(\eta)|\leq C\frac{|\xi-\eta|^\delta}{(1+\min\{|\xi|,|\eta|\})^{d+1}}.
\]
\end{lemma}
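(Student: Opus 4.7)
The plan is to write $f = hg$ with $h(\xi) = \xi^\gamma |\xi|^{m+\delta-|\gamma|}$ (homogeneous of degree $m+\delta > 0$) and combine Hölder-type regularity of $h$ with Schwartz decay of $g$. By symmetry I may assume $|\xi| \leq |\eta|$, so the claim reduces to
\[
|f(\xi) - f(\eta)| \leq C\, |\xi - \eta|^\delta (1 + |\xi|)^{-d-1}.
\]
I then run a case analysis in three regimes based on the relative sizes of $|\xi-\eta|$, $|\xi|$, $|\eta|$.

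\emph{Regime I:} $|\xi - \eta| \geq 1$. Then $|\xi - \eta|^\delta \geq 1$, so it suffices to control $|f(\xi)|$ and $|f(\eta)|$ separately by $(1 + |\xi|)^{-d-1}$. Using $|f(\xi)| = |\xi|^{m+\delta} |g(\xi)| \lesssim_N |\xi|^{m+\delta}(1+|\xi|)^{-N}$ for any $N$ (Schwartz decay of $g$), choosing $N$ large enough and using $|\eta| \geq |\xi|$ finishes this case. \emph{Regime II:} $|\xi - \eta| < 1$ and $|\xi| \leq 2|\xi - \eta|$. Then $|\xi|, |\eta| \lesssim 1$, so $(1 + |\xi|)^{d+1}$ is bounded, and the triangle inequality together with $|f(\xi)| \lesssim |\xi|^{m+\delta} \leq |\xi|^\delta \lesssim |\xi - \eta|^\delta$ (and the analogous bound for $\eta$, using $|\eta| \leq 3|\xi - \eta|$) yields the claim. \emph{Regime III (main case):} $|\xi - \eta| < 1$ and $|\xi| > 2|\xi - \eta|$. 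Then $|\eta| \geq |\xi|/2$ and every point on the segment $[\xi,\eta]$ has norm at least $|\xi|/2$. Split
\[
f(\xi) - f(\eta) = h(\xi)\bigl(g(\xi) - g(\eta)\bigr) + g(\eta)\bigl(h(\xi) - h(\eta)\bigr).
\]
For the first piece, the mean value theorem and Schwartz decay of $\nabla g$ give $|g(\xi) - g(\eta)| \lesssim_N |\xi - \eta|(1+|\xi|)^{-N}$; combined with $|h(\xi)| \leq (1+|\xi|)^{m+\delta}$ and the inequality $|\xi - \eta| \leq |\xi - \eta|^\delta$ (valid since $|\xi - \eta| < 1$), this gives the desired bound with $N$ large. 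For the second piece, the direct computation $|\nabla h(x)| \lesssim |x|^{m+\delta - 1}$ combined with MVT on the segment yields
\[
|h(\xi) - h(\eta)| \lesssim |\xi - \eta|\,|\xi|^{m+\delta - 1} \lesssim |\xi - \eta|^\delta\, (1+|\xi|)^m,
\]
where I convert from Lipschitz to Hölder using $|\xi - \eta|^{1-\delta} \leq |\xi|^{1-\delta}$ (valid because $|\xi - \eta| \leq |\xi|$); when $m = 0$ and $\delta < 1$ one may alternatively invoke Lemma \ref{lemmaA.2} directly. Multiplying by $|g(\eta)| \lesssim_N (1+|\xi|)^{-N}$ (using $|\eta| \geq |\xi|/2$ implies $(1+|\eta|) \geq (1+|\xi|)/2$) and choosing $N$ sufficiently large concludes Regime III.

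The only subtle point is keeping track of the factor $(1+\min\{|\xi|,|\eta|\})^{-d-1}$ in both the small-$|\xi|$ regime, where $h$ has limited regularity at the origin but the denominator is trivially bounded, and the large-$|\xi|$ regime, where $h$ is smooth and the required decay is supplied entirely by the Schwartz decay of $g$ and its derivatives. The split $|\xi| \gtrless 2|\xi - \eta|$ within $|\xi - \eta| < 1$ precisely separates these two behaviors and is what makes the Lipschitz-to-Hölder conversion for $h$ work without losses.
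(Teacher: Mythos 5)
Your proof is correct, and it takes a genuinely different organizational route from the paper's while sharing the same high-level blueprint of writing $f=hg$ with $h(\xi)=\xi^\gamma|\xi|^{m+\delta-|\gamma|}$, using the bilinear decomposition of $f(\xi)-f(\eta)$, and case-splitting on the size of $|\xi-\eta|$. The main structural difference is in the inner case split within $|\xi-\eta|<1$: the paper uses the absolute threshold $|\xi|\gtrless 2$, while you use the relative threshold $|\xi|\gtrless 2|\xi-\eta|$, which is the precise geometric condition guaranteeing the segment $[\xi,\eta]$ avoids the origin. This lets you treat the $h$-difference uniformly in $m\in\N_0$ via the local bound $|\nabla h|\lesssim|x|^{m+\delta-1}$ on $[\xi,\eta]$ followed by the Lipschitz-to-H\"older conversion $|\xi-\eta|\le|\xi-\eta|^\delta|\xi|^{1-\delta}$; the paper instead splits into $m=0$ (invoking the global H\"older estimate of Lemma~\ref{lemmaA.2}) and $m\geq1$ (where $h\in C^1$ globally and the MVT integral $\int_0^1((1-t)|\eta|+t|\xi|)^{\delta+m-1}\,dt$ stays bounded even when the segment crosses the origin). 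Your version thus avoids any appeal to Lemma~\ref{lemmaA.2} in the main case and is more uniform; the paper's version avoids the local geometry but needs the separate $m=0$ branch. One cosmetic remark: in Regime II you write $|\xi|^{m+\delta}\le|\xi|^\delta$, which literally requires $|\xi|\le 1$, whereas in that regime you only have $|\xi|<2$; it should read $|\xi|^{m+\delta}\lesssim_m|\xi|^\delta$, which is what you mean and does not affect the argument.
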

\begin{proof}
We consider first the case \(m=0\). Using lemma \ref{lemmaA.2} we have that
\[
\begin{split}
|f(\xi)-f(\eta)|&\leq |g(\xi)||\xi^\gamma |\xi|^{\delta-|\gamma|}-\eta^\gamma|\eta|^{\delta-|\gamma|}|+|\eta^\gamma |\eta|^{\delta-|\gamma|}||g(\xi)-g(\eta)|\\
&\lesssim \langle \xi\rangle^{-M}|\xi-\eta|^\delta+|\eta|^\delta|g(\xi)-g(\eta)|, \text{ for any }M>0.
\end{split}
\]
Interchanging \(\xi\) and \(\eta\) we obtain the estimate
\[
|f(\xi)-f(\eta)|\lesssim (\max\{\langle \xi\rangle,\langle \eta\rangle\})^{-M}|\xi-\eta|^\delta+\min\{|\xi|,|\eta|\}^\delta|g(\xi)-g(\eta)|.
\]
Without loss of generality we may assume that \(|\xi|\leq |\eta|\). From 
\[
g(\xi)-g(\eta)=\int_0^1\nabla g(t\xi+(1-t)\eta)\cdot (\xi-\eta)dt,
\]
we get the estimate
\[
|\xi|^\delta |g(\xi)-g(\eta)|\leq |\xi|^\delta |\xi-\eta|\int_0^1|\nabla g(t\xi+(1-t)\eta)|dt\lesssim |\xi|^\delta |\xi-\eta|\int_0^1\langle t\xi+(1-t)\eta\rangle^{-M}dt.
\]
Now we assume that \(|\xi-\eta|<1\) and \(|\xi|\geq 2\). In this case, since \(\eta+t(\xi-\eta)\in B_1(\xi)\), we have that 
\[
\frac{1}{2}|\xi|\leq |\eta+t(\xi-\eta)|.
\]
Therefore we obtain
\[
|\xi|^\delta |g(\xi)-g(\eta)|\lesssim |\xi|^\delta|\xi-\eta|\langle \xi\rangle^{-M}\lesssim \frac{|\xi-\eta|^\delta}{\langle \xi\rangle^{M-\delta}}.
\]
Now suppose that \(|\xi-\eta|<1\) and \(|\xi|<2\). Then,
\[
|\xi|^\delta |g(\xi)-g(\eta)|\lesssim \|\nabla g\|_{L^\infty}|\xi|^\delta |\xi-\eta|\lesssim \frac{|\xi-\eta|^\delta}{(1+|\xi|)^{d+1}}.
\]
Finally let's suppose instead that \(|\xi-\eta|\geq 1\). Then we have that
\[
|\xi|^\delta|g(\xi)-g(\eta)|\lesssim |\xi|^\delta\langle \xi\rangle^{-M}+|\xi|^\delta\langle \eta\rangle^{-M}\lesssim |\xi|^\delta\langle \xi\rangle^{-M}\lesssim \frac{|\xi-\eta|^\delta}{\langle \xi\rangle^{M-\delta}}.
\]
Putting everything together and choosing \(M=d+1+\delta\) we find that 
\[
|f(\xi)-f(\eta)|\lesssim \frac{|\xi-\eta|^\delta}{(1+\min\{|\xi|,|\eta|\})^{d+1}}.
\]
Now suppose that \(m\geq 1\). In this case, \(f\) is at least \(C^1\). Again we use the estimate
\[
|f(\xi)-f(\eta)|\lesssim (\max\{\langle \xi\rangle, \langle \eta\rangle\})^{-M}|\xi^\gamma|\xi|^{\delta+m-|\gamma|}-\eta^\gamma|\eta|^{\delta+m-|\gamma|}|+\min\{|\xi|,|\eta|\}^{\delta+m}|g(\xi)-g(\eta)|.
\]
Again, without loss of generality we may assume that \(|\xi|\leq |\eta|\). The second term can be estimated in exactly the same way as the argument given in the case \(m=0\) using \(M=d+1+m+\delta\). As for the first term, we consider separately the cases \(|\xi-\eta|<1\) and \(|\xi-\eta|\geq 1\). When \(|\xi-\eta|\geq 1\) and choosing \(M=d+1+\delta+m\) we have that 
\[
\langle \eta\rangle^{-M}|\xi^\gamma|\xi|^{\delta+m-|\gamma|}-\eta^\gamma|\eta|^{\delta+m-|\gamma|}|\leq\langle \eta\rangle^{-M}(|\xi|^{\delta+m}+|\eta|^{\delta+m}|\lesssim \langle \xi\rangle^{-d-1}|\xi-\eta|^\delta.
\]
Now suppose that \(|\xi-\eta|<1\) and put \(h(\xi)=\xi^\gamma|\xi|^{\delta+m-|\gamma|}\). The function \(h\) is at least \(C^1\) and we have the estimate \(|\nabla h(\xi)|\lesssim |\xi|^{\delta+m-1}\). So, it follows that
\[
\begin{split}
|h(\xi)-h(\eta)|&\lesssim \int_0^1((1-t)|\eta|+t|\xi|)^{\delta+m-1}dt|\xi-\eta|\lesssim |\eta|^{\delta+m-1}|\xi-\eta|. 
\end{split}
\]
Therefore we see that
\[
\langle \eta\rangle^{-M}|h(\xi)-h(\eta)|\lesssim \langle \eta\rangle^{-M+\delta+m-1}|\xi-\eta|^\delta \lesssim \langle \xi\rangle^{-d-1}|\xi-\eta|^\delta.
\]
\end{proof}

\clearpage

\end{document}